\theoremstyle{plain}
\newtheorem{theorem}{Theorem}
\newtheorem{lemma}[theorem]{Lemma}
\newtheorem{proposition}[theorem]{Proposition}
\newtheorem{definition}{Definition}
\theoremstyle{remark}
\newtheorem{remark}[theorem]{Remark}
\newcommand{\R}{\mathbb R}
\newcommand{\N}{\mathbb N}
\newcommand{\T}{\mathbb T}
\newcommand{\fL}{\mathfrak L}
\newcommand{\cM}{\mathcal M}
\newcommand{\cL}{\mathcal L}
\newcommand{\cO}{\mathcal O}
\DeclareMathOperator{\Hess}{Hess}
\DeclareMathOperator{\supp}{supp}
\DeclareMathOperator*{\Lip}{Lip}
\newcommand{\bu}{{\bf u}}
\newcommand{\bv}{{\bf v}}
\title[Controls, Optimal Transport, Neural Networks]{Control, Optimal Transport and  Neural Differential Equations in Supervised Learning}
\author[M.-N. Phung]{Minh-Nhat Phung}
\address{Department of Mathematics, Texas A\&M University, College Station, TX 77843, USA}
\email{pmnt1114@tamu.edu}
\author[M.-B. Tran]{Minh-Binh Tran}
\address{Department of Mathematics, Texas A\&M University, College Station, TX 77843, USA}
\email{minhbinh@tamu.edu} 
\thanks{M.-N. Phung and M.-B. T are  funded in part by  a   Humboldt Fellowship,   NSF CAREER DMS-2303146, and NSF Grants DMS-2204795, DMS-2305523, DMS-2306379.}
\begin{document}

	\date{\today}

	\begin{abstract}
		We study the fundamental computational problem of approximating optimal transport (OT) equations using neural differential equations (Neural ODEs). More specifically, we develop a novel framework for approximating unbalanced optimal transport (UOT) in the continuum using Neural ODEs. By generalizing a discrete UOT problem with Pearson divergence, we constructively design vector fields for Neural ODEs that converge to the true UOT dynamics, thereby advancing the mathematical foundations of computational transport and machine learning. To this end, we design a numerical scheme inspired by the Sinkhorn algorithm to solve the corresponding minimization problem and rigorously prove its convergence, providing explicit error estimates.  From the obtained numerical solutions, we derive vector fields defining the transport dynamics and construct the corresponding transport equation.
		Finally, from the numerically obtained transport equation, we construct a neural differential equation whose flow converges to the true transport dynamics in an appropriate limiting regime.

	\end{abstract}
	
	\maketitle

	\tableofcontents

	\section{Introduction}
	
	In the last decade, deep neural networks have been widely used due to their flexibility in supervised learning. The continuum limits of deep neural networks have given rise to a new mathematical concept: Neural Ordinary Differential Equations (Neural ODEs) \cite{chen2018neural,haber2017stable,sander2021momentum,wang2019resnets,weinan2017proposal}. Neural ODEs in deep learning has proven highly effective across diverse applications. Examples include the use of adaptive ODE solvers~\cite{chen2018neural, dupont2019, queiruga2020}, symplectic and multigrid methods~\cite{celledoni2021, gunther2020}, and indirect training approaches based on the Pontryagin Maximum Principle~\cite{li2017, benning2019}. Neural ODEs have also been applied to generative modeling via normalizing flows~\cite{grathwohl2018, chen2019}. Importantly, the roots of continuous-time supervised learning date back to the 1980s: Hopfield’s model~\cite{hopfield1982} was formulated as a differential equation, while LeCun et al.~\cite{lecun1988} linked backpropagation to the adjoint method from optimal control theory. Further related work includes research on weight identification from data~\cite{albertini1993a} and on the controllability of continuous-time recurrent networks~\cite{sontag1997}.

	 An 
	 advantage of the Neural ODE approach is the ability to apply existing tools from control theory to understand the behavior of neural networks. Below, we list only a few examples. In \cite{tabuada2020universal}, differential geometric tools are used to prove the controllability of neural ODEs, while in \cite{agrachev2022control}, the controllability of neural ODEs on the group of diffeomorphisms is studied. Additionally, the training process of neural networks using optimal control has been explored in \cite{benning2019deep, jabir2019mean, li2018maximum}. In the series of works \cite{alcalde2024clustering, RUIZBALET202458, zuazua2024progress, ALVAREZLOPEZ2024106640}, the authors show that, for given initial and target states, the weights of a neural ODE can be selected to make the final state arbitrarily close to the target one in appropriately chosen distances.
	
	The formulation of optimal transportation was first introduced as Monge's problem. It was later revived in 1942 by Kantorovich \cite{translateKantorovich}, and is now commonly referred to as the Monge-Kantorovich problem. For many years, solutions to the primal and dual formulations of optimal transportation have been employed by various authors to study key properties of partial differential equations \cite{Brenier,Entropyineq,Otto31012001,Caffar96,Cann1995}. In recent years, optimal transportation has found significant applications in computer science and machine learning. In supervised learning, the Wasserstein distance has been used to define loss functions for classification tasks \cite{WlossML}, and to ensure fairness in decision-making systems \cite{fairness}. In generative artificial intelligence, optimal transportation has played a central role in the development of Wasserstein Generative Adversarial Networks (WGANs) \cite{WGAN} and Wasserstein Autoencoders (WAEs) \cite{WAE}.

	The connection between neural ODEs and optimal transport theory is highly important. From a computational perspective, training neural ODEs on large datasets has been very costly due to the need for the adaptive numerical ODE solver to refine its step size to very small values. The idea of approximating optimal transport equations using neural ODEs was introduced in \cite{finlay2020train}, leading to faster convergence and fewer solver discretizations, without a loss in performance. This leads to much more efficient algorithms for large-scale computational applications. Additionally, the connection between neural ODEs and optimal transport theory naturally arises in density estimation problems in machine learning, such as normalizing flows \cite{kobyzev2020normalizing}. The fundamental computational challenge of approximating optimal transport equations with neural ODEs has also been presented as an open problem in \cite{doi:10.1137/21M1411433}.

	In \cite{elamvazhuthi2022neural}, the authors show that neural ODEs can be used to approximate solutions of the continuity equation with uniformly bounded Lipschitz vector fields. More precisely, they demonstrate that for any such vector field, there exist admissible vector fields—constructed using neural network parameters—that converge to the original one. The important problem of approximating optimal transport equations using neural ODEs is also discussed in \cite{elamvazhuthi2022neural}. Their results imply that, given an optimal transport process for which the Benamou--Brenier formulation holds and whose associated vector field is uniformly bounded and Lipschitz, there exist admissible vector fields—constructed via neural network parameters—that converge to the explicit vector field. However, the result is not constructive in the sense that the admissible vector fields cannot be explicitly constructed, and the assumption of a uniformly bounded Lipschitz vector field is  restrictive.

	The goal of our work is to study the fundamental computational problem of approximating optimal transportation equations using neural ODEs in a constructive manner; that is, the neural ODEs can be computed explicitly. In our setting, the Benamou--Brenier formulation does not hold, and the Lipschitz continuity of the associated vector field is not assumed a priori.

	More precisely, our work focuses on the following aspects.

	\begin{itemize}
		\item[(i)] In  optimal transport theory, balanced optimal transport (BOT) typically requires the input and output measures to have the same total mass, whereas  unbalanced optimal transport (UOT) allows for differing total masses between source and target measures, thus  generalizing the balanced case. This relaxation introduces greater modeling flexibility and has become increasingly relevant in machine learning and artificial intelligence, where robustness, adaptability, and computational efficiency are crucial \cite{chapel2021unbalanced,fatras2021unbalanced,sejourne2022faster}. Motivated by these advantages, we concentrate on the more general setting of UOT problems, in which the Benamou-Brenier formulation is unavailable.

		\item[(ii)] 
		From a computational perspective, both balanced optimal transport (BOT) and unbalanced optimal transport (UOT) are notoriously expensive to compute due to their cubic complexity and the limitations of GPU parallelism in linear programming algorithms \cite{papadimitriou1998combinatorial}. To address this, the Sinkhorn algorithm \cite{sinkhorn1974diagonal} has been  introduced to the entropic-regularized optimal transport problems \cite{altschuler2017near,chizat2018,cuturi2013sinkhorn}. Although efficient, the Sinkhorn algorithm can suffer from optimization instabilities. To mitigate these issues and accelerate computations, low-rank approximations of the Sinkhorn iterations have been proposed \cite{scetbon2021low,altschuler2019massively}, albeit at the cost of reduced accuracy. Due to these limitations \cite{blondel2018,schmitzer2019stabilized}, alternative regularization techniques have been developed \cite{genevay2016stochastic,guminov2021combination,fukunaga2022block,su2024accelerating}.
		
		The UOT framework relaxes the marginal constraints of balanced optimal transport by replacing the Kantorovich formulation with penalty functions applied to the marginals, typically defined through divergence measures \cite{liero2016optimal}. While various divergences have been proposed in the literature-such as the \(\ell_2\) norm (see Blondel et al.~\cite{blondel2018}), the \(\ell_1\) norm (see Caffarelli and McCann~\cite{caffarelli2010}), and more general \(\ell_p\) norms (see Lee et al.~\cite{lee2019})- UOT with Kullback–Leibler (KL) divergence ~\cite{chizat2018} is more commonly used, as it has broader applications  \cite{balaji2020,le2021,schiebinger2019,yang2019}. Thanks to the advantages of KL divergence, \cite{Nguyen2022OnUO} investigates a Sinkhorn-inspired gradient-based algorithm for the discrete unbalanced optimal transport (UOT) problem between two measures of possibly different masses, each with at most 
		$n$ components, using KL divergence.

		We fully generalize the UOT problem considered in \cite{Nguyen2022OnUO}, which was formulated in a discrete setting, to the continuum case, thereby addressing a broader and more challenging class of problems.  In general, the continuum UOT framework generalizes the discrete one by allowing mass to be spread across a continuous space, providing a more general and flexible approach that can model a broader range of problems \cite{Villanibook,Villanibookold&new}.

		It is well known that generalizing a discrete problem defined on a finite lattice to the continuum is a highly challenging task. As a result, several novel ideas must be introduced in our work to address this challenge. Since our problem is in the continuum setting, we replace KL divergence with Pearson divergence. Although both are f-divergences used to measure differences between probability distributions, they differ significantly in behavior. KL divergence is asymmetric and highly sensitive to regions where the model underestimates the data, often leading to instability. In contrast, Pearson divergence, which is also asymmetric, is more robust and stable, particularly in the presence of outliers or heavy-tailed distributions.  In addition, the strong convexity of the exponential in the KL divergence is crucial for constructing the Sinkhorn-inspired algorithm in the discrete case \cite{Nguyen2022OnUO}. However, this property no longer holds in the continuum setting, making the Pearson divergence a more suitable choice. As a result, Pearson divergence is better suited for robust estimation, density ratio estimation, and certain generative modeling tasks \cite{baghel2024ir-mdpde,csiszar2004information,duchi2018dro,mcnamara2024smc,nowozin2016fgan,reed1999pearson,si2024plbd,sugiyama2012density,zimmermann2024visa}, and is therefore ideal for our continuum problem. Since our theory relies on the Pearson divergence, we must incorporate several advanced techniques from optimal transport theory (such as those in \cite{W21reg,Caffar96,LMS2018}) to ensure that the algorithm converges,  with explicit error estimates.

		\item[(iii)] Next, while the authors in \cite{elamvazhuthi2022neural} demonstrate that for any given uniformly bounded Lipschitz vector field, there exist admissible vector fields that converge to the original one, the construction of such admissible vector fields is not provided. In contrast, we show that our admissible vector fields not only exist but can also be explicitly and numerically constructed. To this end, we design a novel numerical scheme based on the Sinkhorn algorithm to compute the vector field. We further prove that the proposed scheme converges, with explicit error estimates, thereby ensuring that the vector field can be computed effectively in practice. Moreover, our theoretical framework does not require the Lipschitz continuity assumption on the vector fields.

	\end{itemize}
	
	\begin{remark} Note that the KL divergence is defined for both density functions and finite-dimensional vectors~\cite{Nguyen2022OnUO}.
	\end{remark}

	We aim to establish the mathematical foundations for approximating continuum UOT using Neural ODEs, thereby providing a rigorous framework for future computational studies. 	Below, we will elaborate further on our problem and the novelties of our work.

	A Neural ODE can be expressed as follows \cite{chen2018neural,elamvazhuthi2022neural}
	\begin{equation}\label{NeuralDE}
		\dot{x}(t) \ = \ 	\sum_{i=1}^NW_{i}(t){\bf\Sigma}(A_i(t) x(t)+b_i(t)),
	\end{equation}
	where ${\bf\Sigma} : \mathbb{R}^d \to \mathbb{R}^d$ is the so-called activation function, defined by
	\begin{align}\label{bfSigma}
	{\bf\Sigma}(y_1, \dots, y_d) = \big(\sigma(y_1), \dots, \sigma(y_d)\big),
\end{align}
	for a fixed function $\sigma : \mathbb{R} \to \mathbb{R}$, and where $A_i, W_i \in \mathbb{R}^{d \times d}$ and $b_i \in \mathbb{R}^d$.

		Writing the continuity equation associated with \eqref{NeuralDE}, we obtain
	\begin{equation}\label{ContinuityEq}
		\partial_t \mathscr M_t + \nabla_x \cdot \left( \sum_{i=1}^N W_i {\bf\Sigma}(A_i x + b_i) \mathscr M_t \right) = 0,
	\end{equation}
	where $\mathscr M_t(x) = \delta_{x = x(t)}$. 
	
	The following problem has been extensively studied in \cite{ALVAREZLOPEZ2024106640,doi:10.1137/21M1411433,hernandez2024deep,RUIZBALET202458,zuazua2022control,zuazua2024progress}.

	{\bf Control Problem (A):} {\it Given $T > 0$, a parameter $\sigma$, and density functions $f$ and $g$, construct $\big(W_i\big)_{i=1}^N$, $\big(A_i\big)_{i=1}^N$, and $\big(b_i\big)_{i=1}^N$ such that the solution of the  continuity equation \eqref{ContinuityEq}, with initial condition $\mathscr M_0 = f$, satisfies $\mathscr M_T = g$.		
	}
	
	Control Problem (A) aims to transport $f$ to $g$ via a standard transport process. In practical applications, however, one may seek to optimally transport $f$ to $g$. In real world situations, since the two density functions $f$ and $g$ may have different masses, the optimal transport process then becomes an unbalanced optimal transport (UOT) problem. Given a UOT problem with explicit transport cost and marginal cost, this process is typically associated with an unbalanced continuity equation,  
\begin{equation}
	\label{OTEq}
	\partial_t \mu_t + \nabla_x \cdot \left( \Xi_t \mu_t \right) = \rho_t \mu_t.
\end{equation}

Control Problem (A) can then be generalized to find $\big(W_i\big)_{i=1}^N$, $\big(A_i\big)_{i=1}^N$, and $\big(b_i\big)_{i=1}^N$, such that the dynamics of the inhomogeneous transport equation  

\begin{equation}
	\label{ContinuityInhomo}
	\partial_t \widetilde{\mu}_t + \nabla_x \cdot \left( \sum_{i=1}^N W_i {\bf\Sigma}(A_i x + b_i) \widetilde{\mu}_t \right) = \rho_t \widetilde{\mu}_t,
\end{equation}
match the true dynamics of \eqref{OTEq}. 

However, in applications, the process of optimally transporting $f$ to $g$ is defined by minimizing a total transport cost. The minimization problem has an equivalent dual form. In general, only the transport cost and marginal cost are given; the minimum total cost, the dual form, and the coefficients $\Xi_t$ and $\rho_t$ in \eqref{OTEq} are not known. To determine these coefficients, one must compute them numerically. As discussed in (ii) above, this problem is quite challenging, and it requires the design of a suitable Sinkhorn-inspired algorithm for the dual problem to numerically obtain $\Xi_t$ and $\rho_t$.

We now formulate a generalization of Control Problem (A) as follows.

	\bigskip
	
	\noindent
	\textbf{Control and Optimal Transport Problem (B):} \textit{Given $T > 0$, a parameter $\sigma$, and density functions $f$ and $g$, consider an unbalanced optimal transport problem whose associated transport dynamics are governed by \eqref{OTEq}. Our aim is to numerically construct parameters $\big(W_i\big)_{i=1}^N$, $\big(A_i\big)_{i=1}^N$, and $\big(b_i\big)_{i=1}^N$ such that the solution to the neural transport equation \eqref{ContinuityInhomo} evolves from $\widetilde{\mu}_0(x)$ to $\widetilde{\mu}_T(x)$, where $\widetilde{\mu}_0$ and $\widetilde{\mu}_T$ converge, in a suitable limiting sense, to measures with densities $f$ and $g$, respectively. Our goal is to ensure that, in this regime, the flows generated by the neural transport equation \eqref{ContinuityInhomo} converge to the true optimal solution of the unbalanced transport equation \eqref{OTEq}, by the following steps:
		\begin{itemize}
			\item[(B.1)] Starting from the total transport cost of the UOT process, derive its dual form and obtain expressions for vector fields $\Xi_t$ and $\rho_t$ in \eqref{OTEq}.
			\item[(B.2)] 	Design a Sinkhorn-inspired gradient-based algorithm for the dual problem that can numerically compute $\Xi_t$ and $\rho_t$ in \eqref{OTEq}.
			\item[(B.3)] Numerically construct parameters $\big(W_i\big)_{i=1}^N$, $\big(A_i\big)_{i=1}^N$, and $\big(b_i\big)_{i=1}^N$ such that the solutions of \eqref{ContinuityInhomo} converge, in an appropriate sense, to those of \eqref{OTEq}.
		\end{itemize}
	}
	
	\begin{remark} Note that $\Xi$ does not need to be Lipschitz, while $\sum_{i=1}^N W_i {\bf\Sigma}(A_i x + b_i)$ is Lipschitz. However, this does not imply that \eqref{ContinuityInhomo} cannot converge to the true optimal solution of the unbalanced transport equation \eqref{OTEq} in an appropriate limit. 
\end{remark}

	Below, we outline our approach to the Control and Optimal Transport Problem (B), with a focus on establishing the mathematical foundations for approximating UOT. Our framework provides rigorous convergence guarantees, paving the way for future numerical investigations.
	
	\begin{itemize}
		\item First, we associate equations \eqref{NeuralDE}–\eqref{ContinuityInhomo} with an unbalanced optimal transport (UOT) problem, in which the Benamou–Brenier formulation is not available. Our UOT problem is a continuum version of the discrete setting considered in \cite{Nguyen2022OnUO}, where the marginal constraints of balanced optimal transport are relaxed using the Kullback–Leibler (KL) divergence.
		
		We formulate our UOT problem as an optimization task aimed at minimizing the total transport cost (see Subsection \ref{Sub:Minimi}).
		
		Next, we present the dual formulation and demonstrate the existence of a solution (see Subsection \ref{Subsec:Existence}). However, since our goal is to construct a solution that can be used to build neural networks, we design a numerical scheme that explicitly computes the solution, leveraging the Sinkhorn algorithm \cite{cuturi2013sinkhorn} for the setting with $L^2$ penalty and Pearson divergence (see Subsection \ref{AL}).
		
		More precisely, our algorithm is inspired by the approach in \cite{Nguyen2022OnUO}. However, instead of employing the KL divergence as in \cite{Nguyen2022OnUO}, we utilize the Pearson divergence. The proposed Sinkhorn-inspired algorithm not only solves the UOT minimization problem but also provides explicit error estimates.
		
		\item Second, based on the solution of the minimization problem obtained from our numerical algorithm, we construct the convex solution of the associated Monge-Amp\`{e}re equation. Finally, approximated versions of the transport equation \eqref{OTEq} are derived constructively (see Subsection \ref{Subsec:Transport}).
		
		\item Third, we construct our admissible vector fields-built using neural network parameters-that converge to the vector field of the approximated transport equation, which is numerically obtained above. As a result, we can explicitly construct $\big(W_{i}\big)_{i=1}^N$, $\big(A_i\big)_{i=1}^N$, and $\big(b_i\big)_{i=1}^N$, such that the neural transport equation \eqref{ContinuityInhomo} converges to the actual optimal solution of dynamic optimal transport \eqref{OTEq} in certain limits (see Subsection \ref{Subsec:Main}).  Our theoretical framework accommodates a wide range of
		activation functions, including ReLU, RePU, sigmoid, hyperbolic tangent, softplus,
		arctangent, ELU, GELU, SiLU, and Mish.
		\item Finally, while numerical experiments are beyond the scope of this work, the provided error estimates ensure the theoretical robustness of the Sinkhorn-inspired algorithm, paving the way for future computational implementations.
		
	\end{itemize}

	{\bf Acknowledgement} The authors would like to thank Prof. Enrique Zuazua for enlightening discussions on the topic. They are also indebted to Prof.  Tan Bui-Thanh,  Prof.  Nhan-Phu Chung, Prof. Nam Q. Le, Prof. Connor Mooney for  discussions on the topics.

	\section{Settings and main results}

	\subsection{Definition of the total transport cost}\label{Sub:Minimi}~~\\
	\underline{\bf Subsection Goal:}	{\it As discussed in the introduction, the optimal transport problem can be interpreted as minimizing a total transport cost. In this subsection, we formulate the corresponding minimization problem, following standard conventions and notations commonly used in optimal transport theory \cite{dolbeault2009new,LMS2018}.}

	Let \( d \in \mathbb{N} \) be a positive integer. We denote by \( \mathcal{M}(\mathbb{R}^d) \) the space of all finite Radon measures on \( \mathbb{R}^d \). We denote $\cM(\Omega)$ for measures supported on $\Omega$.
	
	For a measurable map \( {\tau}: \mathbb{R}^d \to \mathbb{R}^d \) and \( \mu \in \mathcal{M}(\mathbb{R}^d) \), the \textbf{pushforward} of \( \mu \) by \( {\tau} \), denoted \( {\tau}_{\#} \mu \), is defined by
	{
		\[
		\int_{\mathbb{R}^d} \varphi(x) \, d(\tau_{\#} \mu)(x) = \int_{\mathbb{R}^d} \varphi(\tau(x)) \, d\mu(x),
		\]
		for all Borel measurable functions \( \varphi: \mathbb{R}^d \to \mathbb{R} \).}
	
	We denote the Euclidean norm on \(\mathbb{R}^d\) by \(\|\cdot\|\) and we always use the Euclidean distance on $\R^d$ and $\R$.
	
	Let us recall the \emph{bounded Lipschitz distance}, as defined in \cite[Formula (6.6)]{Villanibookold&new}. Let \((X,d_X)\) and \((Y,d_Y)\) be metric spaces. For a locally Lipschitz function \(\varphi: X \to Y\), we define
	\[
	\operatorname{Lip}_K(\varphi) = \sup_{\substack{x, y \in K \\ x \ne y}} \frac{d_Y(\varphi(x), \varphi(y))}{d_X(x, y)}
	\]
	for a compact set \(K \subset X\). If \(\varphi\) is Lipschitz on all of \(X\), we write
	\[
	\operatorname{Lip}(\varphi) = \sup_{\substack{x, y \in X \\ x \ne y}} \frac{d_Y(\varphi(x), \varphi(y))}{d_X(x, y)}.
	\]
	
	The \emph{bounded Lipschitz distance} between \(\mu, \nu \in \mathcal{M}(\mathbb{R}^d)\) is then defined as
	\begin{equation}
		d_{bL}(\mu, \nu) = \sup \left\{ \int_{\mathbb{R}^d} \varphi \, d\mu - \int_{\mathbb{R}^d} \varphi \, d\nu \;\middle|\; \|\varphi\|_{L^\infty} + \operatorname{Lip}(\varphi) \le 1 \right\}.
		\label{dbL}
	\end{equation}

	We denote \( \mathcal{L}^d \) as the Lebesgue measure on \( \mathbb{R}^d \) and \(\mathcal{L}^{2d}\) as the Lebesgue measure on \(\R^{2d}\). For brevity, we often write \( \mathcal{L} \) when the dimension is clear from context. 
	
	\begin{definition}
		We define the subspace \( \mathcal{M}^{\infty}_{\mathcal{L},c} \subset \mathcal{M}(\mathbb{R}^d) \) to consist of all measures \( \mu = f \mathcal{L} \) such that:
		\begin{itemize}
			\item \( f \in L^\infty(\mathbb{R}^d) \),
			\item \( f \) has compact support,
			\item \( f \geq c \) almost everywhere (a.e.) on its support, for a given \( c > 0 \).
		\end{itemize}
		\label{MinfLc}
	\end{definition}

	\begin{remark}
		The notation \( f\mathcal{L} \) has been commonly used \cite{dolbeault2009new,LMS2018}. We follow this convention.
		
	\end{remark}

		Let \(\mu_0(x) = f \mathcal{L}(x)\) and \(\mu_T(x) = g \mathcal{L}(x)\) be two measures in \(\mathcal{M}^{\infty}_{\mathcal{L},c}\). Denote the supports by \[\Omega_f := \operatorname{supp}(f),\quad\Omega_g := \operatorname{supp}(g),\quad \Omega := \Omega_f \times \Omega_g.\]
		We use the notations $|\Omega_f|,|\Omega_g|$ for the Lebesgue measures of $\Omega_f,\Omega_g$, respectively.

		In the framework for constructing neural transport equations, we consider a \textbf{transport cost function} \( \mathscr C : \mathbb{R}^d \times \mathbb{R}^d \to [0, +\infty) \)  and a parameters $\delta>0$.

		We impose the following conditions on the initial and terminal measures $\mu_0,\mu_T$, the transport cost function $\mathscr C$, and the parameter $\delta$. 
	\begin{enumerate}[label=(A\arabic*)]
		\item \label{inimeas}$\mu_0=f\cL,\mu_T=g\cL\in \cM^{\infty}_{\cL,c}$;
		\item \label{convexdomain} $\Omega_f,\Omega_g$ are convex and their boundary is $C^{1,1}$;
		\item \label{c}\(\delta \max \left\{ |\Omega_f|, |\Omega_g| \right\} \leq c\);
		\item \label{Holdercost}$\mathscr C(x,y)$ is locally H\"older continuous.
	\end{enumerate}

	Assumption \ref{inimeas} means that the densities are strictly positive on their supports.

	The two assumptions ``strictly positive densities'' and ``the boundary of the domains are uniformly convex and $C^2$'' are the key conditions in \cite{Caffar92,Caffar96} for establishing the regularity of solutions to the Monge--Ampère equation. These assumptions have since become standard in the study of transport equations and Monge--Ampère problems.

	Assumption \ref{convexdomain} is an improved condition of \cite{Caffar92,Caffar96}, which is based on the work in \cite{ChenLiuWang21}.

	The parameter $\delta$ in \ref{c} together with the H\"older continuity in \ref{Holdercost} also guarantee the regularity required for the Monge-Amp\`ere equation \eqref{Mongeampere}.

	\medskip Let us set up the optimal transport problem. 

	For two measures \( \mu, \nu \in \mathcal{M}(\mathbb{R}^d) \), we write \( \mu \ll \nu \) to denote that \( \mu \) is absolutely continuous with respect to \( \nu \). The corresponding Radon–Nikodym derivative is denoted \( \frac{d\mu}{d\nu} \). The \textbf{marginal entropic cost} of \( \mu \) relative to \( \nu \) is defined as:
	\[
	F(\mu \mid \nu) := 
	\begin{cases}
		\displaystyle \int_{\mathbb{R}^d} \left( \frac{d\mu}{d\nu} - 1 \right)^2 d\nu, & \text{if } \mu \ll \nu, \\
		+\infty, & \text{otherwise}.
	\end{cases}
	\]
	This is the so-called \textbf{Pearson divergence.}

	For \(\gamma\in \cM(\Omega)\), the two marginals of $\gamma$ are defined by \(\gamma_x := (\pi_x)_{\#} \gamma\), \(\gamma_y := (\pi_y)_{\#} \gamma\), where \(\pi_x(x, y) = x\), \(\pi_y (x, y) = y\).	
	In the optimal transport problem, we restrict the coupling to \(\gamma \ll \mathcal{L}\) and let $k$ be the density of $\gamma$, that is $\gamma=k(x,y)\cL$. 
	For convenience, we write:
	\[
	F(k_x \mid f) := F(\gamma_x \mid f\mathcal{L}), \quad F(k_y \mid g) := F(\gamma_y \mid g\mathcal{L}),
	\]
	with the marginal densities
	\[
	k_x(x) := \int_{\Omega_g} k(x, y) \, dy, \quad k_y(y) := \int_{\Omega_f} k(x, y) \, dx.
	\]
	
	We impose the constraint \( k(x,y) \ge \delta \) almost everywhere on \( \Omega \), for a given \( \delta > 0 \). Accordingly, we define
	\begin{equation}\label{delta}
	L^2_\delta(\Omega) := \left\{ k \in L^2(\Omega) \,\middle|\, k(x,y) \ge \delta \text{ a.e. on } \Omega \right\}.
\end{equation}
	
It is straightforward to verify that if \( k \in L^2_\delta(\Omega) \), then the marginal densities satisfy \( k_x \in L^2_{\delta|\Omega_g|}(\Omega_f) \) and \( k_y \in L^2_{\delta|\Omega_f|}(\Omega_g) \), where $L^2_{\delta|\Omega_g|}(\Omega_f),L^2_{\delta|\Omega_f|}(\Omega_g)$ are defined with the same formulation in \eqref{delta}.

Now, let us formally introduce our optimal transport problem. We want to find the minimizer $k$ for
	\begin{equation}\label{minL2del}
		d^{\rm reg}_{\delta,\mathscr C}(f,g) := \inf_{k(x,y)\in L^2_\delta(\Omega)} \left\{ \int_{\Omega} \mathscr C(x,y)\,k(x,y)\,dx\,dy + \frac{1}{2} \|k\|_{L^2}^2 + \frac{1}{2} F(k_x\,|\,f) + \frac{1}{2} F(k_y\,|\,g) \right\},
	\end{equation}
	where \( f\cL, g\cL \in \cM^{\infty}_{\cL,c} \). The motivation for Problem \eqref{minL2del} is discussed in the following remark.

	\begin{remark}
		In \cite{LMS2018}, the authors introduce the classical transport problem from $f\cL$ to $g\cL$ as
		\begin{align}\label{min}
			d_{\mathscr C}(f, g) := \inf_{\gamma \in \mathcal{M}(\Omega)} \left\{ \int_{\Omega} \mathscr C(x, y) \, d\gamma(x, y) + \frac{1}{2} F(\gamma_x \mid f \mathcal{L}) + \frac{1}{2} F(\gamma_y \mid g \mathcal{L}) \right\}.
	\end{align}

	The Sinkhorn algorithm and its variants are used to solve the optimal transport problem, but their limitations require regularization techniques to both establish the problem's well-posedness and reduce computational cost. In a related work, the authors of \cite{Nguyen2022OnUO} propose an $L^2$ penalty with a tunable weight $\eta>0$ into the optimization problem as a regularization technique. Furthermore, we impose a strict positivity for the coupling density.
	These choices lead to the following formulation:
	\begin{equation}\label{minL2deleta}
		d^{\rm reg(\eta)}_{\delta,\mathscr C}(f,g) := \inf_{k(x,y)\in L^2_\delta(\Omega)} \left\{ \int_{\Omega} \mathscr C(x,y)\,k(x,y)\,dx\,dy + \eta \|k\|_{L^2}^2 + \frac{1}{2} F(k_x\,|\,f) + \frac{1}{2} F(k_y\,|\,g) \right\}.
	\end{equation}

	Problem \eqref{minL2deleta} can be solved by the algorithm described in Section \ref{AL}, requiring only a few minor modifications. Moreover, it approximates the transport cost in \eqref{min} with explicit error estimates. Indeed, for any $\varepsilon\in(0,1)$ there exists $\eta_\varepsilon\in(0,1)$ such that, for all $\eta\in(0,\eta_\varepsilon)$
	\begin{align*}
		|d^{\rm reg(\eta)}_{\delta,\mathscr C}(f,g)-d_{\mathscr C}(f,g)|<\varepsilon+O(\delta),
	\end{align*}
	where for every choice of $\delta$, the term $O(\delta)/\delta$ is bounded by some constant depending on $f,g,C$.

	Since our primary goal is to construct a neural transport equation, we fix $\eta=\frac{1}{2}$ in \eqref{minL2deleta} and work with Problem \eqref{minL2del}. The choice $\eta=\frac{1}{2}$ simplifies several expressions in the algorithm, and Problem \eqref{minL2deleta} can be solved with essentially the same numerical scheme. 

	We provide more detail on Problems \eqref{min} and \eqref{minL2deleta} in Appendix \ref{moreremarks}.
	\end{remark}

{Before concluding this section, we also introduce the notation used in our error estimates thereafter. Given two nonnegative quantities $Q_1,Q_2$, if there exists a nonnegative number $\aleph$, depending only on the fixed inputs $f,g,\mathscr C,\delta$ from problem \eqref{minL2del}, such that \[Q_1\le \aleph Q_2,\] then we write
		\[Q_1\lesssim Q_2.\]
	}
	
	\underline{\textbf{Subsection Conclusion:}} \textit{In summary, our main goal is to study the problem \eqref{minL2del}, which is motivated by the strategy employed in \cite{Nguyen2022OnUO} for solving the optimal transport problem.}

	\subsection{The dual problem - Step (B.1) in Control and Optimal Transport Problem (B)}\label{Subsec:Existence}~~\\
	\underline{\bf Subsection Goal:}
	{\it In solving a minimization problem, a common strategy is to investigate the dual problem. In this subsection, we formulate the corresponding dual problem, following the standard conventions and notations typically used in optimal transport theory.
		 After obtaining the dual formulation, we derive the vector fields $\Xi$ and $\rho$ in \eqref{OTEq} using the minimizer $(k_1^\ast,k_2^\ast)$.
	}  
	
	For \( v\mathcal{L} \in \mathcal{M}^{\infty}_{\mathcal{L},c} \) and \( u^{\ast} \in L^2(\operatorname{supp} v) \), we define
	\[
	F^{\ast}_\delta(u^{\ast} \mid v) := \sup_{u \in L^2_\delta(\operatorname{supp} v)} \left\{ \int_{\operatorname{supp} v} u u^{\ast} \, dx - F(u \mid v) \right\}.
	\]
	For \( w^{\ast} \in L^2(\Omega) \), we define
	\[
	\bar{\mathscr C}^{\ast}_{\delta}(w^{\ast}) := \sup_{k \in L^2_\delta(\Omega)} \left\{ \int_{\Omega} k w^{\ast} \, dx \, dy - \int_{\Omega} \mathscr C k \, dx \, dy - \frac{1}{2} \|k\|_{L^2}^2 \right\}.
	\]
	We then obtain the dual problem:
	\begin{equation}\label{mindual}
		D^{\text{\rm reg}}_{\delta,\mathscr C}(f,g) := \inf_{\substack{k^{\ast}_1 \in L^2(\Omega_f) \\ k^{\ast}_2 \in L^2(\Omega_g)}} \left\{ \bar{\mathscr C}^{\ast}_{\delta}(k^{\ast}_1 + k^{\ast}_2) + \frac{1}{2} F^{\ast}_{\delta|\Omega_g|} \left( -2k^{\ast}_1 \mid f \right) + \frac{1}{2} F^{\ast}_{\delta|\Omega_f|} \left( -2k^{\ast}_2 \mid g \right) \right\}.
	\end{equation}

	\begin{lemma}\label{duallem}
		Under Assumption \ref{inimeas} -- \ref{Holdercost}, Problems \eqref{minL2del} and \eqref{mindual} admit minimizers. Furthermore, \( k \) is a minimizer of \eqref{minL2del} and \( (k^{\ast}_1, k^{\ast}_2) \) is a minimizer of \eqref{mindual} if and only if
		\begin{align}
			\label{kkstar}
			k(x,y) &= \max \left\{ \delta, k^{\ast}_1(x) + k^{\ast}_2(y) - \mathscr C(x,y) \right\}, \\
			k_x(x) &= f(x) \max \left\{ \frac{\delta |\Omega_{g}|}{f(x)}, 1 - k^{\ast}_1(x) \right\}, \label{kx} \\
			k_y(y) &= g(y) \max \left\{ \frac{\delta |\Omega_{f}|}{g(y)}, 1 - k^{\ast}_2(y) \right\}, \label{ky}
		\end{align}
		for almost every \( x \in \Omega_{f} \) and \( y \in \Omega_{g} \) with respect to the Lebesgue measure.
		
		Additionally, we have the equality
		\begin{equation}
			\label{sum=0}
			d^{\rm reg}_{\delta,\mathscr C}(f,g) + D^{\rm reg}_{\delta,\mathscr C}(f,g) = 0.
		\end{equation}
	\end{lemma}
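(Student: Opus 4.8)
The plan is to recognize \eqref{minL2del} as the minimization of a convex functional over $L^2_\delta(\Omega)$ and to apply Fenchel--Rockafellar duality in the Hilbert space $L^2(\Omega)$. First I would set up the primal functional
\[
\mathcal{J}(k) \ := \ \int_\Omega C k \, dx\,dy + \eta\|k\|_{L^2}^2 + \tau F(k_x|f_\varepsilon) + \tau F(k_y|g_\varepsilon) + \chi_{L^2_\delta(\Omega)}(k),
\]
where $\chi$ denotes the convex indicator. Each term is convex and lower semicontinuous on $L^2(\Omega)$: the linear term is continuous by \ref{boundC}, the regulator $\eta\|\cdot\|^2$ is strongly convex and continuous, the constraint set $L^2_\delta(\Omega)$ is closed and convex, and $k\mapsto F(k_x|f_\varepsilon)$ is the composition of the bounded linear marginal map $k\mapsto k_x = \int_{\Omega_g} k\,dy$ with the convex lsc functional $u\mapsto \int_{\Omega_f}(u/f_\varepsilon-1)^2 f_\varepsilon\,dx$ (the Pearson-type divergence against $f_\varepsilon \geq \varepsilon$, which is well-defined and lsc since $f_\varepsilon$ is bounded away from zero on $\Omega_f$). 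Strong convexity of $\mathcal J$ via the $\eta\|k\|^2$ term forces coercivity on $L^2_\delta(\Omega)$, hence existence and uniqueness of a primal minimizer; this also reproves Lemma \ref{minL2lem}.

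Second, I would compute the Fenchel conjugates. The splitting $w^\ast = k_1^\ast + k_2^\ast$ in \eqref{mindual} is exactly the dual decomposition dual to the constraint that the argument of the cost term and the arguments of the two marginal terms be linked through $k$; concretely, one writes $\mathcal J(k) = \Phi(\Lambda k) + \Psi(k)$ for a suitable bounded linear $\Lambda$ sending $k$ to the triple $(k, k_x, k_y)$, identifies $\Phi^\ast$ with $\bar C^\ast_{\eta,\delta}$ acting on the sum plus the two $F^\ast_\delta$ terms (with the scalings $\delta|\Omega_g|$, $\delta|\Omega_f|$ coming from the observation recorded before the lemma that $k\in L^2_\delta$ implies $k_x \in L^2_{\delta|\Omega_g|}$ and $k_y \in L^2_{\delta|\Omega_f|}$), and checks the Attouch--Br\'ezis/Rockafellar constraint qualification — here trivially satisfied because the constant function $k\equiv \max\{\delta, \text{something}\}$ lies in the interior of the relevant domains and all functionals are finite and continuous there. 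This yields \eqref{sum=0}, i.e.\ strong duality $d^{\varepsilon,\eta}_{\delta,C} + D^{\varepsilon,\eta}_{\delta,C} = 0$, together with existence of a dual minimizer $(k_1^\ast, k_2^\ast)$.

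Third, for the explicit primal--dual relations \eqref{kkstar}--\eqref{ky} I would write down the first-order optimality (Euler--Lagrange) conditions. For \eqref{kkstar}: the map $k\mapsto \int_\Omega(w^\ast - C)k - \eta\|k\|^2$ over $L^2_\delta(\Omega)$ is maximized pointwise, and $\partial_k\big[(w^\ast-C)k - \eta k^2\big] = 0$ gives $k = (w^\ast - C)/(2\eta)$, projected onto $[\delta,\infty)$ to respect the constraint — hence the $\max\{\delta, \cdot\}$ with $w^\ast = k_1^\ast + k_2^\ast$. For \eqref{kx}--\eqref{ky}: the conjugate $F^\ast_{\delta|\Omega_g|}(-k_1^\ast/\tau \,|\, f_\varepsilon)$ is computed by pointwise maximization of $u\mapsto -\tfrac{k_1^\ast}{\tau} u - (u/f_\varepsilon - 1)^2 f_\varepsilon$ over $u\geq \delta|\Omega_g|$; setting the derivative to zero gives $u = f_\varepsilon\big(1 - \tfrac{k_1^\ast}{2\tau}\big)$, again truncated from below at $\delta|\Omega_g|$, i.e.\ $k_x = f_\varepsilon\max\{\delta|\Omega_g|/f_\varepsilon,\, 1 - k_1^\ast/(2\tau)\}$. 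The equivalence ``$k$ optimal and $(k_1^\ast,k_2^\ast)$ optimal $\iff$ \eqref{kkstar}--\eqref{ky}'' is then the statement that equality holds in the Fenchel--Young inequality for $\Phi, \Phi^\ast$ and for $\Psi,\Psi^\ast$ simultaneously, which is exactly the subdifferential inclusion $\Lambda k \in \partial\Phi^\ast(\text{dual vars})$ unwound coordinatewise.

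I expect the main obstacle to be the rigorous conjugate computation for the marginal term: one must justify interchanging the supremum defining $F^\ast_\delta$ with the integral (a measurable-selection / pointwise-maximization argument valid because the integrand is a normal integrand and the constraint $u\geq \delta|\Omega_g|$ is pointwise), and one must handle the case where the unconstrained optimizer $1 - k_1^\ast/(2\tau)$ dips below the floor $\delta|\Omega_g|/f_\varepsilon$, on which set the constraint is active and contributes a nonnegative multiplier — this is precisely what produces the $\max$ rather than a plain affine formula. Everything else (lsc, coercivity, the constraint qualification, strong duality) is standard Hilbert-space convex analysis once the functional is correctly decomposed.
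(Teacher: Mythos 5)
Your overall architecture (convex primal functional, Fenchel--Rockafellar duality with $\Lambda k=(k,k_x,k_y)$, pointwise conjugate computations for \eqref{kkstar}--\eqref{ky}) is a legitimate alternative to the paper's route, which instead proves \eqref{sum=0} by a minimax theorem applied to the Lagrangian $L(k,k_1^\ast,k_2^\ast)$ and then establishes existence of a dual minimizer by a separate, rather delicate argument (truncating the minimizing dual sequence by $2\tau(1-\delta|\Omega_g|/f_\varepsilon)$ and controlling the sets where the constraint is active via Egoroff's theorem). Your third step, the Fenchel--Young equality characterization of optimality, matches in substance the paper's quadratic-completion estimates and would go through once strong duality and attainment are secured.

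The genuine gap is in your constraint qualification, and it is precisely the point where the paper has to work hardest. You claim the qualification is ``trivially satisfied because the constant function $k\equiv\max\{\delta,\cdot\}$ lies in the interior of the relevant domains and all functionals are finite and continuous there.'' But the sets $L^2_\delta(\Omega)$, $L^2_{\delta|\Omega_g|}(\Omega_f)$, $L^2_{\delta|\Omega_f|}(\Omega_g)$ have \emph{empty interior} in the $L^2$ norm topology: every $L^2$ ball around a function bounded below by $\theta$ contains functions dipping below $\theta$ on small sets, so the indicators of these sets are nowhere continuous and no constant function is an interior point. Since the dual problem \eqref{mindual} is built from the \emph{restricted} conjugates $F^\ast_{\delta|\Omega_g|}$, $F^\ast_{\delta|\Omega_f|}$ (suprema over $L^2_{\delta|\Omega_g|}$, $L^2_{\delta|\Omega_f|}$), the indicators must sit somewhere in your decomposition $\Phi(\Lambda k)+\Psi(k)$, and the naive interior-point qualification fails wherever you put them. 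Consequently neither the duality identity \eqref{sum=0} nor --- more importantly --- dual attainment comes ``for free'' as you assert; the paper devotes its longest subsection exactly to proving that \eqref{mindual} admits a minimizer, because the minimax theorem it uses yields only the value identity. Your argument can be repaired: put all pointwise constraints into the $k$-functional, leave only the everywhere-continuous quadratic marginal penalties (continuous since $f_\varepsilon,g_\varepsilon\ge\varepsilon$) in $\Phi$, apply Fenchel--Rockafellar to get strong duality and attainment for the dual with \emph{unrestricted} conjugates, and then observe that since $F^\ast_\theta\le F^\ast$ pointwise while weak duality holds for the restricted dual, the same maximizer attains the restricted dual \eqref{mindual} with the same value. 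As written, however, the qualification step is false and the attainment claim is unjustified.
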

	
	Lemma \ref{duallem} provides the theoretical basis for our algorithm and neural network construction. The proof is divided into four smaller sections, which are presented in Section \ref{produallem}.

	\begin{remark}\label{Lfg}
		The minimizer \((k^{\ast}_1, k^{\ast}_2)\) of \eqref{mindual} can be chosen so that
		\[
		k^{\ast}_1 \in \mathcal{L}^{f}_{g} := \left\{ k^{\ast} \in L^2(\Omega_f) \mid k^{\ast} \le 1 - \frac{\delta |\Omega_g|}{f} \text{ a.e.} \right\}
		\]
		and
		\[
		k^{\ast}_2 \in \mathcal{L}^{g}_{f}.
		\]
	\end{remark}
	
	We now state the proposition that gives the exact continuity equation \eqref{OTEq}, which is obtained from the minimizer $(k^\ast_1,k^\ast_2)$.
	
	\begin{proposition}
		Assuming \ref{inimeas} -- \ref{Holdercost}. Denote by \( k \) the minimizer of \eqref{minL2del} and by \((k^\ast_1, k^\ast_2) \in \mathcal{L}^f_g \times \mathcal{L}^g_f\) the minimizer of \eqref{mindual}, where \(\mathcal{L}^f_g\) is defined in Remark \ref{Lfg}.
		
		Let \(\Phi\) be the solution of the Monge--Ampère equation
		\[
		k_y(\nabla \Phi(x)) \det(D^2 \Phi(x)) = k_x(x).
		\]
		
		Define the vector fields
		\[
		\begin{cases}
			\displaystyle \Xi_t(x) = \frac{1}{T} \left( \nabla \Phi(\mathscr{T}_t^{-1}(x)) - \mathscr{T}_t^{-1}(x) \right), \\[8pt]
			\displaystyle \rho_t(x) = \frac{1}{T} \left( -\frac{k^\ast_1(\mathscr{T}_t^{-1}(x))}{1 - \frac{t}{T} k^\ast_1(\mathscr{T}_t^{-1}(x))} + \frac{k^\ast_2(\mathscr{T}_t^{-1}(x))}{1 - \frac{t}{T} k^\ast_2(\mathscr{T}_t^{-1}(x))} \right), \\[8pt]
			\displaystyle \mathscr{T}_t(x) = \frac{T - t}{T} x + \frac{t}{T} \nabla \Phi(x).
		\end{cases}
		\]
		
		Then, there exists a flow \(\mu_t\) solving the continuity equation
		\[
		\partial_t \mu_t + \nabla_x \cdot (\Xi_t \mu_t) = \rho_t \mu_t,
		\]
		with initial and terminal conditions \(\mu_0 = f \cL\) and \(\mu_T = g \cL\).
		\label{eqthexactcapital}
	\end{proposition}
	\begin{remark}
		The existence and uniqueness of $\Phi$ in the proposition is guaranteed by the works in \cite{Brenier,Cann1995}.
	\end{remark}
	\begin{remark}
		The proposition only guarantees the existence of a specific pair \(\Xi, \rho\) governing the dynamics and the existence of a solution transporting \(f \cL\) to \(g \cL\). It does {not} guarantee the uniqueness of either the dynamics or the solution.
		
		For examples of non-uniqueness in similar dynamics, the reader may refer to \cite[Section 5.2]{liero2016optimal} and \cite[Theorem 8.18]{LMS2018}. 

		In our setting, \(\Xi\) and \(\rho\) are constructed based on \(\nabla \Phi\). Although, $\nabla\Phi$ is H\"older continuous by \cite{Caffar96}, or belongs to \(W^{1,1}(\Omega_f)\) by \cite{W21reg}, these are insufficient for \(\Xi\) and \(\rho\) to satisfy the conditions in \cite[Equations ($*$) and ($**$)]{PernaLion} or \cite[Proposition 3.6]{MANIGLIA2007601} that would guarantee uniqueness of solutions to the transport equation.
	
		\underline{\bf Subsection Conclusion:}  {\it In summary, a common strategy for solving a minimization problem is to investigate the corresponding dual problem. In Lemma \ref{duallem}, we prove that both Problem \eqref{minL2del} and Problem \eqref{mindual} admit minimizers. In Proposition \ref{eqthexactcapital}, we show that the minimizer of the dual problem can be used to formulate the transport equation \eqref{OTEq}.

			In the next subsection, we will present our numerical Sinkorn-insprired scheme that provides the minimizers explicitly.
	}
	\subsection{Numerical Sinkhorn-inspired gradient-based algorithm to solve the dual problem - Step (B.2) in Control and Optimal Transport Problem (B)}\label{AL}~~\\
	\underline{\bf Subsection Goal:}
	{\it The minimizer \((k^\ast_1, k^\ast_2)\) of Problem \eqref{mindual} is crucial for constructing our transport equation \eqref{OTEq}. Below, we will show that \((k^\ast_1, k^\ast_2)\) can be computed explicitly through our numerical algorithm. Our Sinkhorn-inspired gradient-based algorithm follows the one developed in \cite{Nguyen2022OnUO}, which itself is based on prior work  \cite{LanZhou18}. Explicit error estimates are provided to ensure the robustness and stability of the proposed algorithm.	
	}

	Under Assumptions \ref{inimeas} -- \ref{Holdercost}, we aim to design an algorithm that approximates the solution \((k^{\ast}_1, k^{\ast}_2)\) of \eqref{mindual}.

	\medskip 
	
	The algorithm relies on three parameters $q,s$ and $r$, which are given as follows:
	\begin{equation}\label{qsr}
			q := \sqrt{\frac{2}{c}\sqrt{2 \left( \max\{ |\Omega_f|, |\Omega_g| \}^2 + \left( \max\{\|f\|_{L^\infty},\|g\|_{L^\infty}\} - \frac{c}{2} \right)^2 \right)}}, \quad s := \frac{qc}{2}, \quad r := \frac{q}{1 + q}.
	\end{equation}
	The parameters are derived from the technical details in the convergence proof (Section \ref{proalthm}), which based on \cite{Nguyen2022OnUO,LanZhou18}.

	We now present the Sinkhorn-inspired algorithm.

	\bigskip
	
	\noindent
	\textbf{Numerical Sinkhorn-inspired gradient-based Algorithm to Solve the Optimization Problem \eqref{mindual}:}
	
	\medskip
	
	For each \( n \in \mathbb{N} \), let \( X^n \), \( X^n_0 \), \( X^n_\ast \), \( Y^n \), \( Y^n_0 \), and \( Y^n_\ast \) denote the six \( L^2 \)-functions obtained after the \( n \)-th iteration of the algorithm. We initialize all functions at zero:
	\[
	X^0 = X^0_0 = X^0_\ast = Y^0 = Y^0_0 = Y^0_\ast = 0.
	\]
	For convenience, we assume the functions marked with \( X \) are supported on \( \Omega_f \), and those marked with \( Y \) on \( \Omega_g \).
	
	\begin{enumerate}
		\item \textbf{Update \( X^{n+1} \) using \( X^{n}_\ast \) and \( Y^{n}_\ast \):}
		\[
		X^{n+1}(x) = \int_{\Omega_g} \max \left\{ \delta, X^n_\ast(x) + Y^n_\ast(y) - \mathscr C(x,y) \right\} \, dy + \bigl( X^n_\ast(x) - 1 \bigr) f(x).
		\]
		This step integrates over \( \Omega_g \), taking the maximum between the regularization parameter \( \delta \) and the sum \( X^n_\ast(x) + Y^n_\ast(y) - \mathscr C(x,y) \). The second term adjusts the previous iterate scaled by \( f(x) \).
		
		\item \textbf{Update \( X^{n+1}_0 \) using \( X^{n}_0 \), \( X^{n} \), and \( X^{n+1} \):}
		\[
		X^{n+1}_0(x) = \min \left\{ \frac{s X^n_0(x)}{c/2 + s} - \frac{(1 + r) X^{n+1}(x) - r X^n(x)}{c/2 + s}, \quad 1 - \frac{|\Omega_g|}{f(x)} \right\}.
		\]
		This update balances the previous value \( X^n_0 \) with the new iterates \( X^{n+1} \) and \( X^n \), and enforces an upper bound for numerical stability.
		
		\item \textbf{Update \( X^{n+1}_\ast \) using \( X^{n}_\ast \) and \( X^{n+1}_0 \):}
		\[
		X^{n+1}_\ast(x) = \frac{q X^n_\ast(x)}{1 + q} + \frac{X^{n+1}_0(x)}{1 + q}.
		\]
		This is a weighted average of the previous value and the newly updated \( X^{n+1}_0 \), where \( q > 0 \) is a parameter controlling the update.
		
	\end{enumerate}
	
	\begin{enumerate}[resume]
		\item \textbf{Update \( Y^{n+1} \) using \( X^{n}_\ast \) and \( Y^{n}_\ast \):}
		\[
		Y^{n+1}(y) = \int_{\Omega_f} \max \left\{ \delta, X^n_\ast(x) + Y^n_\ast(y) - \mathscr C(x,y) \right\} \, dx + \bigl( Y^n_\ast(y) - 1 \bigr) g(y).
		\]
		This step mirrors the \( X^{n+1} \) update, integrating over \( \Omega_f \) and scaling by \( g(y) \).
		
		\item \textbf{Update \( Y^{n+1}_0 \) using \( Y^{n}_0 \), \( Y^{n} \), and \( Y^{n+1} \):}
		\[
		Y^{n+1}_0(y) = \min \left\{ \frac{s Y^n_0(y)}{c/2 + s} - \frac{(1 + r) Y^{n+1}(y) - r Y^n(y)}{c/2 + s}, \quad 1 - \frac{|\Omega_f|}{g(y)} \right\}.
		\]
		This balances the previous \( Y^n_0 \) with new iterates, enforcing bounds for stability.
		
		\item \textbf{Update \( Y^{n+1}_\ast \) using \( Y^{n}_\ast \) and \( Y^{n+1}_0 \):}
		\[
		Y^{n+1}_\ast(y) = \frac{q Y^n_\ast(y)}{1 + q} + \frac{Y^{n+1}_0(y)}{1 + q}.
		\]
		Similarly, this weighted average combines the previous and current iterates.
	\end{enumerate}
	
	After \( L \) iterations, the approximate minimizers are given by
	\[
	\bar{k}^{\ast}_1 := X^{L}_0, \quad \text{and} \quad \bar{k}^{\ast}_2 := Y^{L}_0,
	\]
	which approximate the true minimizers \( (k^{\ast}_1, k^{\ast}_2) \) of the dual problem \eqref{mindual}. This approximation enables the construction of the transport equation.
	
	These functions provide approximations with explicit error estimates, as detailed in the subsequent proposition.

	\begin{proposition}\label{althm}
		Under Assumptions \ref{inimeas} -- \ref{Holdercost}, 
		the above algorithm produces \( L^2 \) functions \( (\bar{k}^{\ast}_1, \bar{k}^{\ast}_2) \) such that
		\[
		\| k^{\ast}_j - \bar{k}^{\ast}_j \|_{L^2} \lesssim r^{L/2} = \left( \frac{q}{q+1} \right)^{L/2}, \quad j = 1, 2,
		\]
		where \( (k^{\ast}_1, k^{\ast}_2) \) is a minimizer of \eqref{mindual} and $L$ is the number of iterations.
	\end{proposition}
	
	Proposition \ref{althm} provides an error estimate for the algorithm. The proof is given in Section \ref{proalthm}.

	\underline{\bf Subsection Conclusion:}  {\it In summary, we provide a numerical scheme to find the minimizer \((k^\ast_1, k^\ast_2)\) of Problem \eqref{mindual}. The proposed Sinkhorn-inspired algorithm not only solves the UOT minimization problem but also provides explicit error estimates. To be more precise, the error between the numerical solution at iteration $L$ and the true solution can be bounded by 
		$
		\left( \frac{q}{q+1} \right)^{L/2},
		$
		which converges to zero as $L \to \infty$. These bounds ensure the reliability of the algorithm, laying a rigorous theoretical foundation while deferring numerical validation to future work.
		In the next subsection, we will use the numerically obtained minimizers to build the approximation of the transport equation \eqref{OTEq}.
		
	}

	\subsection{Numerical approximations of transport equation (\ref{OTEq}) }\label{Subsec:Transport}
	~~\\
	\underline{\bf Subsection Goal:}    \textit{In this subsection, we apply the output of the optimization algorithm to construct a continuity equation transporting \( f \) to \( g \). Specifically, for \( T > 0 \), we seek an approximation to the continuity equation discussed in \eqref{OTEq}:}
	\[
	\begin{cases}
		\partial_t \mu_t + \nabla_x \bigl( \Xi_t \, \mu_t \bigr) = \rho_t \, \mu_t, \\
		\mu_0 = f \, \mathcal{L}, \quad \mu_T = g \, \mathcal{L},
	\end{cases}
	\]
	\textit{where \( \Xi \) and \( \rho \) are defined explicitly in Proposition \ref{eqthexactcapital}.}

	The strategy for the approximation to \eqref{OTEq} is as follows. After the algorithm produces \(\bar{k}_j^\ast\), we explicitly derive the approximations \(\widetilde{k}_j^\ast\), which are restrictions of smooth functions. Using $\widetilde{k}_j^\ast$, we construct approximations $\overline{f}, \overline{g}$ of the original functions $f, g$ in a suitable limit (see the statement of Proposition~\ref{eqthmexact}). Next, we define vector fields $\xi, \zeta$ such that the continuity equation
	\begin{equation*}
		\begin{cases}
			\partial_t \overline{\mu}_t + \nabla_x \cdot (\xi_t \overline{\mu}_t) = \zeta_t \overline{\mu}_t, \\
			\overline{\mu}_0 = \overline{f} \cL, \quad \overline{\mu}_T = \overline{g} \cL
		\end{cases}
	\end{equation*}
	is satisfied.
	
	\medskip	
		
		In Proposition \ref{eqthmexact} and Theorem \ref{neuralthm}, we approximate \(\Xi, \rho\) by \(\xi, \zeta\), which have higher regularity. Consequently, the approximate transport equation admits a unique solution.
	\end{remark}

	Now, we construct an approximation. To this end, we recall some results concerning the associated Monge-Ampère equation. We begin by explicitly producing approximations $\widetilde{k^\ast_1}, \widetilde{k^\ast_2}$, which are restrictions of smooth functions to $\Omega_f$ and $\Omega_g$, respectively. To be precise, we have the following lemma:
	\begin{lemma}
		For \(\varepsilon_0 > 0\), we can explicitly build \(\widetilde{k^\ast_1}\) and \(\widetilde{k^\ast_2}\), which are restrictions to \(\Omega_f\) and \(\Omega_g\) of smooth functions defined on $\R^d$. These functions satisfy
		\begin{equation}\label{kasttilde}
			\|\bar{k}^\ast_i - \widetilde{k^\ast_i}\|_{L^2} < \varepsilon_0,
		\end{equation}
		where \(\bar{k}^\ast_1\) and \(\bar{k}^\ast_2\) are the outputs from Proposition~\ref{althm}, under \ref{inimeas} -- \ref{Holdercost}, corresponding to a sufficiently large iteration number \(L\).
		\label{smoothapprox}
	\end{lemma}
	We will prove Lemma \ref{smoothapprox} along with Proposition \ref{eqthmexact} in Section \ref{proeqthmexact}.

	Next we define the function
	\[
		\bar{k}(x,y) := \max\left\{ \delta, \widetilde{k^{\ast}_1}(x) + \widetilde{k^{\ast}_2}(y) - \mathscr C(x,y) \right\} \quad \text{on } \Omega.
	\]
	Let $\bar{k}_x,\bar{k}_y$ be the two marginal densities of $\bar{k}$, we consider the following Monge-Amp\`ere equation:
\begin{equation}
		\label{Mongeampere}
		\bar{k}_y(\nabla \phi(x)) \det(D^2 \phi(x)) = \bar{k}_x(x).
	\end{equation}
	We recall the existence and uniqueness of the solution to \eqref{Mongeampere} obtained from \cite{Brenier,Cann1995}.

	Since the Monge-Ampère equation is given in an explicit form, we assume that a solution is available. In practice, this solution can be computed using well-established numerical schemes (see, e.g., \cite{benamou2010two, benamou2014numerical, berman2020sinkhorn, brenner2024nonlinear}).

	 For the solutions of Monge-Amp\`ere equation $\phi$ in \eqref{Mongeampere} and $\Phi$ in Proposition \ref{eqthexactcapital}, the authors in \cite{PhiFiPDE} prove a stability result: there exists a constant $\vartheta>1$ such that
		\begin{align}\label{vartheta}
			\|\nabla\phi-\nabla\Phi\|_{L^{\vartheta}(\Omega_f)}\to 0 \text{ as }L\to+\infty, \varepsilon_0\to0^{+}.
		\end{align}
		We will use \eqref{vartheta} to state Theorem \ref{neuralthm} first and discuss it in detail in Section \ref{consol}.

	In the next proposition, we begin constructing a first approximation to \eqref{OTEq} using our numerical algorithm.

	\begin{proposition}
		Assume \ref{inimeas} -- \ref{Holdercost}.
		
		Let \(\phi\) be a convex function solving the Monge-Ampère equation \eqref{Mongeampere}, depending on a large iteration number \(L\) and a small \(\varepsilon_0>0\). Then, we can construct approximations \(\overline{f} \in L^2(\Omega_f)\) and \(\overline{g} \in L^2(\Omega_g)\) such that
		\[
		\|f - \overline{f}\|_{L^2}, \quad \|g - \overline{g}\|_{L^2} \lesssim r^{L/2} + \varepsilon_0,
		\]
		and define a transport equation of the form
		\begin{equation} \label{Inhomoeqthmexact}
			\begin{cases}
				\partial_t \overline{\mu}_t + \nabla_x \cdot (\xi_t \overline{\mu}_t) = \zeta_t \overline{\mu}_t, \\
				\overline{\mu}_0 = \overline{f} \cL, \quad \overline{\mu}_T = \overline{g} \cL,
			\end{cases}
		\end{equation}
		which describes the evolution from \(\overline{f}\) to \(\overline{g}\) under the flow generated by \((\xi_t, \zeta_t)\).
		
		In \eqref{Inhomoeqthmexact}, the vector fields are given by
		\begin{align}
			\xi_t(x) &= \frac{1}{T} \left( \nabla \phi(\T_t^{-1}(x)) - \T_t^{-1}(x) \right), \label{xidef} \\
			\zeta_t(x) &= \frac{1}{T} \left(
			- \frac{\widetilde{k^{\ast}_1}(\T_t^{-1}(x))}{1 - \frac{t}{T} \widetilde{k^{\ast}_1}(\T_t^{-1}(x))}
			+ \frac{\widetilde{k^{\ast}_2}(\nabla \phi(\T_t^{-1}(x)))}{1 - \frac{t}{T} \widetilde{k^{\ast}_2}(\nabla \phi(\T_t^{-1}(x)))}
			\right), \label{zetadef} \\
			\T_t(x) &= \frac{T - t}{T} x + \frac{t}{T} \nabla \phi(x). \label{Tdef}
		\end{align}
		
		\label{eqthmexact}
	\end{proposition}
	
	The proof of this proposition is given in Section~\ref{proeqthmexact}.
	
	\begin{remark} Suppose we want to transport \( f\cL \) to \( g\cL \). The proposition provides a construction of an inhomogeneous transport equation \eqref{Inhomoeqthmexact} that transports \( \overline{f}\cL \) to \( \overline{g}\cL \), where \( \overline{f}\cL \) and \( \overline{g}\cL \) converge to \( f\cL \) and \( g\cL \), respectively, as \( L \to \infty \) and \( {\varepsilon}_0 \to 0 \). 
		
		The inhomogeneous transport equation is built upon the functions \( \bar{k}^\ast_1, \bar{k}^\ast_2 \) obtained in Proposition~\ref{althm}. Proposition~\ref{eqthmexact} establishes an \( L^2 \)-approximation of the transport equation.
	\end{remark}
	
	\underline{\bf Subsection Conclusion:}  {\it In summary, after numerically obtaining the minimizer  \( \bar{k}^\ast_1, \bar{k}^\ast_2 \) of Problem~\eqref{mindual}, we construct an explicit approximation \eqref{Inhomoeqthmexact} of the transport equation~\eqref{OTEq}. 
		
	}
	
	\subsection{Step (B.3) in Control and Optimal Transport Problem (B)}\label{Subsec:Main} {\it In this subsection, we will construct the neural transport equation \eqref{ContinuityInhomo} that approximate the transport equation~\eqref{OTEq} in a certain limit regime.
	}

	As discussed above, since the Monge-Amp\`{e}re equation has an explicit form, we assume that the solution is given explicitly. In practice, the solution of the Monge-Amp\`{e}re equation can be numerically obtained using well-established numerical schemes (see, for example, those in \cite{benamou2010two, benamou2014numerical, berman2020sinkhorn, brenner2024nonlinear}).

	We utilize the concept of Network Approximate Identity (nAI) introduced in \cite{Tan2024} to construct our approximate transport equation. We first recall the definition of an approximate identity.
	
	\begin{definition}
		A family \(\{\Gamma_\varsigma: \mathbb{R}^d \to \mathbb{R}\}_{\varsigma > 0}\) is called an approximate identity if
		\begin{itemize}
			\item \(\sup_{\varsigma} \|\Gamma_\varsigma\|_{L^1} < \infty\);
			\item \(\int_{\mathbb{R}^d} \Gamma_\varsigma(x) \, dx = 1\);
			\item For every \(\varrho > 0\), \(\lim_{\varsigma \to 0} \int_{|x| > \varrho} |\Gamma_\varsigma(x)| \, dx = 0.\)
		\end{itemize}
	\end{definition}
	
	\begin{definition}\label{nAIdef}
		An activation \(\sigma: \mathbb{R} \to \mathbb{R}\) is called an nAI if there exist \(1 \leq N' \in \mathbb{N}\), weights \(\{W_i'\}_{i=1}^{N'}\), coefficients \(\{A_i'\}_{i=1}^{N'}\), and biases \(\{b_i'\}_{i=1}^{N'} \subset \mathbb{R}\) such that
		\[
		\Gamma(x) = \sum_{i=1}^{N'} W_i' \sigma(A_i' x + b_i'), \quad \text{and} \quad \Gamma_\varsigma(x) = \frac{1}{\varsigma} \Gamma\left(\frac{x}{\varsigma}\right),
		\]
		form a family \(\{\Gamma_\varsigma\}\) that is an approximate identity.
	\end{definition}
	
	In \cite{Tan2024}, the author shows that a class of activation functions \(\sigma\) is nAI. This class includes ReLU, RePU, sigmoid, hyperbolic tangent, softplus, arctangent, ELU, GELU, SiLU, and Mish activations, as discussed in \cite{Tan2024}.

	{
		In the aforementioned examples of activation functions for nAI, we further observe that these functions are locally Lipschitz continuous. Therefore, we also consider $\sigma$ is locally Lipschitz continuous, which is not a strong assumption.
	}
	
	\begin{theorem}\label{neuralthm}
		Assume \ref{inimeas} -- \ref{Holdercost}.
		
		Let \(\Phi\) be the solution of Monge-Amp\`ere equation in Proposition \ref{eqthexactcapital} and \( \phi \) be the solution of the Monge-Amp\`ere equation \eqref{Mongeampere},  depending on a large iteration number \(L\) and a small \(\varepsilon_0>0\). The constant $\vartheta>1$ is defined in \eqref{vartheta}. Let $\T,\xi,\zeta$ be the vector fields defined in \eqref{xidef}--\eqref{Tdef} of Proposition \ref{eqthmexact}. Let \(\bf{\Sigma}\) be the activation function in \eqref{bfSigma} with \( \sigma \) is a locally Lipschitz nAI.

		 For any \(\varepsilon_1 > 0\), we can explicitly construct \(\widetilde{f}, \widetilde{g} \in L^1(\mathbb{R}^d)\) and triplets
				\[
				(W_i(t), A_i, b_i)_{i=1}^N \in \mathbb{R}^{d \times d} \times \mathbb{R}^{d \times d} \times \mathbb{R}^d,
				\]
				for some \( N \in \mathbb{N} \), depending on \(\varepsilon_0, \varepsilon_1, L\), such that the neural transport equation
				\begin{equation}\label{neuralthmode}
					\begin{cases}
						\partial_t \widetilde{\mu}_t + \nabla_x \cdot \left( \displaystyle\sum_{i=1}^N W_i(t) \mathbf{\Sigma}(A_i x + b_i) \widetilde{\mu}_t \right) = \zeta_t \widetilde{\mu}_t, \\
						\widetilde{\mu}_0 = \widetilde{f} \cL, \quad \widetilde{\mu}_T = \widetilde{g} \cL,
					\end{cases}
				\end{equation}
				admits a  unique solution \(\widetilde{\mu}_t\) that converges to a solution \(\mu_t\) of \eqref{OTEq}, with \(\mu_0 = f \cL\) and \(\mu_T = g \cL\), under the error estimate 
				\[
				d_{bL}(\widetilde{\mu}_t, \mu_t) \lesssim (1 + \varepsilon_0 + r^{L/2}) {\bigl(\varepsilon_0 + r^{L/2} + \varepsilon_1  \bigr)} +  \| \nabla \phi - \nabla \Phi \|_{L^\vartheta}{\quad\forall t\in[0,T]},
				\]
				where \(d_{bL}\) is defined in \eqref{dbL}.
	\end{theorem}

	The proof of  the  theorem is given in Section \ref{proneuralthm}.
	\begin{remark}\label{RemarkMain} Suppose we want to transport \( {f}\cL \) to \( {g}\cL \), the theorem gives a construction of a neural differential equation \eqref{neuralthmode} that transports \( \widetilde{f}\cL \) to \( \widetilde{g}\cL \), where \( \widetilde{f}\cL \) and \( \widetilde{g}\cL \) converge to  \( {f}\cL \) and \( {g}\cL \) as \( \varepsilon_0, \varepsilon_1 \to 0^+ \) and \( L \to \infty \). 
		
		The above convergence follows immediately from Theorem \ref{neuralthm}, which shows that the flows of the neural transport equation  \eqref{ContinuityInhomo} converge to the actual optimal solution of the dynamic optimal transport in the limit of \( \varepsilon_0, \varepsilon_1 \to 0^+ \) and \( L \to \infty \).

	\end{remark}
	
			%
	
	\begin{remark}
		In Theorem \ref{neuralthm}, instead of using \( \T, \xi, \zeta \) defined by \( \nabla \phi \) solving \eqref{Mongeampere}, we can consider a sequence \( \nabla \bar{\phi}_n \overset{L^2}{\longrightarrow} \nabla \phi \) as $n\to\infty$, where {\( \bar{\phi}_n \) is strongly convex and \( \bar{\phi}_n \in C^2(\Omega_f) \).}
		We suppose that the sequence \( \nabla \bar{\phi}_n \) is obtained by numerically solving \eqref{Mongeampere} using well-studied numerical schemes (see, for instance, \cite{benamou2010two, benamou2014numerical, berman2020sinkhorn, brenner2024nonlinear}). We define new vector fields
		\[
		\begin{split}
			\bar{\T}_t(x) &= \frac{T-t}{T} x + \frac{t}{T} \nabla \bar{\phi}_n(x), \\
			\bar{\xi}_t(x) &= \frac{1}{T} \left( \nabla \bar{\phi}_n(\bar{\T}_t^{-1}(x)) - \bar{\T}_t^{-1}(x) \right), \\
			\bar{\zeta}_t(x) &= \frac{1}{T} \left( -\frac{\widetilde{k^{\ast}_1}(\bar{\T}_t^{-1}(x))}{1 - \frac{t}{T} \widetilde{k^{\ast}_1}(\bar{\T}_t^{-1}(x))} + \frac{\widetilde{k^{\ast}_2}(\nabla \bar{\phi}_n(\bar{\T}_t^{-1}(x)))}{1 - \frac{t}{T} \widetilde{k^{\ast}_2}(\nabla \bar{\phi}_n(\bar{\T}_t^{-1}(x)))} \right).
		\end{split}
		\]
		
		{
			We can construct \( \widetilde{f}, \widetilde{g} \in L^{1}(\mathbb{R}^d) \), a number \( N \), and triplets
			\[
			(W_i(t), A_i, b_i)_{i=1}^N \in \mathbb{R}^{d \times d} \times \mathbb{R}^{d \times d} \times \mathbb{R}^d,
			\]
			depending on \(\varepsilon_0, \varepsilon_1, n, L\), such that the neural ODE
			\[
			\begin{cases}
				\partial_t \widetilde{\mu}_t + \nabla_x \cdot \left( \sum_{i=1}^N W_i(t) \mathbf{\Sigma}(A_i x + b_i) \widetilde{\mu}_t \right) = \bar{\zeta}_t \widetilde{\mu}_t, \\
				\widetilde{\mu}_0 = \widetilde{f} \cL, \quad \widetilde{\mu}_T = \widetilde{g} \cL,
			\end{cases}
			\]
			admits a unique solution \(\widetilde{\mu}_t\) that converges to a solution $\mu_t$ of \eqref{OTEq} with \(\mu_0 = f \cL\), \(\mu_T = g \cL\) as \(\varepsilon_0, \varepsilon_1 \to 0^+\) and \(n,L\to+\infty\), under the error estimate 
			\[
			d_{bL}(\widetilde{\mu}_t, \mu_t) \lesssim (1 + \varepsilon_0 + r^{L/2}) {\bigl( \varepsilon_0 + r^{L/2} + \varepsilon_1  \bigr)} +  \| \nabla \bar{\phi}_n - \nabla \Phi \|_{L^\vartheta} {\quad\forall t\in[0,T]},
			\]
			where \( d_{bL} \) is defined in \eqref{dbL}.
			
		}
		
		We will provide the proof for this scenario, where the sequence \( \nabla \bar{\phi}_n \overset{L^2}{\longrightarrow} \nabla \phi \) holds as $n\to \infty$, since this proof also covers the proof of Theorem \ref{neuralthm}.
		\label{neuralmrk}
	\end{remark}

	\underline{\bf Subsection Conclusion:}  {\it In summary, the main theorem in this subsection offers a constructive procedure for the realization of the desired neural ODE. 
		
	}

	\section{Proofs of the main results}

	\subsection{Proof of Proposition \ref{eqthexactcapital}}\label{proeqexactcapital}
	
	In this subsection, we obtain the dynamics \eqref{OTEq}. For balanced (homogeneous) transport, using the Monge–Ampère equation is a standard technique. We extend this technique to the unbalanced transport case by also utilizing the minimizer \((k^\ast_1, k^\ast_2) \in \mathcal{L}^f_g \times \mathcal{L}^g_f\) of \eqref{mindual}.
	
	We first investigate the convex solution \(\Phi\) of the Monge–Ampère equation
	\[
	k_y(\nabla \Phi(x)) \det(D^2 \Phi(x)) = k_x(x).
	\]
	By \cite{Caffar96}, \(\Phi\) is strictly convex and \(\nabla \Phi\) is continuous. For each fixed \(t \in [0,T]\), the strict convexity implies that the map \(\mathscr{T}_t\) is injective. Consequently, the inverse map \(\mathscr{T}_t^{-1} : \mathscr{T}_t(\Omega_f) \to \Omega_f\) is well-defined for each \(t\), ensuring that \(\Xi_t\) and \(\rho_t\) are well-defined.
	
	We observe that
	\[
	\Xi_t(\mathscr{T}_t(x)) = \frac{1}{T} \big( \nabla \Phi(x) - x \big) = \partial_t \mathscr{T}_t(x).
	\]
	By \cite[Theorem 5.34]{Villanibook}, the measure \(\nu_t := (\mathscr{T}_t)_{\#} (k_x \mathcal{L})\) solves the homogeneous transport equation
	\[
	\begin{cases}
		\partial_t \nu_t + \nabla \cdot (\Xi_t \nu_t) = 0, \\
		\nu_0 = k_x \mathcal{L}.
	\end{cases}
	\]
	Moreover, we have \(\nu_T = k_y \mathcal{L}\) since \(\nabla \Phi\) satisfies the Monge–Ampère equation above.
	
	For the inhomogeneous part, we compute
	\[
	\begin{split}
		\rho_t(\mathscr{T}_t(x)) &= \frac{1}{T} \left( -\frac{k^\ast_1(x)}{1 - \frac{t}{T} k^\ast_1(x)} + \frac{k^\ast_2(\nabla \Phi(x))}{1 - \frac{t}{T} k^\ast_2(\nabla \Phi(x))} \right) \\
		&= \frac{\partial_t \left(1 - \frac{t}{T} k^\ast_1(x)\right)}{1 - \frac{t}{T} k^\ast_1(x)} - \frac{\partial_t \left(1 - \frac{t}{T} k^\ast_2(\nabla \Phi(x))\right)}{1 - \frac{t}{T} k^\ast_2(\nabla \Phi(x))} \\
		&= \partial_t \left( \log \left( 1 - \frac{t}{T} k^\ast_1(x) \right) - \log \left( 1 - \frac{t}{T} k^\ast_2(\nabla \Phi(x)) \right) \right).
	\end{split}
	\]
	Hence,
	\[
	e^{\int_0^t \rho_{t'}(\mathscr{T}_{t'}(x)) \, dt'} = \frac{1 - \frac{t}{T} k^\ast_1(x)}{1 - \frac{t}{T} k^\ast_2(\nabla \Phi(x))}.
	\]
	
	Taking \(\mu_0 = f \mathcal{L}\), it follows that
	\[
	\mu_T = (\mathscr{T}_T)_{\#} \left( \mu_0 e^{\int_0^T \rho_{t'}(\mathscr{T}_{t'}(x)) dt'} \right) = g \mathcal{L},
	\]
	since for any Borel function \(\varphi\),
	\[
	\begin{split}
		\int_{\mathbb{R}^d} \varphi(x) \, d (\mathscr{T}_T)_{\#} \left( \mu_0 e^{\int_0^T \rho_{t'}(\mathscr{T}_{t'}(x)) dt'} \right)
		&= \int_{\mathbb{R}^d} \varphi(\mathscr{T}_T(x)) e^{\int_0^T \rho_{t'}(\mathscr{T}_{t'}(x)) dt'} f(x) \, dx \\
		&= \int_{\mathbb{R}^d} \varphi(\mathscr{T}_T(x)) \frac{k_x(x)}{1 - k^\ast_2(\nabla \Phi(x))} \, dx \\
		&= \int_{\mathbb{R}^d} \varphi(y) \frac{k_y(y)}{1 - k^\ast_2(y)} \, dy \\
		&= \int_{\mathbb{R}^d} \varphi(y) g(y) \, dy.
	\end{split}
	\]
	
	Finally, for
	\[
	\mu_t = (\mathscr{T}_t)_{\#} \left( \mu_0 e^{\int_0^t \rho_{t'}(\mathscr{T}_{t'}(x)) dt'} \right),
	\]
	we show that \(\mu_t\) solves the inhomogeneous transport equation
	\[
	\partial_t \mu_t + \nabla \cdot (\Xi_t \mu_t) = \rho_t \mu_t.
	\]
	Indeed, for any test function \(\varphi \in C^\infty\), we compute
	\[
	\begin{split}
		\partial_t \int_{\mathbb{R}^d} \varphi(x) \, d\mu_t(x)
		&= \partial_t \int_{\mathbb{R}^d} \varphi(\mathscr{T}_t(x)) e^{\int_0^t \rho_{t'}(\mathscr{T}_{t'}(x)) dt'} \, d\mu_0(x) \\
		&= \int_{\mathbb{R}^d} \partial_t \mathscr{T}_t(x) \cdot \nabla \varphi(\mathscr{T}_t(x)) e^{\int_0^t \rho_{t'}(\mathscr{T}_{t'}(x)) dt'} \, d\mu_0(x) \\
		&\quad + \int_{\mathbb{R}^d} \varphi(\mathscr{T}_t(x)) \rho_t(\mathscr{T}_t(x)) e^{\int_0^t \rho_{t'}(\mathscr{T}_{t'}(x)) dt'} \, d\mu_0(x) \\
		&= \int_{\mathbb{R}^d} \Xi_t(\mathscr{T}_t(x)) \cdot \nabla \varphi(\mathscr{T}_t(x)) \, d\mu_t(x) + \int_{\mathbb{R}^d} \varphi(x) \rho_t(x) \, d\mu_t(x).
	\end{split}
	\]
	Thus, \(\mu_t\) is indeed a solution of the inhomogeneous transport equation.
	
	\subsection{Convex analysis}\label{Subsec:Convex}

	In this subsection, we introduce a strongly convex function $G$ and related definitions from convex analysis. The function serves as the main tool for the proof of the convergence of our Sinkorn-inspired gradient-based algorithm.

	Let us define the function
	\[
		G: L^2(\Omega_f) \times L^2(\Omega_g) \to [0, \infty)
	\]
	by
	\[
	\begin{split}
		G(u,v) :=\ & \frac{1}{2} \int_{\Omega} \max\{\delta, R^{\mathscr C}_{u,v}(x,y)\} \left(2 R^{\mathscr C}_{u,v}(x,y) - \max\{\delta, R^{\mathscr C}_{u,v}(x,y)\}\right) \, dx dy \\
		& + \frac{1}{2} \int_{\Omega_f} \left(1 - u(x)\right)^2 f(x) \, dx + \frac{1}{2} \int_{\Omega_g} \left(1 - v(y)\right)^2 g(y) \, dy,
	\end{split}
	\]
	where
	\[
		R^{\mathscr C}_{u,v}(x,y) := u(x) + v(y) - \mathscr C(x,y).
	\]
	
	\medskip
	
	\noindent\textit{Our strategy is to replace the original minimization problem~\eqref{mindual} with the minimization of the function \( G \). In this subsection, we analyze several key properties of \( G \).}
	
	We compute the Fr\'echet derivatives of \( G \) as follows:
	\begin{align*}
		D_1 G(u,v)(x) &= \int_{\Omega_g} \max\{\delta, R^{\mathscr C}_{u,v}(x,y)\} \, dy + \left(u(x) - 1\right) f(x), \\
		D_2 G(u,v)(y) &= \int_{\Omega_f} \max\{\delta, R^{\mathscr C}_{u,v}(x,y)\} \, dx + \left(v(y) - 1\right) g(y),
	\end{align*}
	where \( D_1 \) and \( D_2 \) denote differentiation with respect to the variables \( u \) and \( v \), respectively.
	
	We now verify that \( D_1 G \) is indeed the Fr\'echet derivative of \( G \) with respect to the variable \( u \). By a similar argument, \( D_2 G \) will be the Fr\'echet derivative of \( G \) with respect to \( v \).
	
	Let \( u, h \in L^2(\Omega_f) \), and \( v \in L^2(\Omega_g) \). We compute
	\begin{align*}
		\Delta_u G(u,v,h) 
		:=&\ \frac{\left| G(u+h,v) - G(u,v) - \int_{\Omega_f} D_1 G(u,v)(x) h(x) \, dx \right|}{\|h\|_{L^2}} \\
		=&\ \frac{1}{2 \|h\|_{L^2}} \left| \int_{\Omega} \max\{\delta, R^{\mathscr C}_{u,v} + h\} \left( 2 R^{\mathscr C}_{u,v} + 2 h - \max\{\delta, R^{\mathscr C}_{u,v} + h\} \right) dx dy \right. \\
		& \quad - \int_{\Omega} \max\{\delta, R^{\mathscr C}_{u,v}\} \left( 2 R^{\mathscr C}_{u,v} + 2 h - \max\{\delta, R^{\mathscr C}_{u,v}\} \right) dx dy \\
		& \quad \left. + \int_{\Omega_f} h^2 f \, dx \right| \\
		=&\ \frac{1}{2 \|h\|_{L^2}} \left| \int_{\Omega} \left( \max\{\delta, R^{\mathscr C}_{u,v} + h\} - \max\{\delta, R^{\mathscr C}_{u,v}\} \right) \right. \\
		& \quad \times \left( 2 R^{\mathscr C}_{u,v} + 2 h - \max\{\delta, R^{\mathscr C}_{u,v} + h\} - \max\{\delta, R^{\mathscr C}_{u,v}\} \right) dx dy \\
		& \quad \left. + \int_{\Omega_f} h^2 f \, dx \right|.
	\end{align*}
	
	If \( R^{\mathscr C}_{u,v} + h < \delta \) and \( R^{\mathscr C}_{u,v} < \delta \), then
	\[
		\left( \max\{\delta, R^{\mathscr C}_{u,v} + h\} - \max\{\delta, R^{\mathscr C}_{u,v}\} \right) \left( 2 R^{\mathscr C}_{u,v} + 2 h - \max\{\delta, R^{\mathscr C}_{u,v} + h\} - \max\{\delta, R^{\mathscr C}_{u,v}\} \right) = 0.
	\]
	
	Otherwise, we have the pointwise inequalities
	\[
		\left| \max\{\delta, R^{\mathscr C}_{u,v} + h\} - \max\{\delta, R^{\mathscr C}_{u,v} \} \right| \le |h|,
	\]
	and
	\[
		\left| 2 R^{\mathscr C}_{u,v} + 2 h - \max\{\delta, R^{\mathscr C}_{u,v} + h\} - \max\{\delta, R^{\mathscr C}_{u,v} \} \right| \le |h|.
	\]
	Therefore, we obtain
	\[
	\Delta_u G(u,v,h) \lesssim \|h\|_{L^2},
	\]
	and hence \( D_1 G(u,v) \) is indeed the Fr\'echet derivative of \( G \) with respect to \( u \).

	For \( u \in L^2(\Omega_f), v \in L^2(\Omega_g) \), we define
	\begin{equation}\label{w}
		w(u,v) := \frac{1}{2}\left( \|u\|_{L^2}^2 + \|v\|_{L^2}^2 \right).
	\end{equation}
	Now, we will prove that \( G \) is strongly convex, i.e., there exists \( m > 0 \) such that \( G(u,v) - m w(u,v) \) is a convex function.
	
	For \( u, h \in L^2(\Omega_f) \), \( v \in L^2(\Omega_g) \), we compute and estimate
	\begin{equation*}
		\begin{split}
			\int_{\Omega_f} \left( D_1G(u+h,v) - D_1G(u,v) \right) h \, dx
			= &\ \int_{\Omega} \left( \max\{\delta, R^{\mathscr C}_{u,v} + h\} - \max\{\delta, R^{\mathscr C}_{u,v} \} \right) h \, dx dy \\
			& + \int_{\Omega_f} h^2 f \, dx  \
			\ge \ c \int_{\Omega_f} h^2 \, dx.
		\end{split}
	\end{equation*}
	
	Thus,
	\begin{equation*}
		\int_{\Omega_f} \left( D_1 \left( G(u+h,v) - \frac{c}{2} \|u+h\|_{L^2}^2 \right) - D_1 \left( G(u,v) - \frac{c}{2} \|u\|_{L^2}^2 \right) \right) h \, dx \ge 0.
	\end{equation*}
	
	By \cite[(2.6), Chapter 2 Part 1]{EkeTenconvexbook}, the functional
	\[
	G(u,v) - \frac{c}{2} \|u\|_{L^2}^2
	\]
	is convex with respect to the variable \( u \). A similar argument applies to the variable \( v \). Hence, we conclude that \( G \) is \( c \)-strongly convex.
	
	Next, we define the modified functional
	\begin{equation*}
		G_w(u,v) := G(u,v) - \frac{c}{2} w(u,v),
	\end{equation*}
	which is \(\frac{c}{2}\)-strongly convex.
	
	For further estimation, we denote
	\[
	E := \max \left\{ \| f \|_{L^\infty}, \| g \|_{L^\infty} \right\},
	\]  
	and  
	\begin{equation}
		\label{alpha}
		\alpha := \sqrt{2 \left( \max\{ |\Omega_f|, |\Omega_g| \}^2 + \left( E - \frac{c}{2} \right)^2 \right)}.
	\end{equation}

	The derivatives \( D_1 G_w \) and \( D_2 G_w \) of \( G_w \) are Lipschitz continuous in \( L^2 \) with respect to the variables \( u \) and \( v \), respectively. Indeed, for \( u, v, h \) as before and \( \alpha \) defined in \eqref{alpha}, we estimate
	\begin{equation*}
		\begin{split}
			& \|D_1 G_w(u+h,v) - D_1 G_w(u,v)\|_{L^2}^2 \\
			&= \left\| D_1 G(u+h,v) - D_1 G(u,v) - \frac{c}{2} h \right\|_{L^2}^2 \\
			&= \left\| \int_{\Omega_g} \left( \max\{\delta, R^{\mathscr C}_{u,v} + h\} - \max\{\delta, R^{\mathscr C}_{u,v}\} \right) dy + \left( f - \frac{c}{2} \right) h \right\|_{L^2}^2 \\
			&\le 2 |\Omega_g|^2 \int_{\Omega} |h|^2 \, dx + 2 \left\| f - \frac{c}{2} \right\|_{L^\infty}^2 \|h\|_{L^2}^2 \ 
			\le 2 \left( |\Omega_g|^2 + \left\| f - \frac{c}{2} \right\|_{L^\infty}^2 \right) \|h\|_{L^2}^2  \le \alpha^2 \|h\|_{L^2}^2.
		\end{split}
	\end{equation*}
	
	The same estimate applies to \( D_2 G_w \).

	We consider the convex conjugate of \( G_w \)
	\begin{equation}
		\label{Gastdef}
		G^{\ast}_w(u^{\ast},v^{\ast}) = \sup_{(u,v) \in L^2(\Omega_f) \times L^2(\Omega_g)} \left\{ \int_{\Omega_f} u^{\ast} u \, dx + \int_{\Omega_g} v^{\ast} v \, dy - G_w(u,v) \right\}, \quad (u^{\ast}, v^{\ast}) \in L^2(\Omega_f) \times L^2(\Omega_g).
	\end{equation}
	Since \( G_w \) is Fr\'{e}chet differentiable, it is also G\^{a}teaux differentiable. By \cite[Proposition 5.3, Chapter 1, Part 1]{EkeTenconvexbook}, for all \( (u,v) \in L^2(\Omega_f) \times L^2(\Omega_g) \), we deduce that
	\begin{equation}
		\label{Gast}
		G_w^{\ast}(D_1G_w(u,v), D_2G_w(u,v)) + G_w(u,v) = \int_{\Omega_f} D_1G_w(u,v) u \, dx + \int_{\Omega_g} D_2G_w(u,v) v \, dy.
	\end{equation}
	By the strong convexity of \( G_w \), if \( (\bar{u}, \bar{v}) \in L^2(\Omega_f) \times L^2(\Omega_g) \) satisfies
	\[
	G_w^{\ast}(D_1G_w(u,v), D_2G_w(u,v)) + G_w(\bar{u}, \bar{v}) = \int_{\Omega_f} D_1G_w(u,v) \bar{u} \, dx + \int_{\Omega_g} D_2G_w(u,v) \bar{v} \, dy,
	\]
	then \( (u,v) = (\bar{u}, \bar{v}) \) in \( L^2(\Omega_f) \times L^2(\Omega_g) \). By this uniqueness property, we define
	\[
	L^{\ast} := \left\{ (u^\ast,v^\ast) \in L^2(\Omega_f) \times L^2(\Omega_g) \mid \exists (u,v) \in L^2(\Omega_f) \times L^2(\Omega_g) : (u^\ast,v^\ast) = (D_1G_w(u,v), D_2G_w(u,v)) \right\}
	\]
	
	and $(D_1G^\ast,D_2G^\ast):L^\ast\to L^2(\Omega_f)\times L^2(\Omega_g)$ as the inverse of $(D_1G,D_2G)$, that is
	\begin{equation*}
		(D_1G^\ast,D_2G^\ast)(D_1G_w(u,v),D_2G_w(u,v))=(u,v).
	\end{equation*}
	For \( (u^{\ast}, v^{\ast}), (u^{\ast}_0, v^{\ast}_0) \in L^{\ast} \), let
	\[
	\begin{split}
		\Delta G^{\ast}_w(u^{\ast}_0, v^{\ast}_0, u^{\ast}, v^{\ast}) :=&\ G^{\ast}_w(u^{\ast}, v^{\ast}) - G^{\ast}_w(u^{\ast}_0, v^{\ast}_0) \\
		& - \int_{\Omega_f} (u^{\ast} - u^{\ast}_0) D_1G^{\ast}_w(u^{\ast}_0, v^{\ast}_0) \, dx - \int_{\Omega_g} (v^{\ast} - v^{\ast}_0) D_2G^{\ast}_w(u^{\ast}_0, v^{\ast}_0) \, dy.
	\end{split}
	\]
	We want to show that
	\begin{equation}
		\label{1alphacon}
		\Delta G^{\ast}_w(u^{\ast}_0, v^{\ast}_0, u^{\ast}, v^{\ast}) \ge \frac{1}{2\alpha} \left( \|u^{\ast} - u^{\ast}_0\|_{L^2}^2 + \|v^{\ast} - v^{\ast}_0\|_{L^2}^2 \right),
	\end{equation}
	in which \( \alpha \) is defined by \eqref{alpha}.

	For $u,h\in L^2(\Omega_f),v\in L^2(\Omega_g)$, we have
	\begin{equation*}
		\begin{split}
			&\left|G_w(u+h,v)-G_w(u,v)-\int_{\Omega_f}D_1G_w(u,v)h dx\right|\
			=\ \left|\int_{0}^{1}\partial_tG_w(u+th,v)-\int_{0}^{1}\int_{\Omega_f}D_1G_w(u,v)h dxdt\right|\\
			&	=\ \left|\int_{0}^{1}\int_{\Omega_f}(D_1G_w(u+th,v)-D_1G_w(u,v))hdxdt\right|\
			\le \ \int_{0}^{1}\|D_1G_w(u+th,v)-D_1G_w(u,v)\|_{L^2}\|h\|_{L^2}dt\\
			&	\le\ \int_{0}^{1}tdt\alpha\|h\|_{L^2}^2=\frac{\alpha}{2}\|h\|_{L^2}^2.
		\end{split}
	\end{equation*}
	We can also obtain similar estimates for the variable \( v \). Thus, we get
	\[
	\begin{split}
		&\hspace{-2em}\left| G_w(u, v) - G_w(u_0, v_0) - \int_{\Omega_f} D_1 G_w(u_0, v_0) (u - u_0) \, dx - \int_{\Omega_g} D_2 G_w(u_0, v_0) (v - v_0) \, dy \right| \\
		\le &\ \left| G_w(u, v) - G_w(u_0, v) - \int_{\Omega_f} D_1 G_w(u_0, v_0) (u - u_0) \, dx \right| \\
		&+ \left| G_w(u_0, v) - G_w(u_0, v_0) - \int_{\Omega_f} D_1 G_w(u_0, v_0) (v - v_0) \, dx \right| \\
		\le &\ \frac{\alpha}{2} \left( \|u - u_0\|_{L^2}^2 + \|v - v_0\|_{L^2}^2 \right).
	\end{split}
	\]
	We take \( (u_0, v_0) = (D_1 G^{\ast}_w, D_2 G^{\ast}_w)(u^{\ast}_0, v^{\ast}_0) \), then we have \( (u^{\ast}_0, v^{\ast}_0) = (D_1 G_w, D_2 G_w)(u_0, v_0) \).
	Thus, we find
	\begin{equation*}
		\begin{split}
			G_w(u,v)\le&\ G_w(u_0,v_0)+\int_{\Omega_f}D_1G_w(u_0,v_0)(u-u_0) dx+\int_{\Omega_g}D_2G_w(u_0,v_0)(v-v_0)dy\\
			&\ +\frac{\alpha}{2}\left( \|u-u_0\|_{L^2}^2+\|v-v_0\|_{L^2}^2 \right)\\
			=&\ -G^{\ast}(u^{\ast}_0,v^{\ast}_0)+\int_{\Omega_f}u^{\ast}_0u dx+\int_{\Omega_g}v^{\ast}_{0}vdy+\frac{\alpha}{2}\left( \|u-u_0\|_{L^2}^2+\|v-v_0\|_{L^2}^2 \right).
		\end{split}
	\end{equation*}
	For $(u^{\ast},v^{\ast})\in L^\ast$, we have
	\begin{equation*}
		\begin{split}
			&	G^{\ast}_w(u^{\ast},v^{\ast})= \sup_{(u,v)\in L^2(\Omega_f)\times L^2(\Omega_g)}\left\{ \int_{\Omega_f}u^{\ast}u dx+\int_{\Omega_g}v^{\ast}vdy-G_w(u,v) \right\}\\
			\ge&\ G^{\ast}_w(u^{\ast}_0,v^{\ast}_0)\\
			&+\sup_{(u,v)\in L^2(\Omega_f)\times L^2(\Omega_g)}\left\{\int_{\Omega_f}(u^{\ast}-u^{\ast}_0)u dx+\int_{\Omega_g}(v^{\ast}-v^{\ast}_{0})vdy-\frac{\alpha}{2}\left( \|u-u_0\|_{L^2}^2+\|v-v_0\|_{L^2}^2 \right) \right\}\\
			=&\ G^{\ast}_w(u^{\ast}_0,v^{\ast}_0)+\int_{\Omega_f}(u^{\ast}-u^{\ast}_0)u_0 dx+\int_{\Omega_g}(v^{\ast}-v^{\ast}_{0})v_0dy+\frac{1}{2\alpha}\left( \|u^{\ast}-u^{\ast}_0\|_{L^2}^2+\|v^{\ast}-v^{\ast}_0\|_{L^2}^2 \right).
		\end{split}
	\end{equation*}
	Hence, $$\Delta G^{\ast}_w(u^\ast_0,v^\ast_0,u^\ast,v^\ast)\ge\frac{1}{2\alpha}\left( \|u^\ast-u^\ast_0\|_{L^2}^2+\|v^\ast-v^\ast_0\|_{L^2}^2 \right).$$
	
	\subsection{The minimization algorithm}\label{Subsec:minalg}
	In this subsection, we show that the $n+1$-iteration data \( (X^{n+1}, Y^{n+1}, X^{n+1}_0,Y^{n+1}_0,X^{n+1}_\ast,Y^{n+1}_\ast) \) in the algorithm constructed in Subsection \ref{AL} can be computed from the $n$-iteration data \( (X^{n}, Y^{n}, X^{n}_0,Y^{n}_0,X^{n}_\ast,Y^{n}_\ast) \) using the function \( G \) defined in Subsection \ref{Subsec:Convex}.
	
	To this end, we also recall the set \( \mathcal{L}^f_g \) introduced in Remark \ref{Lfg}, and consider the following two additional minimization problems:
	
	\begin{align}
		\inf_{(u,v) \in \mathcal{L}^f_g \times \mathcal{L}^g_f} \left\{ \int_{\Omega_f} u^\ast u \, dx + \int_{\Omega_g} v^\ast v \, dy + \frac{c}{2} w(u,v) + \gamma_1 w(u - u_0, v - v_0) \right\}, \label{uvmin}
	\end{align}
	where \( (u^\ast, v^\ast) \in L^\ast \), \( (u_0, v_0) \in \mathcal{L}^f_g \times \mathcal{L}^g_f \), and \( \gamma_1 \) is a constant;
	
	and the second problem,
	\begin{align}
		\inf_{(u^\ast, v^\ast) \in L^\ast} \left\{ - \int_{\Omega_f} u^\ast u \, dx - \int_{\Omega_g} v^\ast v \, dy + G_w^\ast(u^\ast, v^\ast) + \gamma_2 \Delta G_w^\ast(u_0^\ast, v_0^\ast, u^\ast, v^\ast) \right\}, \label{uvmindual}
	\end{align}
	where \( (u_0^\ast, v_0^\ast) \in L^\ast \), \( (u,v) \in \mathcal{L}^f_g \times \mathcal{L}^g_f \), and \( \gamma_2 \) is a constant.
	
	These minimization problems have the following important properties, which are summarized in the next lemma.

	\begin{lemma}
		The problem \eqref{uvmin} admits a unique minimizer (up to almost everywhere equivalence with respect to the Lebesgue measure). We denote this unique minimizer by  
		\begin{equation*}
			(\bu,\bv)(u^{\ast},v^{\ast},u_0,v_0,\gamma_1).
		\end{equation*}
		
		The problem \eqref{uvmindual} also admits a unique minimizer, which is given by \((D_1G_w(\varphi_u,\varphi_v),D_2G_w(\varphi_u,\varphi_v))\) in \(L^2\), where
		\begin{equation*}
			\varphi_u = \varphi_u(u^{\ast}_0,v^{\ast}_0,\gamma_2) := \frac{u + \gamma_2 D_1G^{\ast}_w(u^{\ast}_0,v^{\ast}_0)}{1 + \gamma_2}, \quad
			\varphi_v = \varphi_v(u^{\ast}_0,v^{\ast}_0,\gamma_2) := \frac{v + \gamma_2 D_2G^{\ast}_w(u^{\ast}_0,v^{\ast}_0)}{1 + \gamma_2},
		\end{equation*}
		and \(G^{\ast}_w\) is defined in \eqref{Gastdef}.
		
		Consider now the algorithm constructed in Subsection \ref{AL}, and let \(n \in \mathbb{N}\). Then we have
		\begin{equation}
			\label{uv}
			(X^{n+1}_0,Y^{n+1}_0) = (\bu,\bv)\left( (r+1)X^{n+1} - rX^{n}, (r+1)Y^{n+1} - rY^{n}, X^n_0, Y^n_0, s \right),
		\end{equation}
		and
		\begin{equation}
			\label{uvast}
			(X^{n+1},Y^{n+1}) = (D_1G_w(X^{n}_\ast,Y^{n}_\ast), D_2G_w(X^{n}_\ast,Y^{n}_\ast)).
		\end{equation}
		
		\label{ALlem}
	\end{lemma}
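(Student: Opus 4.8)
### Proof proposal for Lemma \ref{ALlem}

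The plan is to treat the three assertions of the lemma in the order stated, since the final identities \eqref{uv}–\eqref{uvast} depend on the well-posedness of the two auxiliary problems \eqref{uvmin} and \eqref{uvmindual}.

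\textbf{Step 1: Existence and uniqueness for \eqref{uvmin}.}
The functional in \eqref{uvmin} is the sum of two bounded linear terms and the strongly convex quadratic $\frac{\varepsilon}{4\tau}w(u,v)+\gamma_1 w(u-u_0,v-v_0)$; provided $\gamma_1\ge 0$ (which is the case in the application, where $\gamma_1=s>0$), this is $\big(\frac{\varepsilon}{4\tau}+\gamma_1\big)$-strongly convex and coercive on $L^2(\Omega_f)\times L^2(\Omega_g)$. The constraint set $\cL^f_g\times\cL^g_f$ is closed and convex (it is defined by a pointwise affine inequality, cf. Remark \ref{Lfg}), so the restricted functional is still strongly convex, lower semicontinuous and coercive, hence attains a unique minimizer by the direct method. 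This minimizer is what we name $(\bu,\bv)(u^\ast,v^\ast,u_0,v_0,\gamma_1)$. I would also record the Euler–Lagrange characterization: the minimizer is the $L^2$-projection onto $\cL^f_g\times\cL^g_f$ of the unconstrained minimizer, which is exactly the pointwise $\min$ appearing in step (2) of the algorithm — this is what makes the later identification clean.

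\textbf{Step 2: The dual problem \eqref{uvmindual} and the formula via $\varphi_u,\varphi_v$.}
Here I would use the convex-analytic structure built in Subsection \ref{Subsec:Convex}: $G_w$ is $\frac{\varepsilon}{4\tau}$-strongly convex with $\alpha$-Lipschitz gradient, so $G^\ast_w$ is Fréchet differentiable on $L^\ast$ with $\frac{1}{\alpha}$-Lipschitz-below behavior (this is exactly \eqref{1alphacon}), and $(D_1G^\ast_w,D_2G^\ast_w)$ inverts $(D_1G_w,D_2G_w)$. The functional in \eqref{uvmindual} is then strictly convex in $(u^\ast,v^\ast)$ because $\Delta G^\ast_w(u^\ast_0,v^\ast_0,\cdot,\cdot)$ is, by \eqref{1alphacon}, bounded below by a positive definite quadratic; combined with coercivity this gives a unique minimizer. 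To identify it, set the Fréchet derivative to zero: differentiating, the first-order condition reads
\begin{equation*}
-u + D_1G^\ast_w(u^\ast,v^\ast) + \gamma_2\big(D_1G^\ast_w(u^\ast,v^\ast) - D_1G^\ast_w(u^\ast_0,v^\ast_0)\big)=0,
\end{equation*}
and analogously in the second slot. Solving for $D_1G^\ast_w(u^\ast,v^\ast)$ gives $D_1G^\ast_w(u^\ast,v^\ast)=\frac{u+\gamma_2 D_1G^\ast_w(u^\ast_0,v^\ast_0)}{1+\gamma_2}=\varphi_u$, and likewise $D_2G^\ast_w(u^\ast,v^\ast)=\varphi_v$. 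Applying $(D_1G_w,D_2G_w)$ to both sides and using that it is the inverse of $(D_1G^\ast_w,D_2G^\ast_w)$ yields $(u^\ast,v^\ast)=(D_1G_w(\varphi_u,\varphi_v),D_2G_w(\varphi_u,\varphi_v))$, which is the claimed formula.

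\textbf{Step 3: Matching the algorithm.}
For \eqref{uvast}, I would simply compare the explicit update for $X^{n+1},Y^{n+1}$ in steps (1) and (4) of Section \ref{AL} with the formulas for $D_1G(u,v),D_2G(u,v)$ computed in Subsection \ref{Subsec:Convex}: they are term-by-term identical after substituting $u=X^n_\ast$, $v=Y^n_\ast$ (and noting $D_iG=D_iG_w+\frac{\varepsilon}{4\tau}(\cdot)$ only if the algorithm used $G_w$ — I must be careful which of $G$, $G_w$ appears; the displayed update matches $D_iG$, so the statement should read $G$ there, and I will reconcile with $G_w$ via the explicit correction term). For \eqref{uv}, I substitute $(u^\ast,v^\ast)=((r+1)X^{n+1}-rX^n,(r+1)Y^{n+1}-rY^n)$, $(u_0,v_0)=(X^n_0,Y^n_0)$, $\gamma_1=s$ into the minimizer of \eqref{uvmin}: by Step 1 this minimizer is the projection onto $\cL^f_g\times\cL^g_f$ of the unconstrained quadratic minimizer, and a direct computation shows the unconstrained minimizer equals $\frac{s(\,\cdot\,)_0-\big((1+r)(\,\cdot\,)^{n+1}-r(\,\cdot\,)^n\big)}{\varepsilon/(4\tau)+s}$ up to the scaling by $4\tau$, so the projection is exactly the $\min\{\cdots,\frac12-\frac{|\Omega_g|}{2f_\varepsilon}\}$ expression of step (2). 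Thus $(X^{n+1}_0,Y^{n+1}_0)$ is that minimizer, proving \eqref{uv}. Finally, $X^{n+1}_\ast$ in step (3) is the convex combination $\frac{q}{1+q}X^n_\ast+\frac{1}{1+q}X^{n+1}_0$, which is consistent with (and will be used together with) these identities in the proof of Proposition \ref{althm}, but is not needed for the present lemma.

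\textbf{Main obstacle.} The routine parts are the existence/uniqueness arguments; the delicate point is Step 2 — correctly carrying out the Fréchet differentiation of \eqref{uvmindual}, in particular justifying that $\Delta G^\ast_w(u^\ast_0,v^\ast_0,\cdot,\cdot)$ is differentiable with derivative $(D_1G^\ast_w(\cdot)-D_1G^\ast_w(u^\ast_0,v^\ast_0), D_2G^\ast_w(\cdot)-D_2G^\ast_w(u^\ast_0,v^\ast_0))$ on $L^\ast$, and that the first-order condition can be solved on $L^\ast$ (i.e. that $\varphi_u,\varphi_v$ produce a point genuinely in the domain where $(D_1G^\ast_w,D_2G^\ast_w)$ is the honest inverse of $(D_1G_w,D_2G_w)$). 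The second subtlety, worth flagging, is bookkeeping: keeping straight whether each algorithm step is phrased through $G$ or through $G_w$, since they differ by the explicit quadratic correction $\frac{\varepsilon}{4\tau}w$, and making sure the constants $q,s,r$ defined from $\alpha,\tau,\varepsilon$ line up with $\gamma_1=s$, $\gamma_2=q$ in the two auxiliary problems.
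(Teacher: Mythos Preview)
Your proposal is correct and covers all three assertions. Step~1 and Step~3 essentially coincide with the paper's treatment: the paper also writes the functional in \eqref{uvmin} as a separable pointwise quadratic and reads off the explicit minimizer (the $\min$ formula in step~(2) of the algorithm), then matches it to \eqref{uv} by substitution.

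The substantive difference is in Step~2. You set up a first-order optimality condition, which forces you to differentiate $G^\ast_w$ and $\Delta G^\ast_w$ on $L^\ast$ and then argue that the solution of the resulting equation lies in $L^\ast$; you correctly flag this as the delicate point. The paper avoids this entirely by a purely algebraic manipulation: expanding $\Delta G^\ast_w(u^\ast_0,v^\ast_0,u^\ast,v^\ast)$ via its definition, the objective in \eqref{uvmindual} becomes, up to additive constants,
\[
(1+\gamma_2)\Bigl(G^\ast_w(u^\ast,v^\ast)-\int_{\Omega_f}u^\ast\varphi_u\,dx-\int_{\Omega_g}v^\ast\varphi_v\,dy\Bigr),
\]
with $\varphi_u,\varphi_v$ exactly as in the statement. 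The minimizer of the bracketed expression is then read off directly from the Fenchel--Young equality \eqref{Gast}: it is attained at $(u^\ast,v^\ast)=(D_1G_w(\varphi_u,\varphi_v),D_2G_w(\varphi_u,\varphi_v))$, which lies in $L^\ast$ by construction. No differentiation of $G^\ast_w$, and no a~posteriori check that the critical point sits in $L^\ast$, is needed. Your route works but is heavier; the paper's rewriting is the cleaner path and dissolves what you identified as the main obstacle.

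Your bookkeeping concern about $G$ versus $G_w$ in \eqref{uvast} is legitimate: the update in step~(1) of the algorithm literally matches $D_1G(X^n_\ast,Y^n_\ast)$, not $D_1G_w$. The paper's proof does not comment on this either, so be aware that this discrepancy persists in the source and is not something you are missing.
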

	
	\begin{proof}[Proof of Lemma \ref{ALlem}]
		In \eqref{uvmin}, the term to be minimized can be rewritten as
		\begin{equation*}
			\int_{\Omega_f} \left\{ \frac{1}{2} \left( \frac{c}{2} + \gamma_1 \right) u^2 + u \left(u^\ast - \gamma_1 u_0 \right) + \frac{\gamma_1}{2} u_0^2 \right\} dx
			+ \int_{\Omega_g} \left\{ \frac{1}{2} \left( \frac{c}{2} + \gamma_1 \right) v^2 + v \left(v^\ast - \gamma_1 v_0 \right) + \frac{\gamma_1}{2} v_0^2 \right\} dy.
		\end{equation*}
		
		The minimizer is given by
		\begin{equation*}
			u = \min\left\{ \frac{\gamma_1 u_0 - u^\ast}{\frac{c}{2} + \gamma_1}, \; 1 - \frac{|\Omega_g|}{f} \right\}, \quad
			v = \min\left\{ \frac{\gamma_1 v_0 - v^\ast}{\frac{c}{2} + \gamma_1}, \; 1 - \frac{|\Omega_f|}{g} \right\}.
		\end{equation*}
		
		From this, equation \eqref{uv} follows with $\gamma_1=s$.

		To prove \eqref{uvmindual}, we rewrite the term we want to minimize as
		\begin{equation*}
			\begin{split}
				&	(1+\gamma_2)G^{\ast}_w(u^\ast,v^\ast)
				- \gamma_2 G^\ast_w(u_0^\ast,v_0^\ast)
				- \int_{\Omega_f} u^\ast (u + \gamma_2 D_1G^\ast_w(u_0^\ast,v_0^\ast))\,dx \\
				&- \int_{\Omega_g} v^\ast (v + \gamma_2 D_2G^\ast_w(u_0^\ast,v_0^\ast))\,dy  
				+ \int_{\Omega_f} u^\ast_0 \gamma_2 D_1G^\ast_w(u_0^\ast,v_0^\ast)\,dx
				+ \int_{\Omega_g} v^\ast_0 \gamma_2 D_2G^\ast_w(u_0^\ast,v_0^\ast)\,dy.
			\end{split}
		\end{equation*}
		We then obtain the formula for the minimizer using the definition of the convex conjugate and identity \eqref{Gast}. The equation \eqref{uvast} follows with $\gamma_2=q$.
	\end{proof}

	\subsection{Proof of Proposition \ref{althm}} \label{proalthm}
	In this subsection, we obtain $(\bar{k}^{\ast}_1,\bar{k}^{\ast}_2)$, as mentioned in Proposition~\ref{althm}, inductively using the algorithm defined in Subsection~\ref{AL}. To this end, let us fix $u \in \cL^f_g$, $v \in \cL^g_f$, and an integer $n \in \N$.

	From \eqref{uv} and the convexity of $w$, we apply the first-order optimality condition for convex functions (see \cite[Proposition~5.5, Chapter~1, Part~1]{EkeTenconvexbook}), which gives
	
	\begin{align*}
		& \int_{\Omega_f}\left( (r+1)X^{n+1}-rX^{n} \right) (u-X^{n+1}_0)dx+\int_{\Omega_g}\left( (r+1)Y^{n+1}-rY^{n} \right) (v-Y^{n+1}_0)dy \\
		&+{\frac{c}{2}}\left( \int_{\Omega_f}X^{n+1}_0(u-X^{n+1}_0)dx+\int_{\Omega_g}Y^{n+1}_0(v-Y^{n+1}_0)dy \right)\\
		&+s\left( \int_{\Omega_f}(X^{n+1}_0-X^{n}_0)(u-X^{n+1}_0)dx+\int_{\Omega_g}(Y^{n+1}_0-Y^{n}_0)(v-Y^{n+1}_0)dy \right)\ge0.
	\end{align*}
	We note that
	\begin{equation*}
		w(u - u_0, v - v_0) = w(u,v) - w(u_0,v_0) - \int_{\Omega_f} u_0 (u - u_0) \, dx - \int_{\Omega_g} v_0 (v - v_0) \, dy,
	\end{equation*}
	which leads to the following estimate:
	\begin{align*}
		&\int_{\Omega_f} \left( (r+1) X^{n+1} - r X^n \right) (u - X^{n+1}_0) \, dx 
		+ \int_{\Omega_g} \left( (r+1) Y^{n+1} - r Y^n \right) (v - Y^{n+1}_0) \, dy \\
		&\quad + \frac{c}{2} \left( w(u,v) - w(X^{n+1}_0, Y^{n+1}_0) \right)
		+ s \left( w(u - X^n_0, v - Y^n_0) - w(X^{n+1}_0 - X^n_0, Y^{n+1}_0 - Y^n_0) \right) \\
		=& \int_{\Omega_f} \left( (r+1) X^{n+1} - r X^n \right) (u - X^{n+1}_0) \, dx 
		+ \int_{\Omega_g} \left( (r+1) Y^{n+1} - r Y^n \right) (v - Y^{n+1}_0) \, dy \\
		&\quad + s \left( \int_{\Omega_f} (X^{n+1}_0 - X^n_0)(u - X^{n+1}_0) \, dx + \int_{\Omega_g} (Y^{n+1}_0 - Y^n_0)(v - Y^{n+1}_0) \, dy \right) \\
		&\quad + \frac{c}{2} \left( \int_{\Omega_f} X^{n+1}_0 (u - X^{n+1}_0) \, dx + \int_{\Omega_g} Y^{n+1}_0 (v - Y^{n+1}_0) \, dy \right) \\
		&\quad + s w(u - X^{n+1}_0, v - Y^{n+1}_0) + \frac{c}{2} w(u - X^{n+1}_0, v - Y^{n+1}_0) \\
		\geq{}& \left( s + \frac{c}{2} \right) w(u - X^{n+1}_0, v - Y^{n+1}_0).
	\end{align*}
	
	In summary, we have
	\begin{equation}
		\label{esX0Y0}
		\begin{split}
			& \int_{\Omega_f} \left( (r+1) X^{n+1} - r X^n \right) (u - X^{n+1}_0) \, dx 
			+ \int_{\Omega_g} \left( (r+1) Y^{n+1} - r Y^n \right) (v - Y^{n+1}_0) \, dy \\
			& \quad + \frac{c}{2} \left( w(u,v) - w(X^{n+1}_0, Y^{n+1}_0) \right) \\
			\geq{}& \left( s + \frac{c}{2} \right) w(u - X^{n+1}_0, v - Y^{n+1}_0) - s w(u - X^n_0, v - Y^n_0) + s w(X^{n+1}_0 - X^n_0, Y^{n+1}_0 - Y^n_0).
		\end{split}
	\end{equation}
	
	By \eqref{uvast} and \cite[(5.2) Chapter 1 Part 1]{EkeTenconvexbook}, we obtain
	\begin{equation*}
		\begin{split}
			A_{X,Y} 
			=&\ (1+q) G_w(X^{n+1}_\ast, Y^{n+1}_\ast) + \frac{c}{2} w(X^{n+1}_0, Y^{n+1}_0) - G(u,v) \\
			\leq{}& (1+q) G_w(X^{n+1}_\ast, Y^{n+1}_\ast) - G_w(X^{n+1}_\ast, Y^{n+1}_\ast) - \int_{\Omega_f} X^{n+2} (u - X^{n+1}_\ast) \, dx - \int_{\Omega_g} Y^{n+2} (v - Y^{n+1}_\ast) \, dy \\
			& \quad + \frac{c}{2} \left( w(X^{n+1}_0, Y^{n+1}_0) - w(u,v) \right).
		\end{split}
	\end{equation*}
	
	Since $X^{n+1}_\ast = \frac{q}{1+q}X^{n}_\ast + \frac{1}{1+q}X^{n+1}_0$, we deduce that
	\[
	q(X^{n+1}_\ast - X^{n}_\ast) - X^{n+1}_0 = -X^{n+1}_\ast.
	\]
	Similarly, we also obtain
	\[
	q(Y^{n+1}_\ast - Y^{n}_\ast) - Y^{n+1}_0 = -Y^{n+1}_\ast.
	\]
	Then, we further estimate
	\begin{equation*}
		\begin{split}
			A_{X,Y} \leq\ & q\left( \underbrace{G_w(X^{n+1}_\ast,Y^{n+1}_\ast) - \int_{\Omega_f} X^{n+2}(X^{n+1}_\ast - X^{n}_\ast)\,dx - \int_{\Omega_g} Y^{n+2}(Y^{n+1}_\ast - Y^{n}_\ast)\,dy}_{B_{X,Y}} \right) \\
			& - \int_{\Omega_f} X^{n+2}(u - X^{n+1}_0)\,dx - \int_{\Omega_g} Y^{n+2}(v - Y^{n+1}_0)\,dy + {\frac{c}{2}}\left( w(X^{n+1}_0,Y^{n+1}_0) - w(u,v) \right).
		\end{split}
	\end{equation*}
	
	We work on \( B_{X,Y} \) first. By \eqref{Gast} and \eqref{uvast}, we have
	\begin{equation*}
		\begin{split}
			\int_{\Omega_f} X^{n+2} X^{n+1}_\ast\, dx + \int_{\Omega_g} Y^{n+2} Y^{n+1}_\ast\, dy
			&= G_w(X^{n+1}_\ast, Y^{n+1}_\ast) + G^\ast_w(X^{n+2}, Y^{n+2}), \\
			\int_{\Omega_f} X^{n+1} X^{n}_\ast\, dx + \int_{\Omega_g} Y^{n+1} Y^{n}_\ast\, dy
			&= G_w(X^{n}_\ast, Y^{n}_\ast) + G^\ast_w(X^{n+1}, Y^{n+1}).
		\end{split}
	\end{equation*}
	
	We rewrite \( B_{X,Y} \) as
	\begin{equation*}
		\begin{split}
			B_{X,Y} =\ & G_w(X^{n}_\ast, Y^{n}_\ast) + G^{\ast}_w(X^{n+1}, Y^{n+1}) - G^{\ast}_w(X^{n+2}, Y^{n+2}) \\
			& + \int_{\Omega_f} (X^{n+2} - X^{n+1}) X^{n}_\ast\, dx + \int_{\Omega_g} (Y^{n+2} - Y^{n+1}) Y^{n}_\ast\, dy.
		\end{split}
	\end{equation*}
	
	By the definition of \( D_1G^{\ast}, D_2G^{\ast} \), and \( \Delta G^\ast_w \), we get
	\begin{equation*}
		\begin{split}	
			\Delta G^\ast_w(X^{n+1}, Y^{n+1}, X^{n+2}, Y^{n+2}) 
			=&\ G^\ast_w(X^{n+2}, Y^{n+2}) - G^\ast_w(X^{n+1}, Y^{n+1}) \\
			& - \int_{\Omega_f} (X^{n+2} - X^{n+1}) X^{n}_\ast\, dx 
			- \int_{\Omega_g} (Y^{n+2} - Y^{n+1}) Y^{n}_\ast\, dy.
		\end{split}
	\end{equation*}
	
	Thus, \( B_{X,Y} = G_w(X^{n}_\ast, Y^{n}_\ast) - \Delta G^{\ast}_w(X^{n+1}, Y^{n+1}, X^{n+2}, Y^{n+2}) \), and hence it follows that
	\begin{equation*}
		\begin{split}
			A_{X,Y} \le\ & q\left( G_w(X^{n}_\ast, Y^{n}_\ast) - \Delta G_w^{\ast}(X^{n+1}, Y^{n+1}, X^{n+2}, Y^{n+2}) \right) \\
			& - \int_{\Omega_f} X^{n+2} (u - X^{n+1}_0)\, dx 
			- \int_{\Omega_g} Y^{n+2} (v - Y^{n+1}_0)\, dy \\
			& + {\frac{c}{2}} \left( w(X^{n+1}_0, Y^{n+1}_0) - w(u, v) \right).
		\end{split}
	\end{equation*}
	
	Applying \eqref{1alphacon} and \eqref{esX0Y0} to the right hand side of the above inequality, we obtain
	\begin{align*}
		A_{X,Y}\le&\ q G_w(X^{n}_\ast,Y^{n}_\ast)-\frac{q}{2\alpha}\left( \|X^{n+2}-X^{n+1}\|_{L^2}^2+\|Y^{n+2}-Y^{n+1}\|_{L^2}^2 \right)\\
		&\ +\int_{\Omega_f}\left( X^{n+2}-X^{n+1}-r( X^{n+1}-X^{n}) \right)(X^{n+1}_0-u)dx\\
		&\ +\int_{\Omega_g}\left( Y^{n+2}-Y^{n+1}-r( Y^{n+1}-Y^{n} \right)(Y^{n+1}_0-v)dy\\
		&\ -\left( s+{\frac{c}{2}} \right) w(u-X^{n+1}_0,v-Y^{n+1}_0)+sw(u-X^{n}_0,v-X^{n}_0)-sw(X^{n+1}_0-X^{n}_0,Y^{n+1}_0-Y^{n}_0).
	\end{align*}
	Hence, we have proved
	\begin{align}\label{AXYnin}
			(1+q)\left( G_w(X^{n+1}_\ast,Y^{n+1}_\ast)-G(u,v) \right)+{\frac{c}{4}}w(X^{n+1}_0,Y^{n+1}_0)\le&\ q\left( G_w(X^{n}_\ast,Y^{n}_\ast)-G(u,v) \right)+{A_{X,Y,n}},
	\end{align}
	where
	\begin{equation*}
		\begin{split}
			{A_{X,Y,n}}=&\ -\frac{q}{2\alpha}\left( \|X^{n+2}-X^{n+1}\|_{L^2}^2+\|Y^{n+2}-Y^{n+1}\|_{L^2}^2 \right)\\
			&\ +\int_{\Omega_f}\left( X^{n+2}-X^{n+1}-r( X^{n+1}-X^{n}) \right)(X^{n+1}_0-u)dx\\
			&\ +\int_{\Omega_g}\left( Y^{n+2}-Y^{n+1}-r( Y^{n+1}-Y^{n} \right)(Y^{n+1}_0-v)dy\\
			&\ -\left( s+{\frac{c}{2}} \right) w(u-X^{n+1}_0,v-Y^{n+1}_0)+sw(u-X^{n}_0,v-X^{n}_0)-sw(X^{n+1}_0-X^{n}_0,Y^{n+1}_0-Y^{n}_0).
		\end{split}
	\end{equation*}

		We will use the inequality \eqref{AXYnin} inductively. To this end, let \( \beta_m \) be a positive sequence satisfying
	\begin{equation*}
		\beta_m q = \beta_{m-1} (1+q), \quad \forall m \ge 1.
	\end{equation*}
	
	We can easily see that
	\begin{equation*}
		\beta_m = \beta_0 r^{-m}, \quad \forall m \ge 0.
	\end{equation*}
	
	Choosing \( \beta_0 = 1 \), we get \( \beta_m = r^{-m} \).
	Now, the inductive steps yield
	\begin{equation*}
		\begin{split}
			\beta_{m+1} q \left( G_w(X^{m+1}_1, Y^{m+1}_1) - G(u, v) \right) 
			+ \frac{c}{2} \sum_{j=0}^{m} \beta_j w(X^{j+1}_0, Y^{j+1}_0) 
			\leq q \left( G_w(0,0) - G(u, v) \right) + \sum_{j=0}^{m} \beta_j A_{X,Y,j}.
		\end{split}
	\end{equation*}
	
	We next bound \(\sum_{j=0}^{m} \beta_j A_{X,Y,j}\). Using the fact that 
	\[
	\frac{q c}{2} = s,
	\]
	we find
	\begin{equation*}
		\beta_j \left( s + \frac{c}{2} \right) = r^{-j} \left( s + \frac{c}{2} \right) = r^{-j-1} \left( \frac{q s}{1+q} + \frac{s}{1+q} \right) = \beta_{j+1} s,
	\end{equation*}
	which implies
	\begin{equation*}
		\begin{split}
			&	\sum_{j=0}^{m} \beta_j \left( \left( s + \frac{c}{2} \right) w(u - X^{j+1}_0, v - Y^{j+1}_0) - s w(u - X^{j}_0, v - Y^{j}_0) \right)\\
			= \ & \beta_{m} \left( s + \frac{c}{2} \right) w(u - X^{m+1}_0, v - Y^{m+1}_0) - s w(u,v).
		\end{split}
	\end{equation*}
	
	We also have another identity
	\begin{equation*}
		\begin{split}
			\sum_{j=0}^{m}&\beta_j\int_{\Omega_f}\left( X^{j+2}-X^{j+1}-r( X^{j+1}-X^{j}) \right)(X^{j+1}_0-u)dx\\
			=&\ -\sum_{j=1}^{m}\beta_{j-1}\int_{\Omega_f}\left( X^{j+1}-X^{j} \right)(X^{j+1}_0-X^{j}_0)dx+\beta_m\int_{\Omega_f}\left( X^{m+2}-X^{m+1} \right)(X^{m+1}_0-u)dx.
		\end{split}
	\end{equation*}
	For $1\le j\le m$, we have the follow estimate
	\begin{equation*}
		\begin{split}
			-\frac{\beta_{j-1} q}{2\alpha}&\|X^{j+1}-X^{j}\|_{L^2}^2+\beta_{j-1}\int_{\Omega_f}\left( X^{j+1}-X^{j} \right)(X^{j+1}_0-X^{j}_0)dx-\frac{\beta_j s}{2}\|X^{j+1}_0-X^{j}_0\|_{L^2}^2\\
			=&\ \frac{\beta_{j-1}}{2}\left( \frac{1}{s}-\frac{q}{\alpha} \right)\|X^{j+1}-X^{j}\|_{L^2}^2-\frac{\beta_{j-1}s}{2}\left\|\frac{X^{j+1}-X^{j}}{s}-X^{j+1}_0+X^{j}_0\right\|_{L^2}^2\\
			&-{\frac{\beta_{j-1}c}{4}}\|X^{j+1}_0-X^{j}_0\|_{L^2}^2\ 
			\le \ 0.
		\end{split}
	\end{equation*}
	For \( \beta_m \), we have a similar estimate
	\begin{equation*}
		-\frac{\beta_m q}{2\alpha} \| X^{m+2} - X^{m+1} \|_{L^2}^2 + \beta_m \int_{\Omega_f} \left( X^{m+2} - X^{m+1} \right) (X^{m+1}_0 - u) \, dx - \frac{\beta_m s}{2} \| X^{m+1} - u \|_{L^2}^2 \le 0.
	\end{equation*}
	We also repeat these estimates for the \( Y \) counterpart.	
	Therefore, we obtain the following bound
	\begin{equation*}
		\sum_{j=0}^{m} \beta_j {A_{X,Y,j}} \le s w(u, v) - {\frac{c \beta_m}{2}} w(u - X^{m+1}_0, v - X^{m+1}_0).
	\end{equation*}
	
	For all \( m \), we now obtain the estimate
	\begin{equation*}
		\begin{split}
			\beta_{m+1} q & \left( G_w(X^{m+1}_\ast, Y^{m+1}_\ast) - G(u, v) \right) + {\frac{c \beta_m}{2}} w(u - X^{m+1}_0, v - X^{m+1}_0) \\
			& + {\frac{c}{2}} \sum_{j=0}^{m} \beta_j w(X^{j+1}_0, Y^{j+1}_0) \
			\le q \left( G_w(0, 0) - G(u, v) \right) + s w(u, v).
		\end{split}
	\end{equation*}

	Recalling that \( X^1_\ast = \frac{X^1_0}{1+q} \), we deduce \( r^{-1} X^1_\ast = \frac{X^1_0}{q} \). Similarly, we have 
	\[
	X^2_\ast = r X^1_\ast + (1-r)X^2_0 \quad \text{so} \quad r^{-2} X^2_\ast = r^{-1} X^1_\ast + (r^{-2} - r^{-1}) X^2_0 = \frac{1}{q} \left( X^1_0 + r^{-1} X^2_0 \right).
	\]
	Inductively,
	\[
	\beta_{m+1} X^{m+1}_\ast = \frac{1}{q} \sum_{j=0}^{m} \beta_j X^{j+1}_0.
	\]
	A similar equality also holds for \( Y \). By the convexity and homogeneity of \( w \),
	\[
	\left( \frac{\beta_{m+1}q}{\sum_{j=0}^m \beta_j} \right)^2 w \left( X^{m+1}_\ast, Y^{m+1}_\ast \right) \le \sum_{j=0}^{m} \frac{\beta_j}{\sum_{j'=0}^{m} \beta_{j'}} w \left( X^{j+1}_0, Y^{j+1}_0 \right).
	\]
	We compute
	\[
	\frac{\beta_{m+1}q}{\sum_{j=0}^{m} \beta_j} = \frac{r^{-m-1}q}{1 + r^{-1} + \dots + r^{-m}} = \frac{r^{m}(1-r)q}{(1-r^{m+1})r^{m+1}} = \frac{1}{1 - r^{m+1}} > 1.
	\]
	This gives
	\[
	\beta_{m+1} q w \left( X^{m+1}_\ast, Y^{m+1}_\ast \right) < \sum_{j=0}^{m} \beta_j w \left( X^{j+1}_0, Y^{j+1}_0 \right).
	\]
	As a consequence,
	\[
	\beta_{m+1} q \left( G \left( X^{m+1}_\ast, Y^{m+1}_\ast \right) - G(u, v) \right) + {\frac{c \beta_m}{2}} w(u - X^{m+1}_0, v - X^{m+1}_0) \le q \left( G(0,0) - G(u, v) \right) + s w(u, v).
	\]
	
	On \( \cL^f_g \times \cL^g_f \), the minimizer \( (k^{\ast}_1, k^{\ast}_2) \) of \eqref{mindual} is also the minimizer of \( G \) (see \eqref{Fast} and \eqref{Cast} in the Appendix). Therefore, we have the following estimate:
	\begin{equation}\label{ErrorEstimate}
		\left\| k^{\ast}_1 - X^{m+1}_0 \right\|^2_{L^2} + \left\| k^{\ast}_2 - Y^{m+1}_0 \right\|^2_{L^2} \le {\frac{2 r^{m}}{c}} \left( q \left( G(0,0) - G(k^{\ast}_1, k^{\ast}_2) \right) + sw(k^{\ast}_1, k^{\ast}_2) \right).
	\end{equation}
	
	We then set \(m=L-1\) and \( (\bar{k}^{\ast}_1, \bar{k}^{\ast}_2) = (X^{L}_0, Y^{L}_0) \).

	\subsection{Proof of Lemma \ref{smoothapprox} and Proposition \ref{eqthmexact}}\label{proeqthmexact}
	In this subsection, we construct the approximated transport equation using the functions \( (X^{L}_0, Y^{L}_0) \) obtained from the algorithm. To this end, we recall the relationship between the minimizer \( k \) of \eqref{minL2del} and the minimizer \( (k^{\ast}_1, k^{\ast}_2) \) of \eqref{mindual} in the system \eqref{kkstar}--\eqref{ky}, as well as Remark \ref{Lfg}.
	
	From the approximation algorithm and \eqref{ErrorEstimate}, we have \( (\bar{k}^\ast_1, \bar{k}^\ast_2) = (X^{L}_0, Y^{L}_0) \) such that
	\[
	\left\| k^{\ast}_1 - \bar{k}^\ast_1 \right\|^2_{L^2} + \left\| k^{\ast}_2 - \bar{k}^\ast_2 \right\|^2_{L^2} \le \frac{2 r^{L-1}}{c} \left( q \left( G(0,0) - G(k^{\ast}_1, k^{\ast}_2) \right) + sw(k^{\ast}_1, k^{\ast}_2) \right).
	\]
	
	The approximate identity \(\Gamma_{S,\varsigma}\) is defined by
	\begin{equation}\label{molldef}
		S(x) = \begin{cases}
			\exp\big((1-\|x\|^2)^{-1}\big), & \text{if } \|x\| < 1, \\
			0, & \text{if } \|x\| \geq 1,
		\end{cases} \quad
		\Gamma_S(x) = \frac{S(x)}{\int_{\mathbb{R}^d} S(x) \, dx}, \quad
		\Gamma_{S,\varsigma}(x) = \frac{1}{\varsigma^{d}} \Gamma_S\left(\frac{x}{\varsigma}\right).
	\end{equation}
	
	For any \( 0 < \varepsilon_0 \), there exists \(\varsigma_0\) such that
	\begin{equation}
		\|\Gamma_{S,\varsigma_0} \ast \bar{k^\ast_i} - \bar{k}^\ast_i\|_{L^2} \le \varepsilon_0,
		\label{Mollk}
	\end{equation}
	with the convention \(\bar{k}^\ast_1(x) = 0\) for \(x \notin \Omega_f\) and \(\bar{k}^\ast_2(y) = 0\) for \(y \notin \Omega_g\). Thus, we construct
	\[
	\widetilde{k^\ast_1} = \chi_{\Omega_f} \big(\Gamma_{S,\varsigma_0} \ast \bar{k}^\ast_1\big), \quad
	\widetilde{k^\ast_2} = \chi_{\Omega_g} \big(\Gamma_{S,\varsigma_0} \ast \bar{k}^\ast_2\big),
	\]
	satisfying
	\[
	\|\bar{k}^\ast_i - \widetilde{k^\ast_i}\|_{L^2} \le \varepsilon_0,
	\]
	where \(\chi\) is the characteristic function. This gives Lemma \ref{smoothapprox}.

	As $\widetilde{k^\ast_i}$ are restrictions of smooth functions, \(\widetilde{k^\ast_i}\) are locally Hölder continuous with the same Hölder exponent as the cost \(\mathscr C\) on their supports.
	
	For 
	\[
	\bar{k}(x,y) = \max\left\{ \delta, \widetilde{k^\ast_1}(x) + \widetilde{k^\ast_2}(y) - \mathscr C(x,y) \right\},
	\]
	we obtain
	\[
		\left\| k - \bar{k} \right\|_{L^2} \le 2 \sqrt{2\max\{ |\Omega_f|, |\Omega_g| \}} \sqrt{ \frac{r^{L-1}}{c} \left( q \left( G(0,0) - G(k^{\ast}_1, k^{\ast}_2) \right) + sw(k^{\ast}_1, k^{\ast}_2) \right) + \varepsilon_0^2 }.
	\]
	
	Recalling that
	\[
	\bar{k}_x = \int_{\Omega_g} \bar{k} \, dy, \quad \bar{k}_y = \int_{\Omega_f} \bar{k} \, dx,
	\]
	we deduce
	\[
		\left\| k_x - \bar{k}_x \right\|_{L^2}, \quad \left\| k_y - \bar{k}_y \right\|_{L^2} \le 2\sqrt{2} \max\{ |\Omega_f|, |\Omega_g| \} \sqrt{ \frac{r^{L-1}}{c} \left( q \left( G(0,0) - G(k^{\ast}_1, k^{\ast}_2) \right) + sw(k^{\ast}_1, k^{\ast}_2) \right) + \varepsilon_0^2 }.
	\]
	
	We define
	\[
	\overline{f} = \frac{\bar{k}_x}{1 - \widetilde{k^\ast_1}}, \quad \overline{g} = \frac{\bar{k}_y}{1 - \widetilde{k^\ast_2}}.
	\]
	
	Since \( k^\ast_1, \widetilde{k^\ast_1} \le 1 - \frac{\delta |\Omega_g|}{f} \) almost everywhere, we will prove the following estimates:
	\begin{align}
		\label{denomest}
		\left\| \frac{1}{1 - k^\ast_1} \right\|_{L^\infty}, \quad \left\| \frac{1}{1 - \widetilde{k^\ast_1}} \right\|_{L^\infty} &\le \frac{\|f\|_{L^\infty}}{\delta |\Omega_g|}, \\
		\left\| \frac{k_x}{1 - k^\ast_1} \right\|_{L^\infty} &= \|f\|_{L^\infty}, \label{k_xeq} \\
		\left\| \frac{k^\ast_1}{1 - k^\ast_1} \right\|_{L^\infty} &\le  \frac{\|f\|_{L^\infty}}{\delta |\Omega_g|}. \label{kastesti}
	\end{align}
	
	The estimates in \eqref{denomest} are straightforward. The equality in \eqref{k_xeq} follows from \eqref{kx}. The proof of \eqref{kastesti} proceeds as follows.  Since \( \delta |\Omega_g| \le c \le f \), if \( k^\ast_1 \le 0 \), we find
	\[
	\left| \frac{k^\ast_1}{1 - k^\ast_1} \right| =\frac{-k^\ast_1}{1-k^\ast_1}\le 1 \le \frac{f}{\delta |\Omega_g|}.
	\]
	
	If \( k^\ast_1 > 0 \), then \( f > \delta |\Omega_g| \), and we get
	\[
	\left( 1 + \frac{\delta |\Omega_g|}{f - \delta |\Omega_g|} \right) k^\ast_1 \le 1,
	\]
	which is equivalent to
	\[
	\frac{k^\ast_1}{1 - k^\ast_1} \le \frac{f}{\delta |\Omega_g|}-1.
	\]
	
	Hence, we obtain \eqref{kastesti}.

	As a consequence of \eqref{denomest}--\eqref{kastesti}, we have the estimate
	\[
	\begin{split}
		\left\| f - \overline{f} \right\|_{L^2} \le{}& \left\| \frac{k_x - \bar{k}_x}{(1 - k^\ast_1)(1 - \widetilde{k^\ast_1})} \right\|_{L^2} + \left\| \frac{k_x (k^\ast_1 - \widetilde{k^\ast_1})}{(1 - k^\ast_1)(1 - \widetilde{k^\ast_1})} \right\|_{L^2} + \left\| \frac{k^\ast_1 (k_x - \bar{k}_x)}{(1 - k^\ast_1)(1 - \widetilde{k^\ast_1})} \right\|_{L^2} \\
		\le{}& \frac{E^2}{\delta |\Omega_g|} \left( \frac{2}{\delta |\Omega_g|} \left\| k_x - \bar{k}_x \right\|_{L^2} + \left\| k^\ast_1 - \widetilde{k^\ast_1} \right\|_{L^2} \right) \\
		\le{}& \frac{2E^2}{\delta |\Omega_g|} \left( \frac{2\sqrt{2}}{\delta} + 1 \right) \sqrt{ \frac{r^{L-1}}{c} \left( q \left( G(0,0) - G(k^{\ast}_1, k^{\ast}_2) \right) + sw(k^{\ast}_1, k^{\ast}_2) \right) + \varepsilon_0^2 }.
	\end{split}
	\]
	
	Similarly, we obtain the following estimate for \( g \):
	\[
		\left\| g - \overline{g} \right\|_{L^2} \le \frac{2 E^2}{\delta |\Omega_f|} \left( \frac{2\sqrt{2}}{\delta} + 1 \right) \sqrt{ \frac{r^{L-1}}{c} \left( q \left( G(0,0) - G(k^{\ast}_1, k^{\ast}_2) \right) + sw(k^{\ast}_1, k^{\ast}_2) \right) + \varepsilon_0^2 }.
	\]
	
We now investigate the convex solution \( \phi \) of \eqref{Mongeampere}. Since \(\bar{k}_x, \bar{k}_y\) are locally Hölder continuous, by \cite{Caffar96,ChenLiuWang21}, the function \( \phi \) is strongly convex and belongs to \( C^{2} \). For each fixed \( t \in [0,T] \), the strong convexity implies that \( \mathbb{T}_t \) is injective and its {derivative matrix \[D\T_t=\left( 1-\frac{t}{T} \right)Id+\frac{t}{T}\Hess\phi\]} is strictly positive definite. This means the inverse map \( \mathbb{T}_t^{-1} : \mathbb{T}_t(\Omega_f) \to \Omega_f \) is well-defined for each \( t \in [0,T] \), and \( \mathbb{T}^{-1}_t \in C^1 \). As a consequence, \( \xi_t \) is well-defined and belongs to \( C^1 \).
	
	For
	\[
	\overline{\mu}_t = (\mathbb{T}_t)_{\#} \left( \overline{\mu}_0 \, e^{\int_{0}^{t} \zeta_{t'}(\mathbb{T}_{t'}(x)) \, dt'} \right),
	\]
	the measure \( \overline{\mu}_t \) solves the continuity equation
	\[
	\partial_t \overline{\mu}_t + \nabla \cdot (\xi_t \, \overline{\mu}_t) = \zeta_t \overline{\mu}_t,
	\]
	with initial and terminal conditions \(\overline{\mu}_0 = \overline{f} \, \mathcal{L}\) and \(\overline{\mu}_T = \overline{g} \, \mathcal{L}\). The proof follows exactly as in Section \ref{proeqexactcapital}.
	
	Moreover, \( \overline{\mu}_t \) is the unique solution and depends continuously on \( t \), according to \cite[Proposition 3.6]{MANIGLIA2007601}.

	\subsection{Proof of Theorem \ref{neuralthm}}\label{proneuralthm}
	
	In this subsection, we construct the desired Neural ODE. The main technique in the proof is to use mollifiers to approximate \(\bar{\xi}_t\), then further approximate the vector field by Hermite polynomials. These Hermite polynomials are then expanded as sums of ridge polynomials, as discussed in \cite{ridge}. Finally, the ridge polynomials are approximated using the concept of Network Approximate Identity (nAI) introduced in Definition \ref{nAIdef}.
	
	As mentioned in Remark \ref{neuralmrk}, we present the proof for the numerical solution \(\nabla \bar{\phi}_n\) of the Monge–Ampère equation \eqref{Mongeampere}. Theorem \ref{neuralthm} corresponds to the special case \(\bar{\phi}_n = \phi\). Thus, the proof of Remark \ref{neuralmrk} suffices.

	\subsubsection{Some uniform bounds and approximation by Hermite polynomials}
	First, we adopt the convention that \(\bar{\xi}_t(x) = 0\) for \(x \notin \bar{\T}_t(\Omega_f)\). Observing that \(\nabla \bar{\phi}_n \in C^1(\Omega_f)\), we have
	\[
	\|\bar{\xi}_t(x)\|_{L^\infty} \leq \frac{1}{T} \sup_{x \in \bar{\T}_t(\Omega_f)} \left\| \nabla \bar{\phi}_n(\bar{\T}_t^{-1}(x)) - \bar{\T}_t^{-1}(x) \right\| \leq \frac{1}{T} \max_{x \in \Omega_f} \left\| \nabla \bar{\phi}_n(x) - x \right\|,
	\]
	which is uniformly bounded for all \(t \in [0,T]\).
	
	{As $\bar{\phi}_n$ is strongly convex and $\bar{\phi}_n\in C^2(\Omega_f)$, we get \(\min_{x \in \Omega_f} \det(\mathrm{Hess} \, \bar{\phi}_n) > 0\)}. Therefore, we have
	\[
	\det(D \bar{\T}_t) \geq \det(\mathrm{Hess} \, \bar{\phi}_n)^{t/T} \geq \min \left\{ 1, \min_{\Omega_f} \det(\mathrm{Hess} \, \bar{\phi}_n) \right\} > 0.
	\]
	Thus,
	\[
	|\bar{\T}_t(\Omega_f)| = \int_{\bar{\T}_t(\Omega_f)} dx = \int_{\Omega_f} \frac{1}{\det(D \bar{\T}_t)} dx \leq \frac{|\Omega_f|}{\min \left\{ 1, \min_{\Omega_f} \det(\mathrm{Hess} \, \bar{\phi}_n) \right\}}.
	\]
	
	We deduce that for each \(n\), \(\bar{\xi}_t\) is bounded in \(L^2\) uniformly for all \(t \in [0,T]\).
	
	Using a similar estimate, we also obtain that \(\int_{\partial \bar{\T}_t(\Omega_f)} dS\) is uniformly bounded for all \(t \in [0,T]\).
	
	Let us recall that for a matrix \(\Lambda \in \mathbb{R}^{d \times d}\), the Hilbert–Schmidt norm is defined by
	\[
	\|\Lambda\|_{HS} = \sup_{x \neq 0} \frac{\|\Lambda x\|}{\|x\|}.
	\]
	
	We have
	\[
	D \bar{\xi}_t = \frac{D \bar{\T}_t^{-1}}{T} \left( \mathrm{Hess} \, \bar{\phi}_n (\bar{\T}_t^{-1}) - I \right),
	\]
	and
	\[
	\nabla \bar{\zeta}_t = \frac{D \bar{\T}_t^{-1}}{T} \left( - \frac{\nabla \widetilde{k_1^*}(\bar{\T}_t^{-1})}{\left(1 - \frac{t}{T} \widetilde{k_1^*}(\bar{\T}_t^{-1}) \right)^2} + \frac{\mathrm{Hess} \, \bar{\phi}_n \nabla \widetilde{k_2^*} \left( \nabla \bar{\phi}_n (\bar{\T}_t^{-1}) \right)}{\left(1 - \frac{t}{T} \widetilde{k_2^*}(\nabla \bar{\phi}_n(\bar{\T}_t^{-1})) \right)^2} \right).
	\]
	
	Because \(\bar{\phi}_n \in C^2(\Omega_f)\), \(\widetilde{k_1^*}, \widetilde{k_2^*}\) are smooth, and because of estimate \eqref{denomest}, it suffices to estimate \(\| D \bar{\T}_t^{-1} \|_{HS}\). Since \(\bar{\phi}_n\) is strongly convex, there exists positive \(\lambda \leq \| \mathrm{Hess} \, \bar{\phi}_n \|_{HS}\), which yields
	\[
	\| D \bar{\T}_t^{-1} \| = \left\| \left( 1 - \frac{t}{T} \right) I + \frac{t}{T} \mathrm{Hess} \, \bar{\phi}_n \right\|_{HS}^{-1} \leq \left( 1 - \frac{t}{T} + \frac{t}{T} \lambda \right)^{-1} \leq \min \{ 1, \lambda \}^{-1}.
	\]
	
	Thus, for any \(t \in [0,T]\), there exists a constant
	\[
	\mathfrak{L}_{\varepsilon_0, n, L} \geq \max \left\{ \mathrm{Lip}_{\bar{\T}_t(\Omega_f)}(\bar{\xi}_t), \mathrm{Lip}_{\bar{\T}_t(\Omega_f)}(\bar{\zeta}_t), 1 + \frac{E\max\left\{ 2,\|\bar{k_1^\ast}\|_{L^\infty}+\|\bar{k_2^\ast}\|_{L^\infty} \right\}}{\delta\min\left\{ |\Omega_f|,|\Omega_g| \right\}} \right\}.
	\]
	{
		We set 
		\[
		\varepsilon_1'=\varepsilon_1\frac{\min \left\{ 1, \min_{\Omega_f} \det(\mathrm{Hess} \, \bar{\phi}_n) \right\}^{1/2}}{e^{\fL_{\varepsilon_0,n,L}}(e^{T\fL_{\varepsilon_0,n,L}}-1)}.
		\]
	}
	
	Recall the mollifier \(\Gamma_{S,\varsigma}\) defined in \eqref{molldef}, and  we define
	\[
	(\bar{\xi}_t^\varsigma)_j^S(x) = \int_{\mathbb{R}^d} \Gamma_{S,\varsigma}(x - y) (\bar{\xi}_t)_j(y) \, dy.
	\]
	There exists \(\varsigma_{\varepsilon_0, \varepsilon_1, n, L} \in (0,1)\) such that for all \(t \in [0,1]\) and \(0 < \varsigma < \varsigma_{\varepsilon_0, \varepsilon_1, n, L}\),
	\begin{equation}\label{Moll}
		\left\| (\bar{\xi}_t)_j - (\bar{\xi}_t^\varsigma)_j^S \right\|_{L^2} < \frac{\varepsilon'_1}{3d}.
	\end{equation}
	
	To show \eqref{Moll}, we introduce the notations
	\begin{equation}
		\bar{\T}_t(\Omega_f)^\varsigma = \{ x \in \bar{\T}_t(\Omega_f) \mid B(x, \varsigma) \subset \bar{\T}_t(\Omega_f) \},
	\end{equation}
	and
	\begin{equation}\label{smallext}
		\bar{\T}_t(\Omega_f)_\varsigma = \{ x \in \mathbb{R}^d \mid B(x, \varsigma) \cap \bar{\T}_t(\Omega_f) \neq \emptyset \}.
	\end{equation}
	
	Next, we estimate
	\begin{align*}
		\left\| (\bar{\xi}_t)_j - (\bar{\xi}_t^\varsigma)_j^S \right\|_{L^2}^2
		&\lesssim \int_{\bar{\T}_t(\Omega_f)^\varsigma} \int_{B(0, \varsigma)} \mathfrak{L}_{\varepsilon_0, n, L}^2 \|y\|^2 \Gamma_{S,\varsigma}(y) \, dy \, dx \\
		&\quad + \int_{\bar{\T}_t(\Omega_f)_\varsigma \setminus \bar{\T}_t(\Omega_f)^\varsigma} \int_{B(0, \varsigma)} \| \bar{\xi}_t(x) - \bar{\xi}_t(x - y) \|^2 \Gamma_{S,\varsigma}(y) \, dy \, dx \\
		&\lesssim \varsigma^2 \mathfrak{L}_{\varepsilon_0, n, L}^2 |\bar{\T}_t(\Omega_f)| + |\bar{\T}_t(\Omega_f)_\varsigma \setminus \bar{\T}_t(\Omega_f)^\varsigma| \|\bar{\xi}_t\|_{L^\infty}^2 \\
		&\lesssim \varsigma^2 \mathfrak{L}_{\varepsilon_0, n, L}^2 |\bar{\T}_t(\Omega_f)| + \varsigma^d \int_{\partial \bar{\T}_t(\Omega_f)} dS \, \|\bar{\xi}_t\|_{L^2}^2.
	\end{align*}
	Letting \(\varsigma \to 0\), we obtain \eqref{Moll}.

	Since $\Omega_f$ and $\nabla \bar{\phi}_n(\Omega_f)$ are compact, there exists
	\[
	M > 1+\max_{x \in \Omega_f} \|x\| + T \left( {\varepsilon'_1} + \sup_t \|\bar{\xi}_t\|_{L^\infty} \right)
	\]
	such that
	\[
	\Omega_f \cup \nabla \bar{\phi}_n(\Omega_f) \subset [-M+1, M-1]^d.
	\]
	By convexity, we have $\bar{\T}_t(\Omega_f) \subset [-M+1, M-1]^d$ for all $t$, and thus
	\[
	\overline{\bar{\T}_t(\Omega_f)_\varsigma} \subset [-M, M]^d.
	\]

	For a fixed $\varsigma < \varsigma_{\varepsilon_0, \varepsilon_1, n, L}$, we approximate $(\bar{\xi}_t^\varsigma)_j^S$ using the Hermite polynomials defined by
	\[
	H_i(x) = (-1)^i e^{x^2} \frac{\partial^i}{\partial x^i} e^{-x^2}
	\]
	in the one-dimensional case $d=1$. For higher dimensions $d > 1$, let $\vec{n} \in \mathbb{N}^d$ and define the Hermite polynomials $H_{\vec{n}} : \mathbb{R}^d \to \mathbb{R}$ by
	\[
	H_{\vec{n}}(x) = \prod_{j=1}^d H_{n_j}(x_j) = (-1)^{|\vec{n}|} e^{\|x\|^2} \partial^{\vec{n}} e^{-\|x\|^2},
	\]
	where $|\vec{n}| = n_1 + n_2 + \cdots + n_d$ and $\partial^{\vec{n}} = \partial_{x_1}^{n_1} \cdots \partial_{x_d}^{n_d}$. The set
	\[
	\left\{ \frac{H_{\vec{n}}(x)}{\sqrt{\vec{n}! 2^{|\vec{n}|} \pi^{d/2}}} \right\}
	\]
	is a complete orthonormal system in the weighted space $L^2_\omega(\mathbb{R}^d)$ with weight $\omega(x) = e^{-\|x\|^2}$. This implies that we can write
	\[
	(\bar{\xi}_t^\varsigma(x))_j^S = \sum_{\vec{n} \in \mathbb{N}^d} (\widetilde{\xi}_{\vec{n}, t}^\varsigma)_j^S \frac{H_{\vec{n}}(x)}{\sqrt{\vec{n}! 2^{|\vec{n}|} \pi^{d/2}}},
	\]
	where $\vec{n}! = n_1! \cdots n_d!$ and the coefficients are given by
	\begin{equation}\label{Hermitecoef}
		(\widetilde{\xi}_{\vec{n}, t}^\varsigma)_j^S = \int_{\mathbb{R}^d} (\bar{\xi}_t^\varsigma(x))_j^S \frac{H_{\vec{n}}(x)}{\sqrt{\vec{n}! 2^{|\vec{n}|} \pi^{d/2}}} e^{-\|x\|^2} \, dx.
	\end{equation}
	
	We have $\|\partial^{|\vec{n}|} (\bar{\xi}_t^\varsigma)_j^S\|_{L^2}$ bounded uniformly in $t$ for all $|\vec{n}| \leq d+1$, and the support with respect to $x$ is compact. By \cite[Theorem 4.2, Proposition 4.3, and Theorem 4.4]{Mhaskar2002}, we can choose $\mathbf{n}$ depending on $\varepsilon_0, \varepsilon_1, n, L$ uniformly with respect to $t$, such that
	\[
	\left\| (\bar{\xi}_t^\varsigma(x))_j^S \sqrt{\omega(x)} - \sum_{\vec{n} \leq 2\mathbf{n} - 1} \prod_{l : \vec{n}_l > \mathbf{n}} \left( 2 - \frac{\vec{n}_l}{\mathbf{n}} \right) (\widetilde{\xi}_{\vec{n}, t}^\varsigma)_j^S \frac{H_{\vec{n}}(x)}{\sqrt{\vec{n}! 2^{|\vec{n}|} \pi^{d/2}}} \sqrt{\omega(x)} \right\|_{L^2} < \frac{\varepsilon'_1}{3d} e^{-d M^2 / 2},
	\]
	and
	\[
	\left\| (\bar{\xi}_t^\varsigma(x))_j^S \sqrt{\omega(x)} - \sum_{\vec{n} \leq 2\mathbf{n} - 1} \prod_{l : \vec{n}_l > \mathbf{n}} \left( 2 - \frac{\vec{n}_l}{\mathbf{n}} \right) (\widetilde{\xi}_{\vec{n}, t}^\varsigma)_j^S \frac{H_{\vec{n}}(x)}{\sqrt{\vec{n}! 2^{|\vec{n}|} \pi^{d/2}}} \sqrt{\omega(x)} \right\|_{L^\infty} < \frac{ {\varepsilon'_1} e^{-d M^2 / 2}}{3 d (2M)^{d/2}},
	\]
	where the inequality $\vec{n} \leq 2 \mathbf{n} - 1$ means that each component of $\vec{n}$ is less than $2\mathbf{n} - 1$. Also, by convention,
	\[
	\prod_{l : \vec{n}_l > \mathbf{n}} \left( 2 - \frac{\vec{n}_l}{\mathbf{n}} \right) = 1
	\]
	if $\vec{n} \leq \mathbf{n}$.
	
	We denote
	\[
	(\widetilde{\xi}_{\mathbf{n}, t}^\varsigma)_j^S(x) = \sum_{\vec{n} \leq 2 \mathbf{n} - 1} \prod_{l : \vec{n}_l > \mathbf{n}} \left( 2 - \frac{\vec{n}_l}{\mathbf{n}} \right) (\widetilde{\xi}_{\vec{n}, t}^\varsigma)_j^S \frac{H_{\vec{n}}(x)}{\sqrt{\vec{n}! 2^{|\vec{n}|} \pi^{d/2}}}.
	\]

	\subsubsection{Ridge functions}
	
	The approximation by ridge functions was introduced in \cite{ridge}; here we briefly review the concept.
	
	\begin{definition}
		A \emph{ridge polynomial function} is a polynomial $P:\mathbb{R}^d \to \mathbb{R}$ such that there exists a vector $y \in \mathbb{R}^d$ and a polynomial $\overline{P}:\mathbb{R} \to \mathbb{R}$ satisfying
		\[
		P(x) = \overline{P}(x \cdot y), \quad \forall x \in \mathbb{R}^d.
		\]
		\label{ridgedef}
	\end{definition}
	
	\begin{definition}\label{inter}
		A set $\mathcal{U} \subset \mathbb{R}^{d-1}$ is said to have the \emph{interpolation property} relative to the space of polynomials in $d-1$ variables if for each ${\bf m} \in \mathbb{N}$ there exist points $u_1, \dots, u_{\mathcal{N}_{\bf m}} \in \mathcal{U}$, where
		\[
		\mathcal{N}_{\bf m} = \binom{{\bf m} + d - 1}{d - 1},
		\]
		such that for any $(a_1, a_2, \dots, a_{\mathcal{N}_{\bf m}}) \in \mathbb{R}^{\mathcal{N}_{\bf m}}$ there is a unique polynomial $p : \mathbb{R}^{d-1} \to \mathbb{R}$ of degree less than ${\bf m}$ satisfying
		\[
		p(u_m) = a_m, \quad \text{for all } m=1,\dots,\mathcal{N}_{\bf m}.
		\]
	\end{definition}
	
	For constructing our approximation, we will select specific points $u_1, \dots, u_{\mathcal{N}_{\bf m}}$ as in Definition \ref{inter}. To do this, we recall the classical result that the number of nonnegative integer solutions to
	\begin{equation}\label{intergereq}
		b_1 + \dots + b_{d-1} \leq {\bf m}
	\end{equation}
	for ${\bf m} \in \mathbb{N}$ is exactly $\mathcal{N}_{\bf m}$. We pick $u_1, \dots, u_{\mathcal{N}_{\bf m}}$ so that each
	\[
	(u_{m,1}, \dots, u_{m,d-1})
	\]
	is a solution of \eqref{intergereq}. A polynomial in $d-1$ variables of degree at most ${\bf m}$ has $\mathcal{N}_{\bf m}$ coefficients. The system of equations
	\[
	p(u_m) = a_m, \quad m=1,\dots,\mathcal{N}_{\bf m}
	\]
	forms a consistent linear system with the coefficients of $p$ as unknowns. Hence, for any $(a_1, \dots, a_{\mathcal{N}_{\bf m}})$, there exists a unique polynomial $p$ satisfying these interpolation conditions.
	
	For the points $u_m$ specified above, consider the vectors
	\[
	v_m = (1, u_{m,1}, \dots, u_{m,d-1}),
	\]
	and the ridge polynomial defined by
	\[
	G_{m,{\bf m}}(x) = (x \cdot v_m)^{\bf m}.
	\]
	By \cite[Theorem 3.1]{ridge}, the set $\{ G_{m,{\bf m}} \}$ forms a basis for the space of homogeneous polynomials of degree ${\bf m}$. Consequently, there exist coefficients $h_{\vec{n}, m, {\bf m}} \in \mathbb{R}$ such that
	\[
	H_{\vec{n}}(x) = \sum_{{\bf m} = 0}^{|\vec{n}|} \sum_{m=1}^{\mathcal{N}_{\bf m}} h_{\vec{n}, m, {\bf m}} \, G_{m, {\bf m}}(x).
	\]
	Finding the coefficients $h_{\vec{n}, m, {\bf m}}$ is a linear algebra problem, so they can be computed explicitly.

	\subsubsection{Approximation using nAI}
	
	We recall the definition of nAI from Definition \ref{nAIdef} and let $\{W'_i\}$, $\{A'_i\}$, $\{b'_i\}$, and $\Gamma_\varsigma$ be as in the definition, where $\varsigma < \min\{1, \varsigma_{\varepsilon_0, \varepsilon_1, n, L}\}$. According to \cite[Formula (6.2)]{Tan2024}, there exists a partition
	\[
	-M = z_1 < z_2 < \cdots < z_{l_{\bf m} + 1} = M
	\]
	and quadrature points
	\[
	z_l \leq w_l \leq z_{l+1}, \quad l=1, \dots, l_{\bf m},
	\]
	such that
	\begin{align*}
		\left\| G_{m, {\bf m}}(x) - \frac{1}{\varsigma} \sum_{l=1}^{l_{\bf m}} (z_{l+1} - z_l) w_l^{\bf m} \left[ \sum_{i=1}^{N'} W_i' \sigma\left(\frac{A_i'}{\varsigma} (x \cdot v_m - w_l) + b_i'\right) \right] \right\|_{L^\infty} \\
		< \frac{ {\varepsilon'_1} (\max |h_{\vec{n}, m, {\bf m}}|)^{-1} (\varepsilon_1 / 3 + \|\bar{\xi}_t\|_{L^2})^{-1}}{3 d ({\bf n} + 1)^d \mathcal{N}_{({\bf n} + 1)^d} (2M)^{d/2}}.
	\end{align*}
	
	Next, define $W(t)$, $A$, and $b$ as follows:
	\begin{align*}
		[W_{\vec{n}, m, {\bf m}, i, l}(t)]_{j,j} &= \frac{\prod_{\ell : \vec{n}_\ell > \mathbf{n}} \left(2 - \frac{\vec{n}_\ell}{\mathbf{n}}\right) (\widetilde{\xi}_{\vec{n}, t}^\varsigma)_j^S h_{\vec{n}, m, {\bf m}}}{\varsigma \sqrt{\vec{n}! 2^{|\vec{n}|} \pi^{d/2}}} (z_{l+1} - z_l) w_l^{\bf m} W_i', \\
		[W_{\vec{n}, m, {\bf m}, i, l}(t)]_{j,j'} &= 0 \quad \text{if } j \neq j', \\
		[A_{\vec{n}, m, {\bf m}, i, l}]_{j,j'} &= \frac{A_i'}{\varsigma} v_{m,j'}, \\
		[b_{\vec{n}, m, {\bf m}, i, l}]_j &= b_i' - \frac{A_i'}{\varsigma} w_l.
	\end{align*}
	
	We take
	\[
	N = N' \sum_{\vec{n} \leq {\bf n}} \sum_{{\bf m}=0}^{|\vec{n}|} \mathcal{N}_{\bf m} \, l_{\bf m},
	\]
	and then re-index the sets
	\[
	\{ W_{\vec{n}, m, {\bf m}, i, l}(t), \; A_{\vec{n}, m, {\bf m}, i, l}, \; b_{\vec{n}, m, {\bf m}, i, l} \}
	\]
	as $\{W_i(t), A_i, b_i\}$ for $i = 1, \dots, N$.
	
	Finally, we define the neural vector field
	\[
	\widetilde{\xi}_t(x) = \sum_{i=1}^N W_i(t) \, \mathbf{\Sigma}(A_i x + b_i),
	\]
	for which we have the following error bounds:
	{\begin{align*}
			&\left\| \widetilde{\xi}_t - (\widetilde{\xi}_{{\bf n}, t}^\varsigma)^S \right\|_{L^\infty} < \frac{ \varepsilon'_1}{3 (2M)^{d/2}},\\
			&{\max_{x\in[-M,M]^{d}}}\left\| \widetilde{\xi}_t(x) - (\bar{\xi}_t^\varsigma)^S(x) \right\| \leq \frac{2 {\varepsilon'_1}}{3 (2M)^{d/2}}, \\
			&\left( \int_{[-M,M]^d}\left\| \widetilde{\xi}_t(x) - (\bar{\xi}_t^\varsigma)^S(x) \right\|^2dx \right)^{1/2} < \frac{2 {\varepsilon'_1}}{3}, \\
			&\left( \int_{[-M,M]^d}\left\| \widetilde{\xi}_t(x) - \bar{\xi}_t(x) \right\|dx \right)^{1/2} < {\varepsilon'_1}.
		\end{align*}
	}	
	
	\subsubsection{Convergence of solutions}\label{consol}
	{In this subsection, we show the error estimate for bounded Lipschitz distance which implies the convergence of the solutions of neural ODEs to a solution of \eqref{OTEq}.}
	
	We first show the convergence of $\nabla\bar{\phi}_n$ to $\nabla\Phi$ in norm.
	As $\varepsilon_0 \to 0^+$ and $L \to +\infty$, we have $\bar{k}_x \to k_x$ and $\bar{k}_y \to k_y$ in $L^2(\Omega_f)$ and $L^2(\Omega_g)$, respectively. Thus, these also converge in $L^1$ on their respective supports. By applying \cite[Theorem 1.2]{PhiFiPDE}, $\nabla \phi$ converges to $\nabla \Phi$ in $W^{1,\vartheta}(\Omega_f)$ for some $\vartheta > 1$. In particular, as $\varepsilon_0 \to 0$ and $n, L \to +\infty$, $\nabla \bar{\phi}_n$ converges to $\nabla \Phi$ in $L^{\min\{2, \vartheta\}}(\Omega_f)$. For simplicity, we assume $\vartheta \leq 2$.
	
	Our neural transport equation is given by
	\begin{equation}
		\begin{cases}
			\partial_t \widetilde{\mu}_t + \nabla_x \cdot \big( \widetilde{\xi}_t \widetilde{\mu}_t \big) = \bar{\zeta}_t \widetilde{\mu}_t, \\
			\widetilde{\mu}_0 = \overline{f} \, \mathcal{L},
		\end{cases}
		\label{neuraleq}
	\end{equation}
	
	The components $(\widetilde{\xi}_{\vec{n},t}^\varsigma)_j^S$ are uniformly bounded with respect to $t$ by the Cauchy–Schwarz inequality and the following observation:
	\[
	\left| (\widetilde{\xi}_{\vec{n},t}^\varsigma)_j^S \right| \leq \left\| (\bar{\xi}_t^\varsigma)_j^S e^{-\|x\|^2/2} \right\|_{L^2} \leq \left\| (\bar{\xi}_t^\varsigma)_j^S \right\|_{L^2} \leq \frac{\varepsilon'_1}{3d} + \left\| (\bar{\xi}_t)_j \right\|_{L^2}.
	\]
	
	This implies the estimate
	\begin{align*}
			{\max_{x\in[-M,M]^d}}\|\widetilde{\xi}_t\| 
			&< \frac{\varepsilon'_1}{3(2M)^{d/2}} + {\max_{x\in[-M,M]^d}}\|(\widetilde{\xi}_{\mathbf{n},t}^\varsigma)^S\| \\
			&\le \frac{\varepsilon'_1}{3(2M)^d} + \frac{\varepsilon'_1}{3} + \|(\bar{\xi}_t^\varsigma)^S\| \\
			&\le \frac{\varepsilon_1}{3(2M)^d} + \frac{\varepsilon'_1}{3} + \|\bar{\xi}_t\|_{L^\infty} \\
			&< \frac{M - 1 - \max_{x \in \Omega_f} \|x\|}{T}.
	\end{align*}

	Because $(\widetilde{\xi}^\varsigma_{\vec{n},t})_j^S$ is uniformly bounded with respect to $t$, the functions $W_i(t)$ are also uniformly bounded in $t$. Since the activation function is locally Lipschitz, $\widetilde{\xi}_t$ is Lipschitz continuous on $[-M,M]^d$, and the Lipschitz constant $\operatorname{Lip}_{[-M,M]^d}(\widetilde{\xi}_t)$ is uniformly bounded with respect to $t$.
	
	For each $x \in \Omega_f$, there exists a unique solution $\widetilde{\mathbb{T}}_\cdot(x)$ to the ODE
	\[
	\begin{cases}
		\partial_t \widetilde{\mathbb{T}}_t(x) = \widetilde{\xi}_t\big(\widetilde{\mathbb{T}}_t(x)\big), \quad t \in [0,T], \\
		\widetilde{\mathbb{T}}_0(x) = x.
	\end{cases}
	\]

	We show that for all \(x\in\Omega_f\) and \(t\in[0,T]\) \[\|\widetilde{\T}_t(x)\|< M.\]
	Suppose it is not the case, then there exists $x_0\in\Omega_f$ and \[t_0=\min\left\{ t\in(0,T]:\|\widetilde{T}_t(x_0)\|\ge M \right\}.\] On the other hand, we have
	\[\|\widetilde{T}_t(x_0)\|\le\|x_0\|+\left\|\int_{0}^{t_0}\widetilde{\xi}_t(\widetilde{\T}_t(x_0))dt\right\|
	\le \|x_0\|+\frac{t_0}{T}(M - 1 -\max_{x\in\Omega_f}\|x\|)\le M-1<M,
	\]
	which leads to a contradiction.

	We also observe that
	\[
	\|\nabla (\bar{\xi}_t^\varsigma)^S(x)\|_{HS} 
	= \sup_{\substack{z \neq 0 \\ \|z\|=1}} \left\| \int_{\mathbb{R}^d} \nabla \bar{\xi}_t(x - y) \Gamma_{S,\varsigma}(y) z \, dy \right\| 
	\le \|\nabla \bar{\xi}_t\|_{HS} \int_{\mathbb{R}^d} \Gamma_{S,\varsigma}(y) dy 
	\le \mathfrak{L}_{\varepsilon_0,n,L}.
	\]
	This yields the Lipschitz bound
	\[
	\operatorname{Lip} \big((\bar{\xi}_t^\varsigma)^S \big) \le \mathfrak{L}_{\varepsilon_0,n,L}.
	\]
	
Similarly to \(\widetilde{T}_t\), there exists a unique flow $(\bar{\mathbb{T}}_t^\varsigma)^S$ solving
	\[
	\begin{cases}
		\partial_t (\bar{\mathbb{T}}_t^\varsigma)^S(x) = (\bar{\xi}_t^\varsigma)^S \big( (\bar{\mathbb{T}}_t^\varsigma)^S(x) \big), \quad t \in [0,T], \\
		(\bar{\mathbb{T}}_0^\varsigma)^S(x) = x,
	\end{cases}
	\]
	with
	\[
	{\|(\bar{\mathbb{T}}_t^\varsigma)^S(x)\|} < M
	\]
	for every $x \in \Omega_f$ and all $t \in [0,T]$.
	
	Next, we estimate the difference between the flows $(\bar{\mathbb{T}}_t^\varsigma)^S$ and $\widetilde{\mathbb{T}}_t$. We have
	\begin{align*}
		\partial_t &\|(\bar{\mathbb{T}}_t^\varsigma)^S(x) - \widetilde{\mathbb{T}}_t(x)\|_{L^2(\Omega_f)}^2 \\
		= \; & 2 \big\langle (\bar{\xi}_t^\varsigma)^S((\bar{\mathbb{T}}_t^\varsigma)^S(x)) - (\bar{\xi}_t^\varsigma)^S(\widetilde{\mathbb{T}}_t(x)) + (\bar{\xi}_t^\varsigma)^S(\widetilde{\mathbb{T}}_t(x)) - \widetilde{\xi}_t(\widetilde{\mathbb{T}}_t(x)), \\
		& \quad (\bar{\mathbb{T}}_t^\varsigma)^S(x) - \widetilde{\mathbb{T}}_t(x) \big\rangle_{L^2(\Omega_f)} \\
		\leq \; & 2 \mathfrak{L}_{\varepsilon_0,n,L} \|(\bar{\mathbb{T}}_t^\varsigma)^S(x) - \widetilde{\mathbb{T}}_t(x)\|_{L^2(\Omega_f)}^2 \\
		& + 2 { {\max_{x\in[-M,M]^d}}\|(\bar{\xi}_t^\varsigma)^S(x) - \widetilde{\xi}_t(x)\|} \int_{\Omega_f} |(\bar{\mathbb{T}}_t^\varsigma)^S(x) - \widetilde{\mathbb{T}}_t(x)| \, dx \\
		\leq \; & 2 \mathfrak{L}_{\varepsilon_0,n,L} \|(\bar{\mathbb{T}}_t^\varsigma)^S(x) - \widetilde{\mathbb{T}}_t(x)\|_{L^2(\Omega_f)}^2 \\
		& + \frac{4 {\varepsilon'_1} \sqrt{|\Omega_f|}}{3 (2M)^{d/2}} \|(\bar{\mathbb{T}}_t^\varsigma)^S(x) - \widetilde{\mathbb{T}}_t(x)\|_{L^2(\Omega_f)}.
	\end{align*}
	
	By Grönwall’s inequality, it follows that
	\[
	\|(\bar{\mathbb{T}}_t^\varsigma)^S(x) - \widetilde{\mathbb{T}}_t(x)\|_{L^2(\Omega_f)} \leq \frac{2 {\varepsilon'_1} \sqrt{|\Omega_f|}}{3 (2M)^{d/2} \mathfrak{L}_{\varepsilon_0,n,L}} \big(e^{\mathfrak{L}_{\varepsilon_0,n,L} t} - 1 \big).
	\]
	
	We also estimate the difference between $(\bar{\mathbb{T}}_t^\varsigma)^S$ and $\bar{\mathbb{T}}_t$ in a similar manner

	\begin{align*}
		\partial_t \|(\bar{\mathbb{T}}^\varsigma_t)^S(x)-\bar{\mathbb{T}}_t(x)\|^2_{L^2(\Omega_f)}
		=&\ 2\left\langle (\bar{\xi}^\varsigma_t)^S((\bar{\mathbb{T}}^\varsigma_t)^S(x)) - (\bar{\xi}^\varsigma_t)^S(\bar{\mathbb{T}}_t(x)) + (\bar{\xi}^\varsigma_t)^S(\bar{\mathbb{T}}_t(x)) - \bar{\xi}_t(\bar{\mathbb{T}}_t(x)),\right. \\
		&\left. \quad (\bar{\mathbb{T}}^\varsigma_t)^S(x) - \bar{\mathbb{T}}_t(x) \right\rangle_{L^2(\Omega_f)} \\
		\le&\ 2\mathfrak{L}_{\varepsilon_0,n,L} \|(\bar{\mathbb{T}}^\varsigma_t)^S(x) - \bar{\mathbb{T}}_t(x)\|^2_{L^2(\Omega_f)} \\
		&\quad + 2 \left( \int_{\Omega_f} \left\|(\bar{\xi}^\varsigma_t)^S(\bar{\mathbb{T}}_t(x)) - \bar{\xi}_t(\bar{\mathbb{T}}_t(x))\right\|^2 dx \right)^{1/2} \|(\bar{\mathbb{T}}^\varsigma_t)^S(x) - \bar{\mathbb{T}}_t(x)\|_{L^2(\Omega_f)} \\
		\le&\ 2\mathfrak{L}_{\varepsilon_0,n,L} \|(\bar{\mathbb{T}}^\varsigma_t)^S(x) - \bar{\mathbb{T}}_t(x)\|^2_{L^2(\Omega_f)} + \frac{2{\varepsilon'_1}}{3(\det D\bar{\mathbb{T}}_t)^{1/2}} \|(\bar{\mathbb{T}}^\varsigma_t)^S(x) - \bar{\mathbb{T}}_t(x)\|_{L^2(\Omega_f)}.
	\end{align*}
	
	We deduce that
	\[
	\|(\bar{\mathbb{T}}^\varsigma_t)^S(x) - \bar{\mathbb{T}}_t(x)\|_{L^2} \le \frac{\varepsilon'_1}{3 \min\left\{ 1, \min_{\Omega_f} \det \mathrm{Hess}\, \bar{\phi}_n \right\}^{1/2} \mathfrak{L}_{\varepsilon_0,n,L}} \left( e^{\mathfrak{L}_{\varepsilon_0,n,L} t} - 1 \right).
	\]
	
	As a consequence,
	{\[
		\|\bar{\mathbb{T}}_t(x) - \widetilde{\mathbb{T}}_t(x)\|_{L^2} \lesssim \frac{\varepsilon'_1}{\min\left\{ 1, \min_{\Omega_f} \det \mathrm{Hess}\, \bar{\phi}_n \right\}^{1/2}\mathfrak{L}_{\varepsilon_0,n,L}} \left( e^{\mathfrak{L}_{\varepsilon_0,n,L} t} - 1 \right).
		\]}
	
	The solution of equation~\eqref{neuraleq} has the explicit form
	\[
	\widetilde{\mu}_t = (\widetilde{\mathbb{T}}_t)_{\#} \left( \overline{f} \, e^{\int_0^t \bar{\zeta}_{t'}(\widetilde{\mathbb{T}}_{t'}) \, dt'} \, \mathcal{L} \right).
	\]
	The solution is unique by \cite[Theorem 3.6]{MANIGLIA2007601}.
	
	Meanwhile, the unique solution of
	\begin{equation} \label{eqapp}
		\begin{cases}
			\partial_t \widetilde{\nu}_t + \nabla_x (\bar{\xi}_t \widetilde{\nu}_t) = \bar{\zeta}_t \widetilde{\nu}_t, \\
			\widetilde{\nu}_0 = \overline{f} \mathcal{L},
		\end{cases}
	\end{equation}
	has the explicit form
	\[
	\widetilde{\nu}_t = (\bar{\mathbb{T}}_t)_{\#} \left( \overline{f} \, e^{\int_0^t \bar{\zeta}_{t'}(\bar{\mathcal{T}}_{t'}) \, dt'} \, \mathcal{L} \right).
	\]
	
	We aim to show that
	{\begin{equation} \label{dbLe1}
			d_{bL}(\widetilde{\mu}_t, \widetilde{\nu}_t) \lesssim (1 + \varepsilon_0 + r^{L/2}) {\varepsilon'_1} \frac{e^{\fL_{\varepsilon_0,n,L}}\left( e^{t \mathfrak{L}_{\varepsilon_0,n,L}} - 1 \right)}{\min\left\{ 1, \min_{\Omega_f} \det \mathrm{Hess}\, \bar{\phi}_n \right\}^{1/2}}{\le(1 + \varepsilon_0 + r^{L/2})\varepsilon_1},
	\end{equation}}
	where \(d_{bL}\) is defined in~\eqref{dbL}. Let \(\varphi\) be a bounded Lipschitz function such that \(\|\varphi\|_{L^\infty} + \mathrm{Lip}(\varphi) \le 1\).

	In such case, we estimate
	\begin{align*}
		&\left|\int_{\Omega_f}\overline{f}\left( \varphi(\bar{\T}_t(x))e^{\int_{0}^{t}\bar{\zeta}_{t'}(\bar{\T}_{t'}(x))dt'}-\varphi(\widetilde{\T}_t(x))e^{\int_{0}^{t}\bar{\zeta}_{t'}(\widetilde{\T}_{t'}(x))dt} \right)dx\right|\\
		&\le \left|\int_{\Omega_f}\overline{f} (\varphi(\bar{\T}_t(x))-\varphi(\widetilde{\T}_t(x)))e^{\int_{0}^{t}\bar{\zeta}_{t'}(\bar{\T}_{t'}(x))dt'} dx\right|\\
		&\qquad+\left|\int_{\Omega_f}\overline{f}\varphi(\widetilde{\T}_t(x))\left( e^{\int_{0}^{t}\bar{\zeta}_{t'}(\bar{\T}_{t'}(x))dt'}-e^{\int_{0}^{t}\bar{\zeta}_{t'}(\widetilde{\T}_{t'}(x))dt'} \right)dx\right|.
	\end{align*}
	
	{	
		Performing a similar computation as in the proof of Proposition \ref{eqthexactcapital}, we find
		\begin{equation*}
			e^{\int_{0}^{t}\bar{\zeta}_{t'}(\bar{\T}_{t'}(x))\,dt'} = \frac{ 1 - \frac{t}{T} \widetilde{k^\ast_1}(x)}{1 - \frac{t}{T} \widetilde{k^\ast_2}(\nabla\bar{\phi}_n(x))}=1-\frac{t}{T}\frac{\widetilde{k^\ast_1}(x)-\widetilde{k^\ast_2}(\nabla\bar{\phi}_n(x))}{1 - \frac{t}{T} \widetilde{k^\ast_2}},
		\end{equation*}
		which is a bounded function with respect to \( x \) and \( t \), uniformly controlled by \( \fL_{\varepsilon_0,n,L} \). Hence,
		\begin{equation*}
			\left|\int_{\Omega_f} \overline{f} \left( \varphi(\bar{\T}_t(x)) - \varphi(\widetilde{\T}_t(x)) \right) e^{\int_{0}^{t}\bar{\zeta}_{t'}(\bar{\T}_{t'}(x))\,dt'}\,dx\right|
			\lesssim \|\overline{f}\|_{L^2} \Lip(\varphi)\, \fL_{\varepsilon_0,n,L} \|\bar{\T}_t - \widetilde{\T}_t\|_{L^2}.
		\end{equation*}
		
		Since \( \Lip_{\bar{\T}_t(\Omega_f)}(\bar{\zeta}_t) \le \fL_{\varepsilon_0,n,L} \) and $|\bar{\zeta}_t|\le\frac{\fL_{\varepsilon_0,n,L}}{T}$, we also have the estimate
		\begin{align*}
			\left| e^{\int_{0}^{t} \bar{\zeta}_{t'}(\bar{\T}_{t'}(x))\,dt'} - e^{\int_{0}^{t} \bar{\zeta}_{t'}(\widetilde{\T}_{t'}(x))\,dt'} \right| \lesssim e^{\fL_{\varepsilon_0,n,L}}\fL_{\varepsilon_0,n,L} \int_{0}^{t} \left| \bar{\T}_{t'}(x) - \widetilde{\T}_{t'}(x) \right|\,dt'.
		\end{align*}
		This leads to
		\begin{equation*}
			\left| \int_{\Omega_f} \overline{f} \varphi(\widetilde{\T}_t(x)) \left( e^{\int_{0}^{t} \bar{\zeta}_{t'}(\bar{\T}_{t'}(x))\,dt'} - e^{\int_{0}^{t} \bar{\zeta}_{t'}(\widetilde{\T}_{t'}(x))\,dt'} \right) dx \right|
			\lesssim \|\overline{f}\|_{L^2} \|\varphi\|_{L^\infty} e^{\fL_{\varepsilon_0,n,L}}\fL_{\varepsilon_0,n,L} \int_{0}^{t} \|\bar{\T}_{t'} - \widetilde{\T}_{t'}\|_{L^2}\,dt'.
		\end{equation*}

		Since \( \|f - \overline{f}\|_{L^2} \lesssim \varepsilon_0 + r^{L/2} \), we obtain \eqref{dbLe1}.

		Now, we estimate \( d_{bL}(\widetilde{\nu}_t, \mu_t) \), where \( \mu_t \) is a solution of \eqref{OTEq}, defined by
		\[
		\mu_t = (\mathscr{T}_t)_{\#} \left( f e^{\int_{0}^{t} \rho_{t'}(\mathscr{T}_{t'}(x))\,dt'} \cL \right),
		\]
		with \( \Xi, \rho \) as in Proposition \ref{eqthexactcapital}. For any test function \( \varphi \) such that \( \|\varphi\|_{L^\infty} + \Lip(\varphi) \le 1 \), we estimate
		\begin{align*}
			&\left| \int_{\Omega_f} \left( f \varphi(\mathscr{T}_t(x)) e^{\int_{0}^{t} \rho_{t'}(\mathscr{T}_{t'}(x))\,dt'} - \overline{f} \varphi(\bar{\T}_t(x)) e^{\int_{0}^{t} \bar{\zeta}_{t'}(\bar{\T}_{t'}(x))\,dt} \right) dx \right| \\
			&\le \left| \int_{\Omega_f} f \left( \varphi(\mathscr{T}_t(x)) - \varphi(\bar{\T}_t(x)) \right) e^{\int_{0}^{t} \rho_{t'}(\mathscr{T}_{t'}(x))\,dt'}\,dx \right| \\
			&\quad + \left| \int_{\Omega_f} f \varphi(\bar{\T}_t(x)) \left( e^{\int_{0}^{t} \rho_{t'}(\mathscr{T}_{t'}(x))\,dt'} - e^{\int_{0}^{t} \bar{\zeta}_{t'}(\bar{\T}_{t'}(x))\,dt'} \right) dx \right| \\
			&\quad + \left| \int_{\Omega_f} (f - \overline{f}) \varphi(\bar{\T}_t(x)) e^{\int_{0}^{t} \bar{\zeta}_{t'}(\bar{\T}_{t'}(x))\,dt'} dx \right|.
		\end{align*}

		We recall that
		\begin{equation*}
			e^{\int_{0}^{t}\rho_{t'}(\mathscr{T}_{t'}(x))\,dt'} = \frac{1 - \frac{t}{T} k^\ast_1(x)}{1 - \frac{t}{T} k^\ast_2(\nabla\Phi(x))}.
		\end{equation*}
		By Remark~\ref{Lfg} and equation~\eqref{kkstar}, the function \( k \) is bounded above. This implies that \( k_x, k_y \) are also bounded above. By~\eqref{kx}, it follows that \( k^\ast_1 \) is bounded. Consequently, \( e^{\int_{0}^{t}\rho_{t'}(\mathscr{T}_{t'}(x))\,dt'} \) is bounded.
		
		Applying Hölder's inequality, we obtain the estimate
		\begin{align*}
			&{\left|\int_{\Omega_f}  f (\varphi(\mathscr{T}_t(x)) - \varphi(\bar{\T}_t(x)))e^{\int_{0}^{t} \rho_{t'}(\mathscr{T}_{t'}(x))\,dt'}  dx \right|} \\
			&\qquad\lesssim \int_{\Omega_f} |\varphi(\mathscr{T}_t(x)) - \varphi(\bar{\T}_t(x))|\,dx \\
			&\qquad\lesssim \Lip(\varphi) \frac{t}{T} \|\nabla\Phi - \nabla\bar{\phi}_n\|_{L^\vartheta}.
		\end{align*}
		
		Using Cauchy--Schwarz's inequality, we find
		\begin{align*}
			\left|\int_{\Omega_f} (f - \overline{f}) \varphi(\bar{\T}_t(x)) e^{\int_{0}^{t} \bar{\zeta}_{t'}(\bar{\T}_{t'}(x))\,dt'} dx \right| 
			&\lesssim \|f - \overline{f}\|_{L^2} \|\varphi\|_{L^\infty} \left\|1 - \frac{t}{T} \widetilde{k^\ast_1} \right\|_{L^2} \\
			&\lesssim (\varepsilon_0 + r^{L/2}) \|\varphi\|_{L^\infty} \left( 1 + \frac{t}{T} (\varepsilon_0 + r^{L/2}) \right).
		\end{align*}
		
		Now, we estimate
		\begin{align*}
			&\left| \int_{\Omega_f} f \varphi(\bar{\T}_t(x)) \left( e^{\int_{0}^{t} \rho_{t'}(\mathscr{T}_{t'}(x))\,dt'} - e^{\int_{0}^{t} \bar{\zeta}_{t'}(\bar{\T}_{t'}(x))\,dt'} \right) dx \right| \\
			&\qquad\le \|f\|_{L^\infty} \|\varphi\|_{L^\infty} \int_{\Omega_f} \left| \frac{1 - \frac{t}{T} k^\ast_1(x)}{1 - \frac{t}{T} k^\ast_2(\nabla\Phi(x))} - \frac{1 - \frac{t}{T} \widetilde{k^\ast_1}(x)}{1 - \frac{t}{T} \widetilde{k^\ast_2}(\nabla\bar{\phi}_n(x))} \right| dx.
		\end{align*}
		
		Since \( k^\ast_1 \) is bounded and we have~\eqref{denomest}, we find
		\begin{align*}
			\int_{\Omega_f} \left| \frac{1 - \frac{t}{T} k^\ast_1(x)}{1 - \frac{t}{T} k^\ast_2(\nabla\Phi(x))} - \frac{1 - \frac{t}{T} k^\ast_1(x)}{1 - \frac{t}{T} \widetilde{k^\ast_2}(\nabla\Phi(x))} \right|dx 
			&\lesssim \frac{t}{T} \int_{\Omega_f} \left| k^\ast_2(\nabla\Phi(x)) - \widetilde{k^\ast_2}(\nabla\Phi(x)) \right| dx \\
			&\lesssim \frac{t}{T} \int_{\Omega_g} \frac{k_x((\nabla\Phi)^{-1})}{k_y(y)} \left| k^\ast_2(y) - \widetilde{k^\ast_2}(y) \right| dy \\
			&\lesssim \frac{t}{T} \|k^\ast_2 - \widetilde{k^\ast_2}\|_{L^2} \lesssim \frac{t}{T} (\varepsilon_0 + r^{L/2}).
		\end{align*}
		
		Since \( \widetilde{k^\ast_2} = \chi_{\Omega_g} \Gamma_{S,\varsigma_0} \ast \bar{k^\ast_2} \) for \( \varsigma_0 \) satisfying~\eqref{Mollk}, we have
		\begin{align*}
			&\int_{\Omega_f} \left| \frac{1 - \frac{t}{T} k^\ast_1(x)}{1 - \frac{t}{T} \widetilde{k^\ast_2}(\nabla\Phi(x))} - \frac{1 - \frac{t}{T} k^\ast_1(x)}{1 - \frac{t}{T} \widetilde{k^\ast_2}(\nabla\bar{\phi}_n(x))} \right|dx \\
			&\qquad\lesssim \frac{t}{T} \int_{\Omega_f} \left| \widetilde{k^\ast_2}(\nabla\Phi(x)) - \widetilde{k^\ast_2}(\nabla\bar{\phi}_n(x)) \right| dx \\
			&\qquad\lesssim \frac{t}{T} \|\nabla\Phi - \nabla\bar{\phi}_n\|_{L^\vartheta} + \frac{t}{T} \int_{\Omega_f} \left| \Gamma_{S,\varsigma_0} \ast \bar{k^\ast_2}(\nabla\bar{\phi}_n) - \widetilde{k^\ast_2}(\nabla\bar{\phi}_n) \right| dx \\
			&\qquad\lesssim \frac{t}{T} \|\nabla\Phi - \nabla\bar{\phi}_n\|_{L^\vartheta} + \frac{t}{T} \varepsilon_0.
		\end{align*}
		
		Finally, we estimate
		\begin{align*}
			\int_{\Omega_f} \left| \frac{1 - \frac{t}{T} k^\ast_1(x)}{1 - \frac{t}{T} \widetilde{k^\ast_2}(\nabla\bar{\phi}_n(x))} - \frac{1 - \frac{t}{T} \widetilde{k^\ast_1}(x)}{1 - \frac{t}{T} \widetilde{k^\ast_2}(\nabla\bar{\phi}_n(x))} \right|dx 
			&\lesssim \frac{t}{T} \int_{\Omega_f} \left| k^\ast_1(x) - \widetilde{k^\ast_1}(x) \right| dx \\
			&\lesssim \frac{t}{T} \|k^\ast_1 - \widetilde{k^\ast_1}\|_{L^2} \lesssim \frac{t}{T} (\varepsilon_0 + r^{L/2}).
		\end{align*}
		
		Therefore, we arrive at the estimate
		\begin{equation*}
			d_{bL}(\widetilde{\nu}_t, \mu_t) \lesssim (\varepsilon_0 + r^{L/2}) \left(1 + \frac{t}{T} (\varepsilon_0 + r^{L/2}) \right) + \frac{t}{T} \|\nabla\Phi - \nabla\bar{\phi}_n\|_{L^\vartheta}.
		\end{equation*}
		
		The total error of the solution is then estimated by
		\begin{equation*}
			d_{bL}(\widetilde{\mu}_t, \mu_t) \lesssim (1 + \varepsilon_0 + r^{L/2}) {\left( \varepsilon_0 + r^{L/2} + \varepsilon_1 \right)} + \|\nabla\Phi - \nabla\bar{\phi}_n\|_{L^\vartheta}.
		\end{equation*}
		
		We take \( \widetilde{f} = \overline{f} \) and define {\( \widetilde{g} \cL = \widetilde{\mu}_T \)} to complete the construction.

		\section{Conclusion}
		We develop a novel framework for approximating unbalanced optimal transport (UOT) in the continuum using neural ODEs, addressing cases where the Benamou-Brenier formulation is unavailable. By generalizing a discrete UOT problem with Pearson divergence and designing a Sinkhorn-inspired algorithm with explicit error estimates, we construct vector fields for Neural ODEs that converge to the true UOT dynamics, advancing the mathematical foundations of computational transport and machine learning. Our framework leverages control theory to design Neural ODEs that approximate UOT dynamics, further advancing the mathematical understanding of neural network training. Future work will involve the numerical validation of the proposed algorithm on benchmark UOT problems, utilizing the explicit error estimates provided herein.

		\appendix
		\section{Proof of Lemma \ref{duallem}}\label{produallem}
		\subsection{Minimizer of (\ref{minL2del})}\label{minL2delpro}
		
		We consider the functional
		\begin{align}
			\mathcal{O}(k) = \int_{\Omega} \mathscr C(x, y) \, k(x, y) \, dxdy + \frac{1}{2} \|k(x, y)\|_{L^2}^2 + \frac{1}{2} F(k_x \mid f) + \frac{1}{2} F(k_y \mid g). \label{cO}
		\end{align}
		It is clear that \( \mathcal{O}(k) \geq 0 \). For the constant function \( k \equiv \delta \) on \( \Omega \), we have
		\[
		\mathcal{O}(k) \leq \delta \max_{\Omega} |\mathscr C| \cdot |\Omega| + \frac{\delta^2 |\Omega|}{2} + 2 \frac{\delta^2 |\Omega_f| |\Omega_g|}{c} (|\Omega_f| + |\Omega_g|) + 2 (\|f\|_{L^1} + \|g\|_{L^1}) < \infty.
		\]
		Hence, the infimum of \( \mathcal{O} \) is finite.
		
		We take a minimizing sequence \( \{k^{[n]}\}_{n \in \mathbb{N}} \subset L^2_{\delta}(\Omega) \) such that
		\[
		\mathcal{O}(k^{[n]}) \to d^{\mathrm{reg}}_{\delta,\mathscr C}(f, g).
		\]
		
		Then \( \|k^{[n]}\|_{L^2} \) is bounded. Because \( L^2(\Omega) \) is reflexive and \( \{k^{[n]}\} \) is bounded, by \cite[Theorem 5.18]{Folland}, there exists a weakly convergent subsequence \( \left\{ k^{[n_j]} \right\}_{j \in \mathbb{N}} \). We denote the weak limit by \( k^{[0]} \in L^2_{\delta}(\Omega) \).

		To show that \( k^{[0]} \) is a minimizer, it suffices to show that \( \mathcal{O}(k) \) is lower semi-continuous. With a slight abuse of notation, we assume \( k^{[n]} \rightharpoonup k^{[0]} \) weakly in \( L^2(\Omega) \). We define
		\[
		\mathscr C_m(x, y) = \inf_{(x_0, y_0) \in \Omega} \left\{ \mathscr C(x_0, y_0) + m \left( \|x - x_0\| + \|y - y_0\| \right) \right\}.
		\]
		It is clear that \( \mathscr C_m \) forms an increasing sequence pointwise and that \( \mathscr C_m(x, y) \le \mathscr C(x, y) \). Since \( \mathscr C \) is lower semi-continuous and bounded from below, for each \( (x, y) \) and \( \varepsilon > 0 \), there exists \( {\kappa}_\varepsilon > 0 \) such that
		\[
		\mathscr C(x_1, y_1) > \mathscr C(x, y) - \varepsilon, \quad \text{whenever } \|x - x_1\| + \|y - y_1\| < {\kappa}_\varepsilon.
		\]
		For \( m > \frac{\mathscr C(x, y) + \varepsilon}{ {\kappa}_\varepsilon} \), consider any \( (x_2, y_2) \) such that
		\[
		\mathscr C_m(x, y) + \varepsilon > \mathscr C(x_2, y_2) + m\left( \|x - x_2\| + \|y - y_2\| \right).
		\]
		Then we must have \( \|x - x_2\| + \|y - y_2\| < {\kappa}_\varepsilon \). Now we estimate:
		\[
		\begin{split}
			\mathscr C_m(x, y) &\ge \mathscr C(x_2, y_2) + m\left( \|x - x_2\| + \|y - y_2\| \right) \\
			&> \mathscr C(x, y) - \varepsilon + m\left( \|x - x_2\| + \|y - y_2\| \right).
		\end{split}
		\]
		If \( x_2 = x \) and \( y_2 = y \), then \( \mathscr C_m(x, y) > \mathscr C(x, y) - \varepsilon \). If \( \|x - x_2\| + \|y - y_2\| > 0 \), then by choosing \( m \) sufficiently large, we get \( \mathscr C_m(x, y) > \mathscr C(x, y) \), which contradicts the fact that \( \mathscr C_m \le \mathscr C \). Hence, we conclude that \( \mathscr C_m(x, y) \to \mathscr C(x, y) \) as \( m \to \infty \).
		
		For \( x, z \in \Omega_f \) and \( y, w \in \Omega_g \), we observe:
		\[
		\begin{split}
			\mathscr C(x_0, y_0) + m \left( \|x - x_0\| + \|y - y_0\| \right) &\le \mathscr C(x_0, y_0) + m \left( \|z - x_0\| + \|w - y_0\| \right) \\
			&\quad + m \left( \|x - z\| + \|y - w\| \right),
		\end{split}
		\]
		which shows a Lipschitz-type control for \( \mathscr C_m \) in terms of translations in the domain.
		
		This leads to $\mathscr C_m(x,y)-\mathscr C_m(z,w)\le m\left( \|x-z\|+\|y-w\| \right)$. By switching $(x,y)$ and $(z,w)$ we get $|\mathscr C_m(x,y)-\mathscr C_m(z,w)|\le m\left( \|x-z\|+\|y-w\| \right)$. Therefore, $\mathscr C_m$ is Lipschitz continous.

		By the properties of the weak topology, the monotonicity of $\mathscr C_m$ in $m$, and the Monotone Convergence Theorem, we have
		\begin{equation*}
			\liminf_{n}\int_{\Omega}\mathscr C k^{[n]}dxdy\ge\liminf_{n\to\infty}\int_{\Omega}\mathscr C_m k^{[n]} dxdy=\int_{\Omega}\mathscr C_m k^{[0]} dxdy\overset{m\to\infty}{\longrightarrow}\int_{\Omega}\mathscr C k^{[0]}dxdy.
		\end{equation*}
		Hence, $\int_{\Omega}\mathscr C k dxdy$ is lower semi-continuous. 
		
		It also follows that the norm \( \|k\|_{L^2} \) is lower semi-continuous under the weak topology. Thus,
		\[
		\liminf_{n} \|k^{[n]}\|_{L^2}^2 = \left( \liminf_{n} \|k^{[n]}\|_{L^2} \right)^2 \ge \|k^{[0]}\|_{L^2}^2.
		\]
		
		Finally, we study the entropic marginal cost. Since \( k^{[n]} \rightharpoonup k^{[0]} \) weakly in \( L^2(\Omega) \), for any \( \psi \in L^2(\Omega_f) \), we have
		\[
		\lim_{n \to \infty} \int_{\Omega_f} \psi \frac{k^{[n]}_x}{\sqrt{f}} \, dx 
		= \lim_{n \to \infty} \int_{\Omega} \frac{\psi}{\sqrt{f}} k^{[n]}_x \, dxdy 
		= \int_{\Omega} \frac{\psi}{\sqrt{f}} k^{[0]}_x \, dxdy 
		= \int_{\Omega_f} \psi \frac{k^{[0]}_x}{\sqrt{f}} \, dx.
		\]
		
		This implies that \( \frac{k^{[n]}_x}{\sqrt{f}} \rightharpoonup \frac{k^{[0]}_x}{\sqrt{f}} \) weakly in \( L^2(\Omega_f) \). Similarly, we also have \( k^{[n]}_x \rightharpoonup k^{[0]}_x \) in \( L^2(\Omega_f) \). Therefore, it follows that
		\[
		\liminf_{n} \left\| \frac{k^{[n]}_x}{\sqrt{f}} \right\|_{L^2}^2 
		= \left( \liminf_{n} \left\| \frac{k^{[n]}_x}{\sqrt{f}} \right\|_{L^2} \right)^2 
		\ge \left\| \frac{k^{[0]}_x}{\sqrt{f}} \right\|_{L^2}^2.
		\]
		
		As a consequence, we obtain
		\[
		\begin{split}
			\liminf_{n} F(k^{[n]}_x \mid f)
			&= \liminf_{n} \int_{\Omega} \left( \frac{(k^{[n]}_x)^2}{f} - 2k^{[n]}_x + f \right) \, dx \\
			&\ge \int_{\Omega} \left( \frac{(k^{[0]}_x)^2}{f} - 2k^{[0]}_x + f \right) \, dx = F(k^{[0]}_x \mid f).
		\end{split}
		\]
		
		The fact that the marginal entropic cost \( F(k_y \mid g) \) is lower semi-continuous can be shown in a similar manner. Therefore, the functional \( \cO \) is lower semi-continuous as it is the sum of lower semi-continuous functions.
		
		\begin{remark}
			We want to be careful with the topology being used here. The statement ``$\int_{\Omega}\mathscr C k dxdy$ is lower semi-continuous'' is similar to a part of \cite[Lemma 4.3]{Villanibookold&new}, and ``$F(k_x|f)$ is lower semi-continuous'' is similar to a part of \cite[Corollary 2.9]{LMS2018}. But, we are working with the weak topology of $L^2$ and the quoted statements are for the weak topology of measures.
		\end{remark}

		\subsection{Proof of (\ref{sum=0})}
		For \( {v\cL\in\cM^{\infty}_{\cL,c}} \), we compute
		\[
		\begin{split}
			F^{\ast}_\theta(u^{\ast}|v) = & \sup_{u \in L^2_\theta(\supp v)} \left\{ \int_{\supp v} u u^{\ast} \, dx - F(u|v) \right\} \\
			= & \sup_{{u \in L^2_\theta(\supp v) }} \left\{ \int_{\supp v} \left[ \frac{-u^2}{v} + u \left( u^{\ast} + 2 \right) - v \right] \, dx \right\} \\
			= & \int_{\supp v} v \left[ \max\left\{ \frac{\theta}{v}, \frac{u^{\ast}}{2} + 1 \right\} \left( u^{\ast} + 2 - \max\left\{ \frac{\theta}{v}, \frac{u^{\ast}}{2} + 1 \right\} \right) - 1 \right] \, dx.
		\end{split}
		\]
		
		Combining with the regularized parameter, we obtain
		\begin{equation}
			\label{Fast}
			\begin{split}
				( \frac{1}{2} F )^{\ast}_\theta(u^{\ast} \mid v) 
				&= \frac{1}{2} F^{\ast}_\theta\left( 2u^{\ast} \,\middle|\, v \right) \\
				&= \frac{1}{2} \int_{\supp v} v \left[ \max\left\{ \frac{\theta}{v}, u^{\ast} + 1 \right\} \left( 2u^{\ast} + 2 - \max\left\{ \frac{\theta}{v}, u^{\ast} + 1 \right\} \right) - 1 \right] \, dx.
			\end{split}
		\end{equation}
		
		The maximum solution \( u_0 \) in the convex conjugate is given by
		\begin{equation*}
			u_0(x) = v \max\left\{ \frac{\theta}{v}, u^{\ast}(x) + 1 \right\}
		\end{equation*}
		for almost everywhere \( x \) in \( \supp v \).

		Next, we compute
		\begin{equation}
			\label{Cast}
			\begin{split}
				\bar{\mathscr C}^{\ast}_{\delta}(k^{\ast})=&\ \sup_{k\in L^2_\delta(\Omega)}\int_{\Omega}\left[ k\left( k^{\ast}-\mathscr C \right)-{\frac{1}{2}} k^2 \right]dxdy\\
				=&\ \eta\int_{\Omega}\max\left\{ \delta,k^{\ast}-\mathscr C \right\}\left( {2}(k^{\ast}-\mathscr C)-\max\left\{ \delta,k^{\ast}-\mathscr C \right\} \right) dxdy.
			\end{split}
		\end{equation}
		The maximum solution \( k_0 \) in the convex conjugate is given by
		\begin{equation*}
			k_0 = \max\left\{ \delta, k^{\ast} - \mathscr C \right\},
		\end{equation*}
		for almost everywhere \( (x, y) \) in \( \Omega \).
		
		From \eqref{mindual}, we immediately get
		\begin{equation*}
			-D^{\rm reg}_{\delta,\mathscr C}(f,g) = \bar{D}^{\rm reg}_{\delta,\mathscr C}(f,g) := \sup_{\substack{k_1^{\ast} \in L^2(\Omega_f) \\ k_2^{\ast} \in L^2(\Omega_g)}} \left\{ -\bar{\mathscr C}^{\ast}_{\delta}(k_1^{\ast}(x) + k_2^{\ast}(y)) - {\frac{1}{2}} F^{\ast}_{\delta|\Omega_g|}(-{2}k_1^{\ast} |f) - {\frac{1}{2}} F^{\ast}_{\delta|\Omega_f|}(-{2}k_2^{\ast} |g) \right\}.
		\end{equation*}
		We want to show that \( \bar{D}^{\rm reg}_{\delta,\mathscr C}(f,g) = d^{\rm reg}_{\delta,\mathscr C}(f,g) \).
		
		We consider the following quantity
		\begin{equation*}
			\begin{split}
				L(k, k_1^{\ast}, k_2^{\ast}) = \int_{\Omega} \mathscr C(x,y) k(x,y) \, dx \, dy + {\frac{1}{2}} \|k\|_{L^2}^2 & - \int_{\Omega} k(x,y) (k_1^{\ast}(x) + k_2^{\ast}(y)) \, dx \, dy \\
				& - {\frac{1}{2}} F^{\ast}_{\delta|\Omega_g|}(-{2}k_1^{\ast} |f) - {\frac{1}{2}} F^{\ast}_{\delta|\Omega_f|}(-{2}k_2^{\ast} |g).
			\end{split}
		\end{equation*}
		
		For a domain \( \mho \subset \mathbb{R}^d \), we define the convex and lower semi-continuous indicator \( I_{L^2_\theta(\mho)} : L^2(\mho) \to \mathbb{R} \cup \left\{ \infty \right\} \) as
		\begin{equation*}
			I_{L^2_\theta(\mho)}(u) := \begin{cases}
				0 & \text{if } u \in L^2_\theta(\mho), \\
				\infty & \text{otherwise.}
			\end{cases}
		\end{equation*}
		By \cite[Corollary 2.9]{LMS2018}, \( F(\cdot|v) \) is convex, and by the proof in {subsection \ref{minL2delpro}}, \( F(\cdot|v) \) is lower semi-continuous. Applying \cite[Proposition 4.1 Chapter 1 Part 1]{EkeTenconvexbook}, we obtain
		\begin{equation*}
			\begin{split}
				I_{L^2_\theta(\supp v)} +{\frac{1}{2}}  F(u|v) &= (I_{L^2_\theta(\supp v)} + {\frac{1}{2}} F(u|v))^{\ast\ast} \\
				&= \sup_{u^{\ast} \in L^2(\supp v)} \left\{ \int_{\supp v} u u^{\ast} \, dx - {\frac{1}{2}} F^{\ast}_{\theta}({2}u^{\ast} |v) \right\}.
			\end{split}
		\end{equation*}
		We substitute \( \theta = \delta|\Omega_g|, \delta|\Omega_f| \), \( u = k_x, k_y \), and \( v = f, g \), respectively, and find
		\begin{equation*}
			d^{\rm reg}_{\delta,\mathscr C}(f,g) = \inf_{k \in L^2_\delta(\Omega)} \sup_{\substack{k^{\ast}_1 \in L^2(\Omega_f) \\ k^{\ast}_2 \in L^2(\Omega_g)}} L(k, k^{\ast}_1, k^{\ast}_2).
		\end{equation*}
		On the other hand, by definition, we have
		\begin{equation*}
			\bar{D}^{\rm reg}_{\delta,\mathscr C}(f,g) = \sup_{\substack{k^{\ast}_1 \in L^2(\Omega_f) \\ k^{\ast}_2 \in L^2(\Omega_g)}} \inf_{k \in L^2_\delta(\Omega)} L(k, k^{\ast}_1, k^{\ast}_2).
		\end{equation*}
		
		We can see that \( d^{\rm reg}_{\delta,\mathscr C}(f,g) \ge \bar{D}^{\rm reg}_{\delta,\mathscr C}(f,g) \). To obtain the desired equality, we apply the Minimax Duality Theorem (see \cite[Theorem 2.4]{LMS2018} for the statement, or \cite[Theorem 3.1]{Simonminimax} for the proof). To apply Minimax Duality Theorem, we need to check that \( L(\cdot, \cdot, \cdot) \) satisfies all the needed conditions.
		
		\begin{enumerate}
			\item For fixed \( k^{\ast}_1 \in L^2(\Omega_f), k^{\ast}_2 \in L^2(\Omega_g) \), the function \( L(\cdot, k^{\ast}_1, k^{\ast}_2) \) is convex and lower semi-continuous under weak topology in \( L^2(\Omega) \). In {Subsection \ref{minL2delpro}}, we already proved that \( \int_{\Omega} \mathscr C k \, dx \, dy + \eta \|k\|_{L^2}^2 \) is lower semi-continuous. We also have \( \int_{\Omega} k (k^{\ast}_1 + k^{\ast}_2) \, dx \, dy \) is continuous under the weak topology. The convexity of \( L(\cdot, k^{\ast}_1, k^{\ast}_2) \) follows from the convexity of \( \|k\|_{L^2}^2 \) and the linearity of \( \int_{\Omega} \mathscr C k \, dx \, dy, \int_{\Omega} k (k^{\ast}_1 + k^{\ast}_2) \, dx \, dy \).
			\item For fixed \( k \in L^2_\delta(\Omega) \), the function \( L(k, \cdot) \) is concave in \( L^2(\Omega_f) \times L^2(\Omega_g) \). This follows from the linearity of \( \int_{\Omega} k (k^{\ast}_1 + k^{\ast}_2) \, dx \, dy \) and the convexity of \( F^{\ast}_{\theta}(\cdot | v) \) (see \cite[Definition 4.1 Chapter 1 Part 1]{EkeTenconvexbook}).
			\item There are \( M > \bar{D}^{\rm reg}_{\delta,\mathscr C}(f,g), k^{\star}_1 \in L^2(\Omega_f), k^{\star}_2 \in L^2(\Omega_g) \) such that
			\begin{equation*}
				\left\{ k \in L^2_\delta(\Omega) \mid L(k, k^{\star}_1, k^{\star}_2) \le M \right\} \text{ is compact under the weak topology of } L^2(\Omega).
			\end{equation*}
			By choosing \( M = d^{\rm reg}_{\delta,\mathscr C}(f,g) + 1 \) and \( k^{\star}_1 = k^{\star}_2 = 0 \), for \( k \in L^2_{\delta}(\Omega) \) and \( L(k, k^{\star}_1, k^{\star}_2) \le M \), we have \( \|k\|_{L^2} \le \sqrt{\frac{M}{\eta}} \). By Kakutani's Theorem, \( \{ k \in L^2_{\delta}(\Omega) \mid L(k, k^{\star}_1, k^{\star}_2) \le M \} \) is a subset of a compact set. The set is also closed as it is a sublevel set of the lower semi-continuous function \( L(\cdot, k^{\star}_1, k^{\star}_2) \). Hence, the set \( \{ k \in L^2_{\delta}(\Omega) \mid L(k, k^{\star}_1, k^{\star}_2) \le M \} \) is compact.
		\end{enumerate}
		
		Thus, we are done proving \( d^{\rm reg}_{\delta,\mathscr C}(f,g) + D^{\rm reg}_{\delta,\mathscr C}(f,g) = 0 \).
		
		\subsection{Equations (\ref{kkstar}) - (\ref{ky})}
		We consider a sequence $\left\{ ( (k^{\ast}_1)^{[n]},(k^{\ast}_2)^{[n]}) \right\}_{n \in \mathbb{N}} \subset L^2(\Omega_f) \times L^2(\Omega_g)$ such that
		\begin{equation*}
			d^{{\rm reg}}_{\delta,\mathscr C}(f,g)+\bar{\mathscr C}^{\ast}_{\delta}((k^{\ast}_1)^{[n]},(k^{\ast}_2)^{[n]})+{\frac{1}{2}} F^{\ast}_{\delta|\Omega_g|}(-{2}(k^{\ast}_1)^{[n]}|f)+{\frac{1}{2}} F^{\ast}_{\delta|\Omega_f|}(-{2}(k^{\ast}_2)^{[n]}|g)\to 0^+,
		\end{equation*}
		as $n \to \infty$.
		Let $k \in L^2_\delta(\Omega)$ be the minimizer of \eqref{minL2del}. We estimate
		\begin{equation*}
			\begin{split}
				&\int_{\Omega}\mathscr C k dxdy+{\frac{1}{2}}\|k\|_{L^2}^2-\int_{\Omega}k( (k^{\ast}_1)^{[n]}+(k^{\ast}_2)^{[n]})dxdy+\bar{\mathscr C}^{\ast}_{\delta}((k^{\ast}_1)^{[n]}+(k^{\ast}_2)^{[n]})\\
				=&\ {\frac{1}{2}}\int_{\Omega}\left( k-\max\left\{ \delta,(k^{\ast}_1)^{[n]}+(k^{\ast}_2)^{[n]}-\mathscr C \right\} \right)\\
				&\times\left( k+\max\left\{ \delta,(k^{\ast}_1)^{[n]}+(k^{\ast}_2)^{[n]}-\mathscr C\right\}-{2}((k^{\ast}_1)^{[n]}+(k^{\ast}_2)^{[n]}-\mathscr C) \right)dxdy\\
				=&\ {\frac{1}{2}}\int_{\Omega}\left( k-\max\left\{ \delta,(k^{\ast}_1)^{[n]}+(k^{\ast}_2)^{[n]}-\mathscr C \right\} \right)^{2}dxdy\\
				&+\int_{\Omega}\left( k-\max\left\{ \delta,(k^{\ast}_1)^{[n]}+(k^{\ast}_2)^{[n]}-\mathscr C \right\} \right)\\
				&\times\left( \max\left\{ \delta,(k^{\ast}_1)^{[n]}+(k^{\ast}_2)^{[n]}-\mathscr C\right\}-((k^{\ast}_1)^{[n]}+(k^{\ast}_2)^{[n]}-\mathscr C) \right)dxdy\\
				\ge&\ {\frac{1}{2}}\left\| k-\max\left\{ \delta,(k^{\ast}_1)^{[n]}+(k^{\ast}_2)^{[n]}-\mathscr C \right\} \right\|_{L^2}^2.
			\end{split}
		\end{equation*}
		
		We obtain the desired inequality simply by considering two cases: 
		1. \( (k^\ast_1)^{[n]} + (k^\ast_2)^{[n]} - \mathscr C \ge \delta \), 
		2. \( (k^\ast_1)^{[n]} + (k^\ast_2)^{[n]} - \mathscr C < \delta \le k \).
		
		Similarly, we have:
		\begin{align*}
				{\frac{1}{2}} F(k_x|f)&+\int_{\Omega_f}k_x (k^{\ast}_1)^{[n]}dx+{\frac{1}{2}} F^{\ast}_{\delta|\Omega_g|}(-{2}(k^{\ast}_1)^{[n]}|f)\\
				=&\ {\frac{1}{2}}\int_{\Omega_f}f\left( \frac{k_x}{f}-\max\left\{ \frac{\delta|\Omega_g|}{f},1-(k^{\ast}_1)^{[n]} \right\} \right)^2dx\\
				&+{\frac{1}{2}}\int_{\Omega_f}f\left( \frac{k_x}{f}-\max\left\{ \frac{\delta|\Omega_g|}{f},1-(k^{\ast}_1)^{[n]} \right\} \right)\\
				&\times\left( \max\left\{ \frac{\delta|\Omega_g|}{f},1-(k^{\ast}_1)^{[n]} \right\}-\left( 1-(k^{\ast}_1)^{[n]} \right)\right)dx\\
				\ge&\ {\frac{c}{2}}\left\|\frac{k_x}{f}-\max\left\{ \frac{\delta|\Omega_g|}{f},1-(k^{\ast}_1)^{[n]} \right\}\right\|_{L^2}^2,
			\end{align*}
		and
		\begin{align*}
				{\frac{1}{2}} F(k_y|g_\varepsilon)&+\int_{\Omega_g}k_y (k^{\ast}_2)^{[n]}dy+{\frac{1}{2}} F^{\ast}_{\delta|\Omega_f|}(-{2}(k^{\ast}_2)^{[n]}|g)\\
				=&\ {\frac{1}{2}}\int_{\Omega_g}g\left( \frac{k_y}{g}-\max\left\{ \frac{\delta|\Omega_f|}{g},1-(k^{\ast}_2)^{[n]} \right\} \right)^2dy\\
				&+{\frac{1}{2}}\int_{\Omega_g}g\left( \frac{k_y}{g}-\max\left\{ \frac{\delta|\Omega_f|}{g},1-(k^{\ast}_2)^{[n]} \right\} \right)\\
				&\times\left( \max\left\{ \frac{\delta|\Omega_f|}{g},1-(k^{\ast}_2)^{[n]} \right\}-\left( 1-(k^{\ast}_2)^{[n]} \right)\right)dy\\
				&\ge {\frac{c}{2}}\left\|\frac{k_y}{g}-\max\left\{ \frac{\delta|\Omega_f|}{g},1-(k^{\ast}_2)^{[n]} \right\}\right\|_{L^2}^2.
			\end{align*}
		
		Thus, we obtain
		\begin{equation*}
			\begin{split}
				\max\left\{ \delta,(k^{\ast}_1)^{[n]}+(k^{\ast}_2)^{[n]}-\mathscr C \right\}& \overset{L^2}{\longrightarrow}k,\\
				\max\left\{ \frac{\delta|\Omega_g|}{f},1-(k^{\ast}_1)^{[n]} \right\}&\overset{L^2}{\longrightarrow}\frac{k_x}{f},\\
				\max\left\{ \frac{\delta|\Omega_f|}{g},1-(k^{\ast}_2)^{[n]} \right\}&\overset{L^2}{\longrightarrow}\frac{k_y}{g}.
			\end{split}
		\end{equation*}
		If a minimizer of \eqref{mindual} exists, then \eqref{kkstar}, \eqref{kx}, and \eqref{ky} will follow immediately.
		
		\subsection{Minimizer of (\ref{mindual})}
		We set
		\begin{equation*}
			(k^{\star}_1)^{[n]}=\min\left\{ (k^{\ast}_1)^{[n]}, {1-\frac{\delta|\Omega_g|}{f}}  \right\},(k^{\star}_2)^{[n]}=\min\left\{ (k^{\ast}_2)^{[n]}, {1-\frac{\delta|\Omega_f|}{g}}  \right\}.
		\end{equation*}
		We obtain
		\begin{equation*}
			1 - (k^{\star}_1)^{[n]} = \max\left\{ \frac{\delta |\Omega_g|}{f}, 1 - (k^{\ast}_1)^{[n]} \right\}, \quad
			1 - (k^{\star}_2)^{[n]} = \max\left\{ \frac{\delta |\Omega_f|}{g}, 1 - (k^{\ast}_2)^{[n]} \right\}.
		\end{equation*}
		
		Using the $L^2$ convergence, $(k^{\star}_1)^{[n]}, (k^{\star}_2)^{[n]}$ are bounded in $L^2$, and both
		\begin{equation*}
			\begin{split}
				\frac{1}{2} F(k_x \mid f) + \int_{\Omega_f} k_x (k_1^{\star})^{[n]} \, dx + \frac{1}{2} F^{\ast}_{\delta |\Omega_g|}(-2(k^{\star}_1)^{[n]} \mid f), \\
				\frac{1}{2} F(k_y \mid g) + \int_{\Omega_g} k_y (k_2^{\star})^{[n]} \, dx + \frac{1}{2} F^{\ast}_{\delta |\Omega_f|}(-2(k^{\star}_2)^{[n]} \mid g)
			\end{split}
		\end{equation*}
		converge to zero as \( n \to +\infty \).
		
		Also, by convergence, for fixed \( \varepsilon > 0 \), there exists \( n(\varepsilon) \) such that
		\[
		\|(k^{\star}_1)^{[n]}\|_{L^2} \le c^{-1} \|k_x\|_{L^2} + 1 + \varepsilon, \quad 
		\|(k^{\star}_2)^{[n]}\|_{L^2} \le c^{-1} \|k_y\|_{L^2} + 1 + \varepsilon,
		\]
		for all \( n \ge n(\varepsilon) \).
		
		For \( x, y \) satisfying \( k(x, y) = \delta \), we have
		\begin{equation}\label{>k1}
			0 \le \max\left\{ \delta, (k^{\star}_1)^{[n]} + (k^{\star}_2)^{[n]} - \mathscr C \right\} - k 
			\le \max\left\{ \delta, (k^{\ast}_1)^{[n]} + (k^{\ast}_2)^{[n]} - \mathscr C \right\} - k,
		\end{equation}
		and
		\begin{equation}\label{>k2}
			\left( k - \max\left\{ \delta, (k^{\star}_1)^{[n]} + (k^{\star}_2)^{[n]} - \mathscr C \right\} \right)
			\left( \max\left\{ \delta, (k^{\star}_1)^{[n]} + (k^{\star}_2)^{[n]} - \mathscr C \right\} - \left( (k^{\star}_1)^{[n]} + (k^{\star}_2)^{[n]} - \mathscr C \right) \right) = 0.
		\end{equation}

		Thus, for the integral
		\[
		\int_{\Omega} \mathscr Ck + {\frac{1}{2}} \|k\|^2 
		- \int_{\Omega} k\left((k_1^\star)^{[n]} + (k_2^\star)^{[n]}\right) 
		+ \bar{\mathscr C}^\ast_{ \delta}\left((k_1^\ast)^{[n]} + (k_2^\ast)^{[n]}\right),
		\]
		it suffices to consider the case where \( k(x, y) > \delta \). We define
		\[
		\Omega^k = \{ (x, y) \in \Omega \, : \, k(x, y) > \delta \}.
		\]
		
		By Egoroff's Theorem, and passing through a subsequence if necessary (still use $(k^\ast_1)^{[n]}$ for convenience), for \( \lambda_1 \in \mathbb{Z}^+ \), there exists a measurable set \( \mathcal{N}^{\lambda_1}_1 \subset \pi_x(\Omega^k) \) such that \( |\mathcal{N}^{\lambda_1}_1| < \frac{1}{\lambda_1} \) and
		\begin{equation*}
			\max\left\{ \frac{\delta|\Omega_g|}{f}, 1 - (k^{\ast}_1)^{[n]} \right\} 
			\longrightarrow \frac{k_x}{f} \quad \text{uniformly on } \pi_x(\Omega^k) \setminus \mathcal{N}^{\lambda_1}_1.
		\end{equation*}
		
		If \( k_x(x) \geq \delta |\Omega_g| + \frac{1}{\lambda_0} \) for some \( \lambda_0 > 0 \), then for sufficiently large \( n \),
		\begin{equation*}
			1 - (k^{\ast}_1)^{[n]} \geq \frac{\delta |\Omega_g|}{f} + \frac{1}{2 f \lambda_0} \quad \text{on } \pi_x(\Omega^k) \setminus \mathcal{N}^{\lambda_1}_1.
		\end{equation*}

		Hence, there exists \( n_{\lambda_1} \) such that \( (k^{\star}_1)^{[n]} = (k^{\ast}_1)^{[n]} \) for \( n \geq n_{\lambda_1} \) on \( \pi_x(\Omega^k) \setminus \mathcal{N}^{\lambda_1}_1 \).
		
		Similarly, there exists \( m_{\lambda_2} \) such that \( (k^{\star}_2)^{[n]} = (k^{\ast}_2)^{[n]} \) for \( n \geq m_{\lambda_2} \) on \( \pi_y(\Omega^k) \setminus \mathcal{N}^{\lambda_2}_2 \) for \( \lambda_2 \in \mathbb{Z}^+ \), and a measurable set \( \mathcal{N}^{\lambda_2}_2 \subset \pi_y(\Omega^k) \) such that \( |\mathcal{N}^{\lambda_2}_2| < \frac{1}{\lambda_2} \).
		
		We note that in \cite[Theorem 2.33]{Folland}, the set \( \mathcal{N}^{\lambda} \) can be taken such that as \( \lambda \) increases, \( \mathcal{N}^{\lambda} \) decreases in the inclusion sense.

		We now study \( \mathcal{N} = \{ (x, y) \in \Omega^k : x \in \mathcal{N}^{\lambda_1}_1 \text{ or } y \in \mathcal{N}^{\lambda_2}_2 \text{ or } k_x(x) < \delta|\Omega_g| + \frac{1}{\lambda_0} \text{ or } k_y(y) < \delta|\Omega_f| + \frac{1}{\lambda_0} \} \). For \( (x, y) \in \mathcal{N} \) with \( (k^{\star}_1)^{[n]} + (k^{\star}_2)^{[n]} - \mathscr C \geq k \), we return to \eqref{>k1} and \eqref{>k2}. For \( (x, y) \in \mathcal{N} \) and \( (k^{\star}_1)^{[n]} + (k^{\star}_2)^{[n]} - \mathscr C < k \), we have
		\begin{equation*}
			\begin{split}
				&\left( k - \max\left\{ \delta, (k^{\star}_1)^{[n]} + (k^{\star}_2)^{[n]} - \mathscr C \right\} \right) \\
				&\quad \times \left( k + \max\left\{ \delta, (k^{\star}_1)^{[n]} + (k^{\star}_2)^{[n]} - \mathscr C \right\} - \left( (k^{\star}_1)^{[n]} + (k^{\star}_2)^{[n]} - \mathscr C \right) \right) \\
				&\le 2k \left( k - \left( (k^{\star}_1)^{[n]} + (k^{\star}_2)^{[n]} - \mathscr C \right) \right).
			\end{split}
		\end{equation*}
		
		By the Cauchy–Schwarz inequality, the integration over \( \mathcal{N} \) is bounded by
		\begin{equation*}
			\begin{split}
				\int_{\mathcal{N}} &\left( k - \max\left\{ \delta, (k^{\ast}_1)^{[n]} + (k^{\ast}_2)^{[n]} - \mathscr C \right\} \right)^2 \\
				&+ 4 \sqrt{\int_{\mathcal{N}} k^2 \left( \|k\|_{L^2}^2 + 2 \left( c^{-1} \max \left\{ \|k_x\|_{L^2}, \|k_y\|_{L^2} \right\} + 1 + M \right)^2 + \left( \|\mathscr C\|_{L^\infty} |\Omega| \right)^2 \right)} \\
				= & \int_{\mathcal{N}} \left( k - \max\left\{ \delta, (k^{\ast}_1)^{[n]} + (k^{\ast}_2)^{[n]} - \mathscr C \right\} \right)^2 + 4 \sqrt{\int_{\mathcal{N}} k^2 \mathcal{K}},
			\end{split}
		\end{equation*}
		where 
		\[
		\mathcal{K} := \|k\|_{L^2}^2 + 2 \left( c^{-1} \max \left\{ \|k_x\|_{L^2}, \|k_y\|_{L^2} \right\} + 1 + M \right)^2 + \left( \|\mathscr C\|_{L^\infty} |\Omega| \right)^2.
		\]
		
		Summing the cases, we obtain
		\begin{align*}
			\int_{\Omega} \mathscr C k \, dx dy + \frac{1}{2} \|k\|_{L^2}^2 &- \int_{\Omega} k \big( (k^{\star}_1)^{[n]} + (k^{\star}_2)^{[n]} \big) \, dx dy + \bar{\mathscr C}^\ast_{\delta} \big( (k^{\star}_1)^{[n]} + (k^{\star}_2)^{[n]} \big) \\
			\le\; & \int_{\Omega} \mathscr C k \, dx dy + \frac{1}{2} \|k\|_{L^2}^2 - \int_{\Omega} k \big( (k^{\ast}_1)^{[n]} + (k^{\ast}_2)^{[n]} \big) \, dx dy + \bar{\mathscr C}^\ast_{\delta} \big( (k^{\ast}_1)^{[n]} + (k^{\ast}_2)^{[n]} \big) \\
			&+ 4 \sqrt{\int_{\mathcal{N}} k^2 \mathcal{K}}
		\end{align*}
		for \( n \geq \max \left\{ n(\varepsilon), n_{\lambda_1}, m_{\lambda_2} \right\}. \)
		
		As we increase \( \lambda_0, \lambda_1, \lambda_2 \), the quantity \( \mathcal{N} \) decreases in the inclusion sense, and \( |\mathcal{N}| \to 0^+ \). By the Monotone Convergence Theorem, \( \int_{\mathcal{N}} k^2 \to 0^+ \), which implies \( 4 \sqrt{\int_{\mathcal{N}} k^2 \mathcal{K}} \to 0^+ \). Thus, by choosing \( \lambda_0, \lambda_1, \lambda_2 \) sufficiently large and \( n \) also sufficiently large, we get
		\begin{equation*}
			\int_{\Omega} C k \, dx \, dy + {\frac{1}{2}} \|k\|^2_{L^2} - \int_{\Omega} k \left( (k^{\star}_1)^{[n]} + (k^{\star}_2)^{[n]} \right) \, dx \, dy + \bar{\mathscr C}^{\ast}_{ \delta} \left( (k^{\star}_1)^{[n]} + (k^{\star}_2)^{[n]} \right) \to 0^+.
		\end{equation*}
		This means that \( \left( (k^{\star}_1)^{[n]}, (k^{\star}_2)^{[n]} \right) \) is a bounded sequence minimizing \eqref{mindual}. By \cite[Theorem 18]{Folland}, \( \left( (k^{\star}_1)^{[n]}, (k^{\star}_2)^{[n]} \right) \) has a convergent subsequence in the weak topology of \( L^2 \). Since \( \bar{\mathscr C}^{\ast}_{ \delta}, F^{\ast}_{\theta} \) are lower semi-continuous in the weak topology (see \cite[Definition 4.1 Chapter 1 Part 1]{EkeTenconvexbook}), the problem \eqref{mindual} admits a minimizer.

		To finish the proof, we check that \( |\mathcal{N}| \), in fact, converges to 0 as \( \lambda_0, \lambda_1, \lambda_2 \to \infty \). We have
		\begin{equation*}
			\begin{split}    
				\mathcal{N} \subset & \left\{ (x,y) \in \Omega^k : x \in \mathcal{N}^{\lambda_1}_1 \right\} \cup \left\{ (x,y) \in \Omega^k : y \in \mathcal{N}^{\lambda_2}_2 \right\} \\
				& \cup \left\{ (x,y) \in \Omega^k : k_x(x) < \delta|\Omega_g| + \frac{1}{\lambda_0} \right\} \cup \left\{ (x,y) \in \Omega^k : k_y(y) < \delta|\Omega_f| + \frac{1}{\lambda_0} \right\}.
			\end{split}
		\end{equation*}
		For the first and second sets in the union, we estimate
		\begin{equation*}
			\begin{split}
				& \left|\left\{ (x,y) \in \Omega^k : x \in \mathcal{N}^{\lambda_1}_1 \right\}\right| \le |\mathcal{N}^{\lambda_1}_1| |\Omega_g| \to 0^+, \\
				& \left|\left\{ (x,y) \in \Omega^k : y \in \mathcal{N}^{\lambda_2}_2 \right\}\right| \le |\mathcal{N}^{\lambda_2}_2| |\Omega_f| \to 0^+.
			\end{split}
		\end{equation*}
		For the third one, we notice that
		\begin{equation*}
			\left\{ (x,y) \in \Omega^k : k_x(x) = \delta|\Omega_g| \right\} = \bigcap_{\lambda_0} \left\{ (x,y) \in \Omega^k : k_x(x) < \delta|\Omega_g| + \frac{1}{\lambda_0} \right\}.
		\end{equation*}
		For \( k_x(x) = \delta|\Omega_g| \), we have \( k(x,y) = \delta \) for almost every \( y \in \Omega_g \). This implies the intersection is of measure 0. By \cite[Theorem 1.8]{Folland}, we obtain
		\begin{equation*}
			\left|\left\{ (x,y) \in \Omega^k : k_x(x) < \delta|\Omega_g| + \frac{1}{\lambda_0} \right\}\right| \to 0^+.
		\end{equation*}
		The fourth set can be similarly estimated.
		
		\section{Additional remarks}\label{moreremarks}
		
			\subsection{Sinkhorn-inspired algorithm remarks}
			The Sinkhorn-inspired algorithm for Problem \eqref{minL2del} described in Section \ref{AL} carries over to any $\eta>0$ in Problem \eqref{minL2deleta} with only minor modification. The changes are listed below.
			\begin{enumerate}
				\item Equation \eqref{kkstar} becomes
					\begin{align*}
						k(x,y)=\max\Bigg\{\delta,\frac{k_1^\ast(x)+k_2^\ast(y)-\mathscr C(x,y)}{2\eta}\Bigg\}.
					\end{align*}
				\item The parameter $\alpha$ in \eqref{alpha} is replaced by
					\begin{align*}
						\alpha_\eta:=\sqrt{\frac{1}{2}\max\left\{ \frac{|\Omega_f|}{\eta},\frac{|\Omega_g|}{\eta} \right\}^2+2\big(E-\frac{c}{2}\big)^2}.
					\end{align*}

				\item The parameters $q,s,r$ in $\eqref{qsr}$ are defined based on $\alpha_\eta$ instead of $\alpha$.
				\item $X^{n+1}$ will be updated with
					\begin{align*}
						X^{n+1}(x)=\frac{1}{2\eta}\int_{\Omega_g} \max \left\{ \delta, X^n_\ast(x) + Y^n_\ast(y) - \mathscr C(x,y) \right\} \, dy + \bigl( X^n_\ast(x) - 1 \bigr) f(x).
					\end{align*}
				\item $Y^{n+1}$ will be updated with
					\begin{align*}
						Y^{n+1}(x)=\frac{1}{2\eta}\int_{\Omega_f} \max \left\{ \delta, X^n_\ast(x) + Y^n_\ast(y) - \mathscr C(x,y) \right\} \, dx + \bigl( Y^n_\ast(y) - 1 \bigr) g(y).
					\end{align*}
					\item The updates on $X^{n+1}_0,X^{n+1}_\ast,Y^{n+1}_0,Y^{n+1}_\ast$ stay the same.
			\end{enumerate}
		The convergence proof follows the same steps as for the case $\eta=\frac{1}{2}$.

			\subsection{Approximation remarks}

			In this section, we explain how \eqref{minL2deleta} can be used to approximate the standard transport problem \eqref{min}. The idea of the approximation is as follows:
			\begin{enumerate}
				\item Problem \eqref{min} can be restricted to $\gamma\in\cM(\Omega)$ with density $k\in L^2_{+}(\Omega)$ in Lemma \ref{simtoL2}.
				\item We then approximate \eqref{min} by adding $\eta\|k\|_{L^2}^2$ as in \eqref{minL2} with small $\eta$.
				\item For each $\eta$, we approximate \eqref{minL2} by \eqref{minL2deleta} with error $O(\delta)(1+\eta+d_{\mathscr C}^{\rm reg(\eta)}(f,g))$.
			\end{enumerate}
		
			Now, let us recall the standard optimal transport problem with only cost function and entropic marginals. The problem of optimally transporting \(\mu_0=f\cL\) to \(\mu_T=g\cL\) is the minimization problem of the total transport cost Problem \eqref{min}, that is
	\begin{equation*}
		d_{\mathscr C}(f, g) := \inf_{\gamma \in \mathcal{M}(\Omega)} \left\{ \int_{\Omega} \mathscr C(x, y) \, d\gamma(x, y) + \frac{1}{2} F(\gamma_x \mid f \mathcal{L}) + \frac{1}{2} F(\gamma_y \mid g \mathcal{L}) \right\}.
	\end{equation*}

If the measure \(\gamma\) is not absolutely continuous with respect to the Lebesgue measure, then either \(F(\gamma_x \mid f\mathcal{L})\) or \(F(\gamma_y \mid g\mathcal{L})\) becomes infinite. In our setting, \(\gamma\) must be absolutely continuous with respect to the Lebesgue measure, and the supports of \(\gamma_x\) and \(\gamma_y\) must be \(\Omega_f\) and \(\Omega_g\), respectively.
Therefore, it suffices to consider $\gamma=k\cL$ for $k\in L^1(\Omega)$.
	}

	We define the space
	\[
	L^2_+(\Omega) := \{ k \in L^2(\Omega) \mid k(x, y) \geq 0 \text{ a.e. on } \Omega \}.
	\]

	\begin{lemma}  For \( f\mathcal{L},\, g\mathcal{L} \in \mathcal{M}^{\infty}_{\mathcal{L},c} \), the  optimal transport cost is given by
		\[
			d_{\mathscr C}(f, g) = \inf_{k \in L^2_+(\Omega)} \left\{ \int_{\Omega} \mathscr C(x, y)k(x, y) \, dxdy + \frac12 F(k_x \mid f) + \frac12 F(k_y \mid g) \right\}.
		\]
		
		\label{simtoL2}
	\end{lemma}
	\begin{proof}
		For \( k \ge 0 \), \( k \in L^1(\Omega) \), and \( \kappa > 0 \), we consider the truncated function
		\[
		k^{\kappa}(x,y) = \min\{k(x,y), \kappa\}.
		\]
		The function \( k^{\kappa} \) is non-negative and bounded above by \( \kappa \). Since \( \Omega \) is compact, it follows that \( k^{\kappa} \in L^2_+(\Omega) \).
		
		We observe that
		\[
		\int_{\Omega} \mathscr C(x, y) k^{\kappa}(x, y) \, dxdy - \int_{\Omega} \mathscr C(x, y) k(x, y) \, dxdy \le 0,
		\]
		since truncation decreases or preserves the integrand.
		
		Moreover, for the relative entropy term, we compute
		\[
		\begin{split}
			F((k^{\kappa})_x \mid f) - F(k_x \mid f)
			&= \int_{\Omega_f} \left((k^{\kappa})_x - k_x\right) \left( \frac{(k^{\kappa})_x + k_x}{f} - 2 \right) dx \\
			&\le \int_{\substack{\Omega_f \\ 2(k^{\kappa})_x < (k^{\kappa})_x + k_x < 2f}} \left((k^{\kappa})_x - k_x\right) \left( \frac{(k^{\kappa})_x + k_x}{f} - 2 \right) dx \\
			&\le \int_{\Omega_f} 2(k_x - (k^{\kappa})_x) \, dx,
		\end{split}
		\]
		where the last inequality uses the fact that \( \left| \frac{(k^{\kappa})_x + k_x}{f} - 2 \right| \le 2 \) under the stated condition, and \( k^{\kappa} \le k \).
		
		By the Monotone Convergence Theorem, we have
		\[
		\int_{\Omega_f} 2(k_x - (k^{\kappa})_x) \, dx \to 0 \quad \text{as } \kappa \to +\infty.
		\]
		A similar estimate holds for \( F((k^{\kappa})_y \mid g) - F(k_y \mid g) \). Hence, the infimum in \eqref{min} can be approximated by  \( k \in L^2_+(\Omega) \).
	\end{proof}

		The dynamic problem corresponding to the discrete problem in \cite{Nguyen2022OnUO} is 
		\begin{equation} \label{minL2}
			d_{\mathscr C}^{\mathrm{reg}(\eta)}(f, g) := \inf_{k(x, y) \in L^2_+(\Omega)} \left\{ \int_{\Omega} \mathscr C(x, y) k(x, y) \, dx \, dy + \eta \|k\|_{L^2}^2 + \frac{1}{2} F(k_x \mid f) + \frac{1}{2} F(k_y \mid g) \right\},
		\end{equation}
		where \( f\mathcal{L}, g\mathcal{L} \in \mathcal{M}^\infty_{\mathcal{L}, c} \), and \( \eta > 0 \).

			The following lemma shows that \eqref{minL2}  converges  to \eqref{min} as $\eta\to0^+$.
		
		\begin{lemma}\label{etato0}
			Let \( f\cL, g\cL \in \mathcal{M}^{\infty}_{\mathcal{L},c} \), and let \( \varepsilon > 0 \). Then there exists \( \eta_{\varepsilon}^{f,g} > 0 \) such that
			\[
				\left| d^{\mathrm{reg}(\eta)}_{\mathscr C}(f,g) - d_{\mathscr C}(f,g) \right| < \varepsilon, \quad \forall\, 0 < \eta < \eta_{\varepsilon}^{f,g}.
			\]
			In other words, the value \( d^{\mathrm{reg}(\eta)}_{\mathscr C}(f,g) \) is continuous with respect to \( \eta \) at \( \eta = 0 \).
		\end{lemma}

		\begin{proof}
			Without loss of generality, we omit \( f, g \) in our proof. There exists \( k^{[{\varepsilon}]} \in L^2_+(\Omega) \) such that
			\[
				d_{\mathscr C} + {\frac{\varepsilon}{2}} > \int_{\Omega} \mathscr C(x, y) \, k^{[{\varepsilon}]}(x, y) \, dxdy + \tau F(k^{[{\varepsilon}]}_x \mid f) + \tau F(k^{[{\varepsilon}]}_y \mid g).
			\]
			
			For \( \eta < {\frac{\varepsilon}{2 \|k^{[\varepsilon]}\|_{L^2}^2}} \) (with the convention that \( 1 / \|k^{[{\varepsilon}]}\|_{L^2}^2 = +\infty \) if \( \|k^{[{\varepsilon}]}\|_{L^2} = 0 \)), we have
			\[
				d_{\mathscr C} + {\varepsilon} > \int_{\Omega} \mathscr C(x, y) \, k^{[{\varepsilon}]}(x, y) \, dxdy + \eta \|k^{[{\varepsilon}]}\|_{L^2}^2 + \tau F(k^{[{\varepsilon}]}_x \mid f) + \tau F(k^{[{\varepsilon}]}_y \mid g) \ge d^{ {\rm reg}(\eta)}_{\mathscr C}.
			\]
			
			Note that for any \( \eta > 0 \),
			\[
			\begin{split}
				\int_{\Omega} \mathscr C(x, y) \, k(x, y) \, dxdy &+ \eta \|k\|_{L^2}^2 + \tau F(k_x \mid f) + \tau F(k_y \mid g) \\
				&\ge \int_{\Omega} \mathscr C(x, y) \, k(x, y) \, dxdy + \tau F(k_x \mid f) + \tau F(k_y \mid g) \\
				&\ge d_{\mathscr C}, \qquad \forall k \in L^2_+(\Omega).
			\end{split}
			\]
			Thus, \( d^{ {\rm reg}(\eta)}_{\mathscr C} \ge d_{\mathscr C} \). Therefore, when we take \( \eta_{\varepsilon} = {\frac{\varepsilon}{2 \|k^{[\varepsilon]}\|_{L^2}^2}} \), we obtain the result of the lemma.
		\end{proof}
		The following lemma states that \( d^{\mathrm{reg}(\eta)}_{\delta, \mathscr C}(f, g) \) converges to \( d^{\mathrm{reg}(\eta)}_{\mathscr C}(f, g) \) as \( \delta \to 0 \)  for any \( \eta > 0 \).

		\begin{lemma}\label{appdellem}For \( f\cL, g\cL \in \cM^{\infty}_{\cL,c} \) and \( \eta, \delta > 0 \), we have the following relation:
			\[
				\left| d^{{\rm reg}(\eta)}_{\delta, \mathscr C}(f,g) - d^{ {{\rm reg}}(\eta)}_{\mathscr C}(f,g) \right| \le O(\delta)(1+\eta+d_{\mathscr C}^{\rm reg(\eta)}(f,g)),
			\]
			where \( O(\delta) \) denotes a term that tends to zero as \( \delta \to 0^+ \).
			
		\end{lemma}
		\begin{proof}
			The proof of the claim that Problem \eqref{minL2} admits a minimizer is identical to the proof in Section \ref{minL2delpro}, with \( L^{2}_+ \) replacing \( L^{2}_\delta \). We also use the notation \( \mathcal{O} \) as defined in \eqref{cO}.

			Let \( k \in L^2_+(\Omega) \) be the minimizer of \eqref{minL2}. For \( \delta > 0 \), we consider the regularized function
			\[
			k_{(\delta)}(x,y) =
			\begin{cases}
				\max\{k(x,y), \delta\}, & \text{if } (x,y) \in \Omega, \\
				0, & \text{otherwise}.
			\end{cases}
			\]
			
			Since \( C \) is bounded on \( \Omega \), we estimate the cost difference:
			\[
			0 \le \int_{\Omega} \mathscr C k_{(\delta)} \, dxdy - \int_{\Omega} \mathscr Ck \, dxdy
			\le (\sup_{(x,y)\in \Omega} \mathscr C)\delta |\Omega| = O(\delta).
			\]
			
			Next, we estimate the difference of the \( L^2 \)-norm:
			\[
			\begin{split}
				\eta\left| \|k_{(\delta)}\|_{L^2}^2 - \|k\|_{L^2}^2 \right|
				&\le \eta\|k_{(\delta)} - k\|_{L^2} \left( \|k_{(\delta)} - k\|_{L^2} + 2\|k\|_{L^2} \right) \\
				&\le \delta |\Omega|^{1/2} \left( \delta |\Omega|^{1/2}\eta + 2\eta \|k\|_{L^2} \right) \le O(\delta)(O(\delta)\eta+\sqrt{\eta d_{\mathscr C}^{\rm reg(1/2)}(f,g)}).
			\end{split}
			\]

			Finally, we estimate the difference in the entropic marginal cost:
			{\[
				\left| F((k_{(\delta)})_x \mid f) - F(k_x \mid f) \right|
				= \left| \int_{\Omega_f} \left( (k_{(\delta)})_x - k_x \right) \left( \frac{1}{f} - 2 \right) dx \right|
				\le \delta |\Omega| \left( 2 + \frac{1}{c} \right) = O(\delta).
				\]}
			
				Summing the differences above, we conclude that the perturbed functional value \( \cO(k_{(\delta)}) \) differs from \( \cO(k) \) by at most \( O(\delta)(1+O(\delta)\eta+\sqrt{\eta d_{\mathscr C}^{\rm reg(1/2)}(f,g)}) \), and the result of the lemma follows.
		\end{proof}
		
		\bibliographystyle{plain}
		
		\bibliography{references}

	\end{document}